\title{Finite height lamination spaces for surfaces}
\author{Ulrich Oertel}
\date{April, 2014}
\newenvironment{tightenum}{
\begin{enumerate}[(i)]
  \setlength{\itemsep}{1pt}
  \setlength{\parskip}{0pt}
  \setlength{\parsep}{0pt}
}{\end{enumerate}}
\newtheorem{thm}{Theorem}[section] \newtheorem{lemma}[thm]{Lemma}
\newtheorem{corollary}[thm]{Corollary}
\newtheorem{proposition}[thm]{Proposition}
\newtheorem{question}[thm]{Question} \newtheorem*{claim*}{Claim}
 \theoremstyle{definition}
\newtheorem{defn}[thm]{Definition}
\newtheorem{defns}[thm]{Definitions} 
 \newtheorem{ex}[thm]{Example}
\newtheorem{remarks}[thm]{Remarks} 
\newtheorem{remark}[thm]{Remark}
\theoremstyle{remark}
\begin{document}

\maketitle


\def\HDS{half-disk sum}

\def\Length{\text{Length}}

\def\Area{\text{Area}}
\def\Im{\text{Im}}
\def\im{\text{Im}}
\def\cl{\text{cl}}
\def\rel{\text{ rel }}
\def\irred{irreducible}
\def\half{spinal pair }
\def\spinal{\half}
\def\spinals{\halfs}
\def\halfs{spinal pairs }
\def\reals{\mathbb R}
\def\rationals{\mathbb Q}
\def\complex{\mathbb C}
\def\naturals{\mathbb N}
\def\integers{\mathbb Z}
\def\id{\text{id}}
\def\Chi{\raise1.5pt \hbox{$\chi$}}

\def\proj{P}
\def\hyp {\hbox {\rm {H \kern -2.8ex I}\kern 1.15ex}}

\def\Diff{\text{Diff}}

\def\weight#1#2#3{{#1}\raise2.5pt\hbox{$\centerdot$}\left({#2},{#3}\right)}
\def\intr{{\rm int}}
\def\inter{\ \raise4pt\hbox{$^\circ$}\kern -1.6ex}
\def\Cal{\cal}
\def\from{:}
\def\inverse{^{-1}}
\def\Max{{\rm Max}}
\def\Min{{\rm Min}}
\def\fr{{\rm fr}}
\def\embed{\hookrightarrow}
\def\Genus{{\rm Genus}}
\def\Z{Z}
\def\X{X}

\def\roster{\begin{enumerate}}
\def\endroster{\end{enumerate}}
\def\intersect{\cap}
\def\definition{\begin{defn}}
\def\enddefinition{\end{defn}}
\def\subhead{\subsection\{}
\def\theorem{thm}
\def\endsubhead{\}}
\def\head{\section\{}
\def\endhead{\}}
\def\example{\begin{ex}}
\def\endexample{\end{ex}}
\def\ves{\vs}
\def\mZ{{\mathbb Z}}
\def\M{M(\Phi)}
\def\bdry{\partial}
\def\hop{\vskip 0.15in}
\def\hip{\vskip0.1in}
\def\mathring{\inter}
\def\trip{\vskip 0.09in}
\def\PML{\mathscr{PML}}
\def\J{\mathscr{J}}
\def\G{\mathscr{G}}
\def\H{\mathscr{H}}
\def\C{\mathscr{C}}
\def\S{\mathscr{S}}
\def\T{\mathscr{T}}
\def\E{\mathscr{E}}
\def\K{\mathscr{K}}
\def\L{\mathscr{L}}
\def\suchthat{|}
\newcommand\invlimit{\varprojlim}
\newcommand\congruent{\equiv}
\newcommand\modulo[1]{\pmod{#1}}
\def\ML{\mathscr{ML}}
\def\Stack{\mathscr{T}}
\def\M{\mathscr{M}}
\def\A{\mathscr{A}}
\def\union{\cup}
\def\atlas{\mathscr{A}}
\def\interior{\text{Int}}
\def\frontier{\text{Fr}}
\def\composed{\circ}
\def\GL{\mathscr{GL}}
\def\PC{\mathscr{PC}}
\def\PM{\mathscr{PM}}
\def\D{\mathscr{D}}
\def\Proj{\mathscr{P}}
\def\PW{\mathscr{PW}}
\def\F{\mathscr{F}}
\def\FH{\mathscr{FH}}
\def\FM{\mathscr{FM}}
\def\PFM{\mathscr{PFM}}
\def\PF{\mathscr{PF}}
\def\FD{\mathscr{FD}}
\def\FDM{\mathscr{FDM}}
\def\FDW{\mathscr{FDW}}
\def\PFDW{\mathscr{PFDW}}
\def\PFD{\mathscr{PFD}}
\def\PIFD{\mathscr{PIFD}}
\def\PQW{\mathscr{PQW}}
\def\PW{\mathscr{PW}}
\def\WC{\mathscr{WC}}
\def\W{\mathscr{W}}
\def\WM{\mathscr{WM}}
\def\PWM{\mathscr{PWM}}
\def\PFDM{\mathscr{PFDM}}
\def\D{\mathscr{D}}
\def\B{\mathscr{B}}
\def\I{\mathscr{I}}
\def\level{\mathfrak{L}}
\def\real{\mathfrak{R}}
\def\FDL{\mathscr{FDL}}
\def\FDM{\mathscr{FDM}}
\def\PFDL{\mathscr{PFDL}}
\def\FDLM{\mathscr{FDLM}}
\def\PFDLM{\mathscr{PFDLM}}
\def\CFD{\mathscr{QFD}}
\def\FC{\mathscr{FC}}
\def\PFC{\mathscr{PFC}}
\def\FCM{\mathscr{FCM}}
\def\PFCM{\mathscr{PFCM}}
\def\PFC{\mathscr{PFC}}
\def\FT{\mathscr{FT}}
\def\FCT{\mathscr{FCT}}
\def\PFDT{\mathscr{PFDT}}
\def\PFCDT{\mathscr{PCFDT}}
\def\PFCD{\mathscr{PCFD}}
\def\PVC{\mathscr{PVC}}
\def\VC{\mathscr{VC}}

\def\WC{\mathscr{WC}}
\def\PI{\mathscr{PI}}
\def\PQI{\mathscr{PQI}}
\def\VM{\mathscr{VM}}
\def\PVM{\mathscr{PVM}}
\def\DG{\breve{\mathscr{G}}}
\def\SG{\mathfrak{S}}
\def\V{\mathscr{V}}
\def\LV{\mathscr{LV}}
\def\PV{\mathscr{PV}}
\def\QV{\mathscr{QV}}
\def\PQV{\mathscr{PQV}}
\def\abs{\odot}
\def\DS{\breve S}
\def\DL{\breve L}
\def\DBL{\breve{\bar L}}
\def\II{[0,\infty]}
\def\equiv{\hskip -3pt \sim}
\def\nat{{\mathbb{N}_0}}
\def\Infty{\hbox{$\infty$\kern -8.1pt\raisebox{0.2pt}{--}\kern 1pt}}

\def\split{\prec}
\def\pinch{\succ}
\def\FB{\mathbb F}
\def\SB{\mathbb S}
\def\BP{\mathbb P}
\def\SBB{{\mathbb S}\kern -6pt\raisebox{1.3pt}{--} \kern 2pt}

\centerline{\bf PRELIMINARY}
\vskip 0.3in
\begin{abstract} We describe spaces of
essential finite height (measured) laminations in a surface $S$ using a parameter space we call $\SB$, an ordered semi-ring.  We show that for every finite height essential lamination $L$ in $S$, there is an action of $\pi_1(S)$ on an $\SB$-tree dual to the lift of $L$ to the universal cover of $S$.  
 \end{abstract}

\section{Introduction}

In a previous paper \cite{UO:Pair} we defined ``finite depth essential measured laminations" in surfaces, and prepared the way for this paper.  In this paper, we will eventually change terminology slightly, we will
to describe a suitably projectivized space of ``finite height measured laminations," contained in a larger projectivized space of  ``finite height essential laminations."

\begin{defn}\label{FiniteDepthLam}  A {\it finite depth essential measured lamination} in a closed surface $F$ is an isotopy class of laminations of the form $\displaystyle L=\bigcup_{j=0}^dL_j$ where $(L_0,L_1,\ldots, L_d)$  is finite sequence of measured laminations with each $L_i$ embedded in $\displaystyle\hat F_i= F\setminus \bigcup_{j<i}L_j$ with transverse measure $\mu_i$ and without leaves isotopic to leaves of $\bigcup_{j<i}L_j$,  and such that $\displaystyle \bigcup_{j\le i}L_j$ is an essential lamination in $F$ for each $i\le d$.     \end{defn}

To understand these laminations, we can begin by understanding the {\it depth $i$ lamination} $L_i$.  This lamination $L_i$ is embedded in a possibly non-compact surface $F_i$ with infinite boundary cusps, namely the completion of the surface 
$\displaystyle \hat F_i =F\setminus \bigcup_{j<i}L_j$.  We show a typical component of $F_i$ (relabeled $S$) in Figure \ref{DepthSurfPair}(a), a surface with infinite outward cusps on its boundary.  It is sometimes convenient to truncate the outward boundary cusps.   Some ends of leaves of the measured lamination $L_i$ extend into the cusps, so when we truncate $F_i=S$ we obtain an essential measured lamination with boundary in the arcs of truncation, which are bold in the figure.  In general, we will deal with surfaces $ S$ with infinite interior cusps and infinite boundary cusps.  Truncating the cusps and boundary cusps we obtain a {\it surface pair} $(\bar S, \alpha)$ as show in Figure \ref{DepthSurfPair}(b).  {\it Interior cusps} correspond to closed curves of $\alpha$, {\it boundary cusps} correspond to arcs of $\alpha$.  The surface $S$ may have some closed boundary components (coming from closed leaves of some $L_j$) which remain in $(\bar S, \alpha)$ as closed boundary components disjoint from $\alpha$.  We will often use $\delta$ to denote the closure in $\bdry \bar S$ of $\bdry \bar S\setminus \alpha$.

From the point of view of hyperbolic geometry, the surface-with-cusps $S$ is the ``wrong" surface.  There is a {\it dual surface-with-cusps} $\DS$ which, when given a hyperbolic structure, gives a closer approximation to the finite depth measured laminations.  The cusps of $S$ become boundary curves in $\DS$ and the boundary curves of $S$ become cusps of $\DS$, see Figure  \ref{DepthSurfPair}(c).   We may suppose there is a fixed given homeomorphism between the interiors of $S$ and $\breve S$ which does not extend to boundaries.  The homeomorphism gives a correspondence between well-behaved curves in the two surfaces.  A well-behaved bi-infinite embedded curve in $S$ corresponds to a properly embedded arc in $\breve S$, and so on.

\begin{figure}[ht]
\centering
\scalebox{1}{\includegraphics{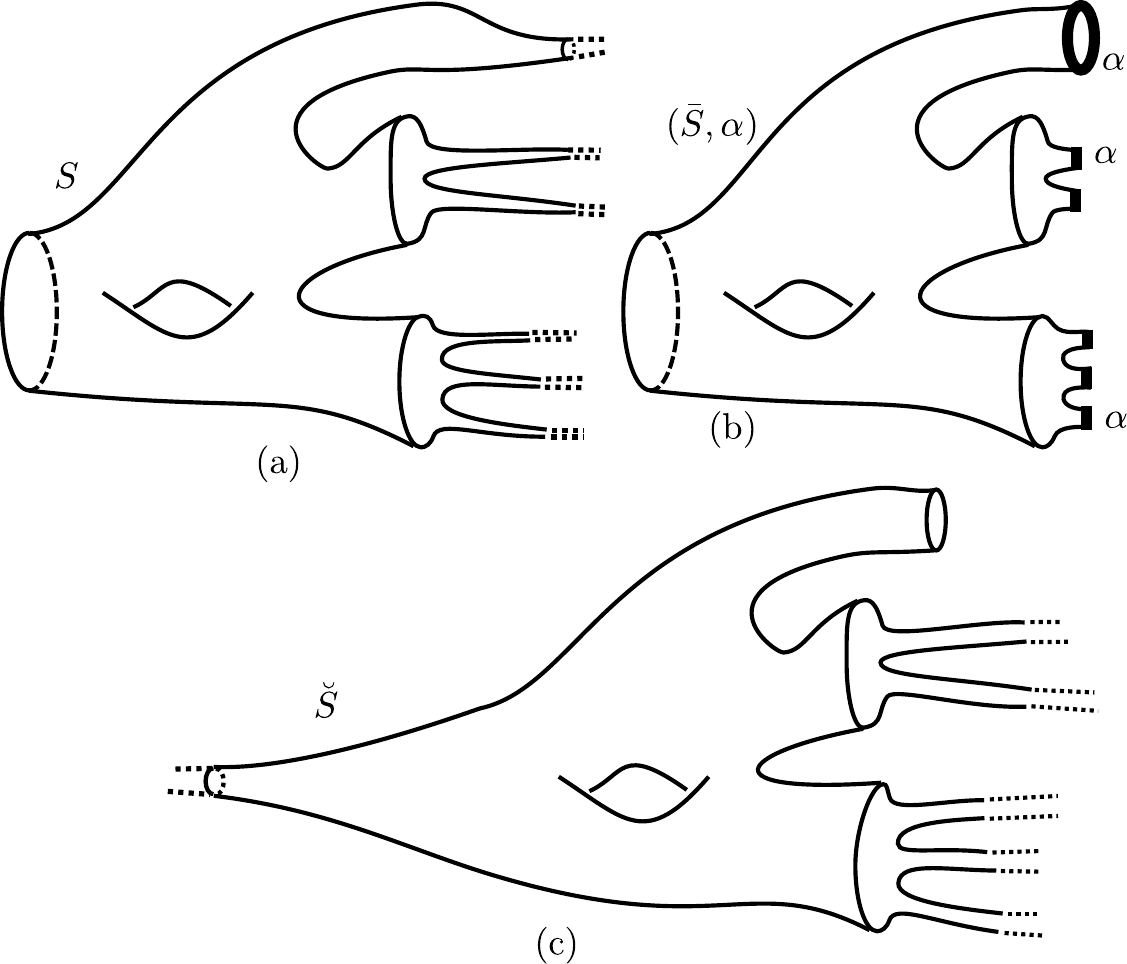}}
\caption{\small Surface $ S$ with cusps, the corresponding surface pair $(\bar S, \alpha)$, and the dual surface $\breve S$.}
\label{DepthSurfPair}
\end{figure}

Throughout the paper, we will assume that the geometric Euler characteristic of surface pairs satisfies $\Chi_g(\bar S, \alpha)<0$, which implies that $ S$ and the dual $\DS$ have complete hyperbolic structures.  We will also assume that $S$ is oriented.  Since $S$ and $\DS$ are associated to $(\bar S,\alpha)$ uniquely, we may define $\Chi_g(S)=\Chi_g(\DS)=\Chi_g=\Chi_g(\bar S,\alpha)$.   See \cite{UO:Pair} for a definition of  $\Chi_g$.

Our goal in this paper is to define and describe a space of finite depth measured laminations.  For many different reasons, to define such a space, one is forced to enlarge the set of laminations considered.  The larger set of laminations is the set of ``finite depth" laminations.  A finite depth lamination has a ``finite depth structure," but the lamination at each level need not be measured in the usual sense.  
We will allow transverse measures which are not required to be locally finite.  This means that a lamination at each level is only ``partially measured."  

The following is a precise definition.  For technical reasons, in our setting it makes more sense to speak of ``finite height laminations."   The terminology ``finite depth" is commonly used in foliation theory.

\begin{defn}\label{FDepthDef}  A {\it finite height (depth) essential lamination} in a surface $S$ with cusps is the isotopy class of an essential laminations $\displaystyle L=\bigcup_{j=0}^hL_j$ where $L_j=X_j\cup Y_j$ satisfying:

\begin{tightenum}
\item  $L_i$ a lamination embedded in $\displaystyle\hat S_i=S\setminus \bigcup_{j>i}L_i$ with no $\bdry$-parallel leaves;

\item  $Y_i$ a lamination (with everywhere locally infinite transverse measure) embedded in $\displaystyle\hat S_i= S\setminus \bigcup_{j>i}L_i$;

\item $X_i$ is a measured lamination, with transverse real measure $\mu_i$ of full support, embedded in $\displaystyle\grave S_i= S\setminus\left[ Y_i\cup\left(\bigcup_{j>i}L_i\right)\right]$;

\item $\bar L_i=L_i\cup \bdry S_i$ is a  lamination in the completion $S_i$ of $\hat S_i$, which is a surface with cusps;

\item $X_i$ is a measured lamination (in the sense of \cite{UO:Pair}) with transverse real measure $\mu_i$ embedded in the completion of $\displaystyle\grave S_i$.
\end{tightenum}
We say $L$ is a {\it total height $h>0$ lamination} if $L_0$ and $L_h$ are non-empty.

We say  $\bar L_i=L_i\cup \bdry S_i$ is the {\it completion of $L_i$}.  Similarly, we define {\it completions} $\bar Y_i=Y_i\cup \bdry S_i$ and $\bar X_i$ being the union of $X_i$ and the boundary of the completion of $\grave S_i$.

For simplicity, throughout most of this paper we assume all underlying laminations can be represented as geodesic laminations in $S$ with some hyperbolic structure.  This means that in some cases transverse measures must be atomic.  On the other hand, in the last section about actions on trees, we must avoid leaves with atomic measures, so there we will replace leaves with atomic measures by families of isotopic leaves with non-atomic transverse measures.
 \end{defn}
 
 One reason to study finite height measured laminations, is that any essential lamination admits a structure as a finite height measured lamination.   This means it admits a structure as a finite height lamination with $Y_j=\emptyset$ for each $j$ in the above definition.
 
 \begin{proposition} \label{PlanteProp}  Suppose $L$ is an essential lamination in $S$ (geodesic lamination in $S$).  Then $L$ admits a structure as a finite height measured lamination, $\displaystyle L=\bigcup_{j=0}^hL_j$.   This means in particular that each $L_j$ has a transverse $\reals$-measure of full support.
 \end{proposition}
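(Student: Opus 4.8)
The plan is to build the height filtration from the outside in: take $L_h$ to be the largest sublamination of $L$ carrying a transverse real measure of full support, and then repeat inside the completion of its complement. Concretely, set $S_h=S$, $\Lambda^{(h)}=L$, and suppose inductively that $\Lambda^{(i)}$ is a geodesic lamination in a surface–with–cusps $S_i$ with $\Chi_g(S_i)<0$. Since $S_i$ has finite type, $\Lambda^{(i)}$ is carried by a train track with finitely many branches, so the cone $\mathcal{C}_i$ of transverse invariant (possibly atomic) real measures on $\Lambda^{(i)}$ is a finite–dimensional polyhedral cone; a measure $\mu_i$ in the relative interior of $\mathcal{C}_i$ has support a sublamination $L_i:=\mathrm{supp}(\mu_i)$ that contains the support of every element of $\mathcal{C}_i$, hence is the largest sublamination of $\Lambda^{(i)}$ carrying a full–support transverse real measure. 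If $L_i=\Lambda^{(i)}$ we stop; otherwise put $\Lambda^{(i-1)}=\cl\bigl(\Lambda^{(i)}\setminus L_i\bigr)$ inside $S_{i-1}$, the completion of $S_i\setminus L_i$, and continue, finally relabelling so the last lamination produced is $L_0$. With $Y_j=\emptyset$ and $X_j=L_j$ these are the required data: $L_i$ is nonempty until the process stops because any nonempty geodesic lamination on a finite–type surface contains a minimal sublamination, and a minimal geodesic lamination in a surface (a closed geodesic, a properly embedded geodesic arc, or an exceptional minimal lamination) carries a transverse invariant real measure of full support — atomic in the first two cases — so $\mathcal{C}_i\neq\{0\}$; conditions (ii), (iii), (v) of Definition \ref{FDepthDef} hold with the measure $\mu_i$; (iv) is built into the way $S_i$ is completed; and (i) holds because $L$ is essential, since a leaf of $L_i$ parallel into $\partial S_i$ would be parallel into $\partial S$ or would cobound a disk or annulus with portions of the removed leaves $L_j$, $j>i$, neither of which is possible for an essential geodesic lamination.

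The real content is showing the process terminates. The mechanism is that removing $L_i$ ``unwinds'' the complement: every end of a leaf of $\Lambda^{(i)}\setminus L_i$ either already runs out a cusp of $S_i$, or spirals onto a closed geodesic leaf, or converges to an exceptional minimal sublamination — and in the last two cases that minimal sublamination carries a transverse measure, hence lies in $L_i$ and is deleted, so after passing to $S_{i-1}$ the end has nowhere to go but out a new cusp. Thus every leaf of $\Lambda^{(i-1)}$ is a properly embedded geodesic running cusp–to–cusp in $S_{i-1}$, and such a lamination automatically carries a transverse real measure of full support: assign arbitrary positive weights to the arcs, local finiteness is automatic because a compact transversal meets only finitely many of them, and invariance holds since a geodesic running into a cusp wraps around no loop. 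Hence $L_{i-1}=\Lambda^{(i-1)}$ and the process stops after at most one step beyond the choice of $L_h$; in particular $L$ has finite height (indeed $h\le 1$ for this filtration). Alternatively, one can simply invoke the finite–depth structure theory for essential laminations in surface pairs developed in \cite{UO:Pair}.

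I expect the main obstacle to be precisely the termination step, and within it the bookkeeping around the completion $S_i\setminus L_i\mapsto S_{i-1}$: when $L_i$ contains exceptional minimal pieces, $S_i\setminus L_i$ is a union of crowned incomplete surfaces whose metric completion reintroduces finitely many leaves as boundary, and one must verify in the surface–pair formalism of \cite{UO:Pair} that the ends of the surviving leaves genuinely become cusp (or boundary) ends, so that $\Lambda^{(i-1)}$ really is a lamination by properly embedded arcs with no $\partial$–parallel leaves. A secondary, routine nuisance is that the measures $\mu_i$ produced this way are frequently atomic — which Definition \ref{FDepthDef} permits, since the underlying laminations are geodesic — and that for the later sections on actions on $\SB$–trees one must, as remarked there, replace each leaf carrying an atom by a parallel family of leaves carrying a non-atomic transverse measure.
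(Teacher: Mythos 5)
Your route is genuinely different from the paper's: you work top--down, taking the \emph{largest} measured sublamination first and then claiming the residue is already measured (so $h\le 1$), whereas the paper works bottom--up with Plante's limiting argument \cite{JP:PlanteLimit} -- counting measures along an exhaustion of a single leaf, normalized as weight vectors on a train track $\tau$ carrying $L$, with a subsequential limit giving a nonzero invariant measure -- iterated on completions of complements, and with termination coming from the soft observation that each stage assigns a nonzero weight to at least one new branch of the fixed $\tau$. The gap in your version sits exactly where you predicted, at termination, and it is not mere bookkeeping: the whole step rests on the unproved assertion that once the maximal measured sublamination is deleted, every end of every surviving leaf ``has nowhere to go but out a new cusp'' (or to spiral onto a closed boundary geodesic). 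What has to be excluded is a surviving leaf whose end accumulates on \emph{other surviving leaves}; to exclude it you need the structure theorem for geodesic laminations -- each half-leaf either exits a cusp, spirals onto a closed boundary curve, or has limit set a \emph{minimal} sublamination, and only finitely many leaves lie outside the minimal components -- which you neither prove nor cite, and which in the present generality (cusps and boundary, leaves running out cusps and spiraling onto $\bdry S$) requires checking. Without it, your statements that $\Lambda^{(h-1)}$ consists of properly embedded leaves and that ``local finiteness is automatic because a compact transversal meets only finitely many of them'' are precisely the points at issue rather than consequences, and the conclusion $h\le 1$ is unearned. The same remark applies to the asserted fact that an exceptional minimal lamination carries a full-support invariant $\reals$-measure: that existence statement is itself normally proved by exactly the Plante/train-track limit the paper uses, so it must be proved or properly cited, not taken as given.

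Two further points. Your fallback -- ``simply invoke the finite-depth structure theory of \cite{UO:Pair}'' -- is not available: that paper treats ($\reals$-)measured laminations and their spaces, and does not show that an arbitrary essential lamination admits a finite height measured structure, which is the content of the present proposition; leaning on it is close to circular. Note also that the paper's mechanism repairs your argument cheaply: you do not need maximality of $L_h$, nor $h\le 1$ -- any nonzero invariant measure at each stage, produced by the Plante limit on the fixed carrying train track, suffices, and the process terminates because each stage puts positive weight at some level on at least one new branch of $\tau$ (alternatively, Lemma \ref{DepthBoundLemma} bounds the total height). Finally, a small but real slip: defining $\Lambda^{(i-1)}$ as the closure of $\Lambda^{(i)}\setminus L_i$ \emph{in} $S_{i-1}$ adjoins the closed boundary curves onto which surviving leaves spiral, i.e.\ $\bdry$-parallel leaves forbidden by condition (i) of Definition \ref{FDepthDef}; as in the paper, $L_{i-1}$ should be taken closed only in $\hat S_{i-1}$, with $\bar L_{i-1}=L_{i-1}\cup\bdry S_{i-1}$ reserved for the completion.
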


We note that $L_i$ is a lamination in $\hat S_i$ but it may not be closed in the completion $S_i$, because leaves of $L_i$ may spiral to a closed curve of $\bdry S_i$, but $\bar L_i$ is a lamination in $S_i$.  We will see that if $L_i$ is a measured lamination in $\hat S_i$, then $\bar L_i$ has a transverse measure which is locally finite on transversals in $\hat S_i$, but is infinite on transversals having one endpoint in $\bdry S_i$.   We say this kind of transverse measure is a {\it geometric measure} on $\bar L_i$, to be defined later.

In Section \ref{Parameter} we will define a parameter space $\SB$, and we will define Borel measures with values in this parameter space.   As we shall see, essential laminations with suitable transverse measures in this parameter space are finite height essential laminations. The parameter space is sufficiently closely related to the non-negative extended reals to allow us to imitate many of the features of the construction of the measured lamination space of a surface pair, which is defined in terms of ordinary finite measures on transversals, as in \cite{UO:Pair}.  An $\SB$-measured lamination will be defined like a measured lamination, or $\reals$-measured lamination;  it has an invariant measure on transversals with values in $\SB$.  We will show that every $\SB$-measured lamination is a finite height lamination, see Lemma \ref{DefnsLemma}.

It turns out that $\SB$-measures on an essential lamination can be related to each other by a certain ``level alignment" equivalence relation.  Definition \ref{FDepthDef} does not distinguish between different representatives.  A ``proximal representative lamination" is obtained roughly by moving laminations $L_i$ to a lower level if possible. In fact, we will also use a stronger equivalence relation, ``level adjustment" from which one obtains a ``contiguous representative" by also moving {\em components} of laminations $L_i$ to a higher level when possible.  We will let $\F(S)$ denote the set of pairs $(L,\nu)$, where $L$ is a proximal finite height essential lamination and $\nu$ is the associated $\SB$-measure.  Similarly, we let $\FC(S)$ denote the set of  contiguous representatives, which of course is a subset of $\F(S)$.  Then we will topologize the sets $\F(S)$ and $\FC(S)$ very roughly as follows. Let $S$ be the surface-with-cusps with $\chi_g(S)<0$ and with a chosen complete hyperbolic structure.  Let $\G$ denote the set of geodesics in $S$, including geodesics of $\bdry S$.  The set $\G$ does not include any arc geodesics with boundary in $\bdry S$.  For technical reasons, $\G$ will include multiple copies of some geodesics, some copies being oriented.   We use intersection numbers $i_\theta((L,\nu))\in \SB$ with elements of $\theta\in \G$ to topologize $\F(S)$
 
\begin{thm} \label{SpacesThm} The set $\F(S)$ of proximal representatives of $\SB$-measured laminations in a surface-with-cusps $S$ is in bijective correspondence with the points of a subspace $\F(S)$ of $\SB^\G$.  If we also projectivize in $\SB^\G$ we obtain the space  $\PF(S)$.  Also, there are subspaces $\FM(S)\subset \F(S)$ and $\PFM(S)\subset \PF(S)$ of (projectivized)  finite height laminations which are $\reals$-measured at each level. 
\hip

\noindent In the same way, we obtain a subspace $\FC(S)\subset\F(S)\subset \SB^\G$, a projectivized version $\PFC(S)$ and subspaces $\FCM(S)$ of $\FC(S)$ and $\PFCM(S)$ of $\PFC(S)$.
\end{thm}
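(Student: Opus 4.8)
The plan is to construct the map $\F(S) \to \SB^\G$ by sending a pair $(L,\nu)$ to the tuple of intersection numbers $\big(i_\theta((L,\nu))\big)_{\theta\in\G}$, and then show this map is injective, identify its image, and verify that the projectivization and the subspace refinements behave compatibly. First I would make precise the intersection number $i_\theta((L,\nu))\in\SB$: realize $L$ geodesically in the chosen hyperbolic structure on $S$, put $\theta$ in minimal position (which for geodesic laminations is automatic, up to the technical copies/orientations in $\G$), and integrate the $\SB$-valued transverse measure $\nu$ along $\theta$. One must check this is well-defined on isotopy classes and independent of the choices hidden in $\G$; this is essentially the $\SB$-valued analogue of the classical fact that geodesic representatives minimize intersection number, and it should follow from the corresponding statement in \cite{UO:Pair} applied level by level, together with the order and semiring structure on $\SB$ that lets us add contributions from different levels coherently.

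Next comes injectivity. Given $(L,\nu)$ and $(L',\nu')$ with the same intersection functional on $\G$, I would recover $L$ from the data: the support of $L$ is detected because a geodesic $\theta$ meets $L$ essentially iff $i_\theta > 0$ in an appropriate sense, so the underlying geodesic lamination is determined by which $\theta\in\G$ have nonzero intersection — this is the standard argument that a geodesic lamination is the ``limit'' of the geodesics crossing it, now read off from $\SB$-values rather than reals. Then the transverse measure $\nu$ is reconstructed from the values $i_\theta$ on transversals, using that these values determine the measure of every arc and hence the Borel measure. The finite height structure (the decomposition $L=\bigcup L_j$, and within each level the splitting $L_j = X_j\cup Y_j$) is then read off from the order structure on $\SB$: the ``infinitesimal'' versus ``real'' parts of the $\SB$-values separate the $Y_j$ (locally infinite measure) from the $X_j$ (real measure of full support), and the level at which a leaf sits is encoded by how its neighborhood's $\nu$-values compare in the semiring ordering. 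Here the hypothesis that we use \emph{proximal} representatives is what makes the decomposition canonical, so that $(L,\nu)$ is genuinely recovered, not just its level-alignment class — this is why the theorem is stated for $\F(S)$ rather than for raw finite height laminations. I expect this reconstruction step to be the main obstacle: one must show that no information is lost in passing to the intersection functional, which requires a careful analysis of how $\SB$-measures interact with the geodesics in $\G$, and in particular that enough geodesics (including the oriented copies, needed to detect spiraling toward boundary curves and the ``geometric measures'' on the $\bar L_i$) are present in $\G$ to pin down everything.

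Having established the bijection $\F(S)\leftrightarrow \F(S)\subset\SB^\G$, the projectivized statement follows by descending through the scaling action of the positive part of $\SB$ on $\SB^\G$; one checks the action is free on the relevant locus (no nonzero tuple is fixed, since some $i_\theta\neq 0$) so $\PF(S)$ is well-defined as the quotient, and intersection numbers are homogeneous of degree one in $\nu$ so the map is equivariant. For the subspaces $\FM(S)$ and $\PFM(S)$, I would simply observe that requiring each level to be $\reals$-measured is equivalent to requiring every $i_\theta$ to lie in the ``purely real'' sub-semiring of $\SB$ (equivalently $Y_j=\emptyset$ for all $j$), which is a closed condition cutting out a subspace; by Proposition \ref{PlanteProp} this subspace is nonempty and in fact every essential geodesic lamination appears in it. Finally, the contiguous versions $\FC(S)$, $\PFC(S)$, $\FCM(S)$, $\PFCM(S)$ are handled identically: $\FC(S)$ is the subset of $\F(S)$ consisting of those $(L,\nu)$ that are already contiguous representatives (components moved up whenever possible), which under the bijection corresponds to a subset of $\SB^\G$ defined by the extra ``no admissible upward move'' condition; the same injectivity proof applies verbatim since contiguous representatives are in particular proximal, and the projectivization and $\reals$-measured refinements restrict without change. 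The one point to be careful about throughout is that $\SB$ is only a semiring (no subtraction), so every place where the classical argument would subtract two intersection numbers must be replaced by an order comparison or by working with the semiring operations directly; I would flag this as a recurring technical nuisance rather than a genuine difficulty.
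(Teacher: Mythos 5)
Your plan follows the paper's route exactly (the paper deduces the theorem from Proposition \ref{InjectiveProp} together with Definitions \ref{SpacesDefns}), but the step you defer as ``the main obstacle'' is in fact the entire mathematical content, and your sketch of it misstates how the reconstruction goes. The support of $L$ is \emph{not} recovered by ``$i_\theta>0$ iff $\theta$ meets $L$'': a leaf of the everywhere-locally-infinite part $L_\infty$ is itself an element of $\G$ with intersection number $0$ (by the leaf convention), and every geodesic crossing it reads $\infty$, so $L_\infty$ is detected negatively, as the union of non-oriented $\gamma$ with $i_\gamma(L)=0$ that are crossed by \emph{no} $\rho\in\G$ with finite intersection; the converse direction --- that any geodesic lying in the completion $F$ of $S\setminus L_\infty$ is crossed by some finite-intersection $\rho$ --- is precisely why $\G$ must contain the spiral geodesics and the oriented perturbed variants, the extreme case being a cusped-annulus complementary component where only an oriented variant of the boundary curve works (this is the paper's Lemma \ref{InjectiveLem}). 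Similarly, ``the values $i_\theta$ determine the measure of every arc'' is not available directly, since $\G$ contains only complete geodesics, no transversal arcs; that is the injectivity theorem of \cite{UO:Pair} for the $\reals$-measured remainder $L\setminus L_\infty$ in $F$. Finally, the level structure is recovered not pointwise from the semiring order but by induction on height, extracting the top-level data from $i_\theta(L)$ and regarding geodesics of the completion of $S\setminus L_h$ as a subset of $\G$ (Proposition \ref{InjectiveProp}). Until these steps are carried out, the injectivity claim is unproved.

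Two of your concrete assertions are also wrong, though repairable. First, the scaling action of $(0,\infty)$ on $\SB^\G$ is not free away from $0$: $\lambda(j,\infty)=(j,\infty)$, so any nonzero tuple all of whose nonzero coordinates are infinities --- e.g.\ the image of any element of $\GL(S)$ --- is fixed by every $\lambda$; fortunately freeness is irrelevant, since $\PF(S)$ is simply the quotient by the projective equivalence relation. Second, $\FM(S)$ is not the locus where the $i_\theta$ lie in the level-$0$ (``purely real'') part of $\SB$ --- a finite height measured lamination of height $h>0$ has intersection numbers at all levels up to $h$ --- and it is not cut out by a closed condition: in Example \ref{SpiralExample} the measured laminations with weights $[(0,1),(0,r),(0,1)]$ converge as $r\to\infty$ to $[(0,1),(0,\infty),(0,1)]$, which has an infinite atomic weight and is not $\reals$-measured, so $\FM(S)$ is not closed in $\F(S)$. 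The correct characterization is finiteness of $\real(i_\theta(L))$ for all non-spiral $\theta\in\G$, and what one actually uses to see that $\PFM(S)$ (and likewise $\PFC(S)$, $\PFCM(S)$) sits inside $\PF(S)$ as a subspace is that the relevant saturated subset is \emph{open} in $\F(S)$, as in the remark following Definitions \ref{SpacesDefns}; your closedness route is not available.
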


The names of these spaces should be read as follows:   $\F(S)$ is the {\it space of finite height laminations in $S$};  the space $\PF(S)$ is the {\it projective space of finite height laminations in $S$};  the space $\FM(S)$  is the {\it space of finite height measured laminations in $S$}; the space $\PFM(S)$ is the {\it projective space of finite height measured laminations in $S$}.  Similarly,  $\FC(S)$ is the {\it space of finite height contiguous laminations in $S$};  the space $\PFC(S)$ is the {\it projective space of finite height contiguous laminations in $S$}; etc.

The dual point of view is equally good, and in some ways preferable.  If $\DS$ is the dual surface endowed with a hyperbolic structure, then for every essential lamination $L$ in $S$ we have a corresponding dual lamination $\DL$ in $ \DS$.   Leaves of $L$ joining cusps become geodesic leaves of $\DL$ joining corresponding boundary components of $\DS$.   Closed leaves remain closed leaves. Since $\bar L$ contains boundary-parallel leaves, some leaves of $\DBL$ isotope into cusps, and can be represented by horocycles or horocycle segments.   Thus the lamination $\DBL$ can be realized as a  lamination each of whose leaves is a  geodesic or a (segment of) a horocycle, where the (segments of) horocycles arise from boundary-parallel leaves of $\bar L$.   All other leaves of $\bar L$ become geodesics, possibly with boundary in $\bdry \DS$.  No leaf of $\DBL$ has an end in a boundary cusp of $\DS$, but ends of leaves in $\DL$ can be mapped to interior cusps of $\DS$, corresponding to leaves of $L$ spiraling to a component of $\bdry S$.    

The topology of a space of laminations with locally everywhere infinite transverse measures can be considered in a context independent of finite height structures.   Let $\GL(S)$ denote the set of geodesic laminations in $S$.  Let $\G$ denote the set of embedded geodesics in $S$, with some geodesics represented more than once, possibly with orientations, as before.  Let $\{0,\infty\}$ be a discrete space with two elements.  For every $L\in \GL(S)$ and every $\gamma\in \G$, $\gamma$ a geodesic, let $i_\gamma(L)=0$ if $\gamma$ does not intersect $L$ transversely; let $i_\gamma(L)=\infty$ if $\gamma$ intersects $L$ transversely.  In the case of closed geodesics $\gamma$, each of which appears multiple times in $\G$, the intersection numbers will be appropriately defined later.
 
\begin{thm} \label{GeodesicLamThm} The map $L \mapsto (i_\theta (L))_{\theta\in \G}\subset \{0,\infty\}^\G$ is an injective map $\GL(S)\to \{0,\infty\}^\G$, so $\GL(S)$ obtains the subspace topology in $\{0,\infty\}^\G$; $\GL(S)$ is totally disconnected.  The space $\GL(S)$ is homeomorphic to a subspace of $\F(S)$ contained in the subspace of total height 0 laminations.
\end{thm}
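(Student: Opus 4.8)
The plan is to prove the three assertions separately: injectivity of $L\mapsto(i_\theta(L))_{\theta\in\G}$, total disconnectedness of $\GL(S)$, and the embedding of $\GL(S)$ into the total-height-$0$ locus of $\F(S)$.

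For injectivity, suppose $L\ne L'$ in $\GL(S)$. A geodesic lamination is the union of its leaves and is determined by that set of leaves, so after possibly interchanging the two laminations I may fix a leaf $\ell$ of $L$ that is not a leaf of $L'$. Since $\ell$ and every leaf of $L'$ is a complete embedded geodesic, and two complete geodesics either coincide, are disjoint, or cross transversely, either (a) $\ell$ crosses $L'$ transversely somewhere, or (b) $\ell$ is disjoint from all of $L'$. In case (a) the geodesic $\ell$ itself (or a copy of $\ell$ appearing in $\G$) already distinguishes $L$ from $L'$: since $\ell$ is a leaf of $L$ it is not crossed transversely by $L$, so $i_\ell(L)=0$, whereas $i_\ell(L')=\infty$.

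In case (b), $\ell$ (being connected) lies in the interior of a single complementary region $R$ of $L'$, and it suffices to produce an embedded complete geodesic $\gamma\subset\intr(R)$ crossing $\ell$ transversely; then $\gamma\cap L'=\emptyset$ forces $i_\gamma(L')=0$ while $i_\gamma(L)=\infty$. Here $R$, or rather the completion of its interior, is a finite-type hyperbolic surface with geodesic boundary made of leaves of $L'$ and properly containing the complete geodesic $\ell$. When $\ell$ is bi-infinite I would extend it within $R$ to a geodesic lamination whose complementary regions flanking $\ell$ are ideal triangles; gluing two flanking triangles back along $\ell$ produces an ideal quadrilateral whose opposite diagonal is an embedded complete geodesic of $\intr(R)$ crossing $\ell$. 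When $\ell$ is a closed geodesic I would instead observe that $\ell$ is an essential, non--boundary--parallel simple closed curve of $R$ — it is not isotopic to a closed geodesic boundary component, since a complete hyperbolic surface has no two disjoint isotopic closed geodesics, nor to a cusp or a crown end, since no complementary region of a geodesic lamination is an annulus — so $R$ is not of pair-of-pants type and hence carries a simple closed geodesic $\gamma$ crossing $\ell$ inside $\intr(R)$. I expect this to be the main obstacle: the ``thin'' complementary regions (ideal polygons, crown or cusped regions, a leaf flanking a single triangle on both sides, closed leaves) must be handled individually, and one must check that the geodesic produced is genuinely embedded; all of this leans on the finite-type structure of complementary regions of geodesic laminations. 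Total disconnectedness then follows at once: as $\{0,\infty\}$ is discrete, each cylinder set $\{f:f(\theta)=0\}$ and its complement are clopen in $\{0,\infty\}^\G$, distinct points differ in some coordinate and are separated by such a clopen set, so $\{0,\infty\}^\G$ is totally disconnected, and by injectivity $\GL(S)$ carries its subspace topology and is likewise totally disconnected.

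Finally, for the embedding into $\F(S)$: a geodesic lamination $L$ has no $\bdry$-parallel leaves — an interior geodesic of a hyperbolic surface with geodesic boundary is isotopic neither to a boundary geodesic nor into a cusp — so $L$ is essential, and giving each leaf of $L$ the everywhere-locally-infinite transverse measure, i.e. taking $Y_0=L$ and $X_0=\emptyset$, exhibits $L$ as a finite height lamination of total height $0$; by Lemma \ref{DefnsLemma} this is the finite height structure of an $\SB$-measure $\nu_L$, and $(L,\nu_L)$ is automatically a proximal representative because there is no level below $0$. Thus $L\mapsto(L,\nu_L)$ sends $\GL(S)$ injectively into the total-height-$0$ part of $\F(S)\subset\SB^\G$. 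For a point of this form, $i_\theta((L,\nu_L))$ equals $0$ when $\theta$ misses $L$ and equals the ``purely infinite'' element of $\SB$ when $\theta$ crosses $L$; that is, $i_\theta((L,\nu_L))=\iota(i_\theta(L))$ for the natural inclusion $\iota\colon\{0,\infty\}\hookrightarrow\SB$. By the construction of $\SB$ in Section \ref{Parameter}, the two points $0$ and $\iota(\infty)$ form a discrete two-point subspace of $\SB$, so $\iota$, and hence the induced product map $\{0,\infty\}^\G\to\SB^\G$, is a topological embedding; restricting it identifies $\GL(S)$ homeomorphically with a subspace of $\F(S)$ lying in the total-height-$0$ locus. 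The only remaining points to verify — essentiality of $L$ and the exact matching of the two kinds of intersection number — are routine from the definitions.
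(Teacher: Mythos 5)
Your proposal is correct, and it reaches the three conclusions by routes that differ from the paper's in most parts. For injectivity the paper simply cites Corollary \ref{GeodesicInjectiveCor}, i.e.\ the restriction of Lemma \ref{InjectiveLem} to $\GL(S)\subset\WM(S)$, whereas you re-prove it directly; your dichotomy (a leaf $\ell$ of $L$ either crosses $L'$ transversely, in which case the coordinate $i_\ell$ already separates, or lies in a complementary region $R$, where one must exhibit an embedded geodesic of $\G$ inside $R$ crossing $\ell$) is exactly the geometric fact the paper's Lemma \ref{InjectiveLem} rests on and likewise leaves largely as an exercise, and the loose ends you flag (embeddedness of the quadrilateral diagonal, thin regions such as trim annuli, the closed-leaf case) are the same cases the paper acknowledges there. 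Note that your task is strictly easier than that lemma's: you never need the crossing geodesic to have finite intersection with $L$, so unoriented spiral geodesics of $\G$ suffice and no oriented variants are needed. For total disconnectedness your soft argument (a subspace of a product of discrete spaces is totally disconnected) is cleaner than the paper's, which separates two given laminations by an explicit clopen coordinate condition and in doing so essentially reruns the injectivity argument. For the final assertion the paper passes through the $\WM(S)^{\nat}$ model of Definition \ref{WeakDefn} (hence implicitly through Theorem \ref{DefnsThm}), realizing $\GL(S)\subset\WM(S)$ as the height-$0$ slice, whereas you work directly in $\SB^\G$: you check $i_\theta((L,\nu_L))=\iota(i_\theta(L))$ for $\iota\colon\{0,\infty\}\to\SB$ with $\iota(\infty)=(0,\infty)$, that $\{0,(0,\infty)\}$ is discrete in the order topology of $\SB$, and that a product of embeddings is an embedding. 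This buys independence from Theorem \ref{DefnsThm} (whose proof appears only after this theorem's proof in the paper), at the mild cost of verifying the intersection-number match also for oriented variants, which you rightly flag and which is immediate from the definitions since both sides are governed by the same spiraling convention.
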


\begin{question}  What is the relationship of the topology of $\GL(S)$ to the Hausdorff metric topology on geodesic laminations?  Could it be the same?
\end{question}

If one examines the topology of $\PF(S)$ using examples, as we shall do later, one discovers that there is much non-Hausdorff behavior in the spaces.  Suppose we are given any finite height lamination of the form $\displaystyle L=\bigcup_{j=0}^hX_j\cup Y_j$, where $X_j$ is transversely measured and $Y_j$ has everywhere locally infinite transverse measure.  If $L'$ is topologically the same lamination as $L$, but with the transverse locally finite measures on $X_j$ (partly) extended to $Y_j$, then it turns out that the projective class of $L'$ cannot be separated in $\PF(S)$ from the class of $L$ by a pair of disjoint open neighborhoods of $L'$ and $L$.  Here if  $\displaystyle L'=\bigcup_{j=0}^hX_j'\cup Y_j'$, we are assuming that for some $j$, $X_j'\supsetneqq X_j$, with the measure on $X_j'$ an extension of the measure on $X_j$.  

Although the topology of $\PF(S)$ is somewhat pathological, the space can, to some extent, be understood in terms of more familiar spaces.  If $S$ is a surface with cusps, the measured lamination space $\M(S)$ and the projective measured lamination space $\PM(S)$ were calculated in \cite{UO:Pair}.  Note, however, that in \cite{UO:Pair}, the definition of the measured lamination space is not quite standard.   In this paper, we will define enlargements $\WM(S)$ and $\PWM(S)$ of these spaces whose elements are (projective classes of) measured laminations with transverse measures which are not everywhere locally finite, i.e., transverse measures with values in $[0,\infty]\subset \bar\reals$.   We can read $\WM(S)$ as {\it the weakly measured lamination space of $S$} and $\PWM(S)$ as {\it the projective weakly measured lamination space of $S$}.  Each of $\WM(S)$ and $\PWM(S)$ contains $\GL(S)$ as a subspace, while $\WM(S)$ contains $\M(S)$ as a subspace and $\PWM(S)$ contains $\PM(S)$.

The following are alternative definitions of $\F(S)$ and $\PF(S)$ .  We will have to show they are equivalent to our previous definitions.

\ \begin{defn} \label{WeakDefn} Let $S$ be a surface-with-cusps satisfying $\Chi_g(S)<0$.  The space $\F(S)$ is a subspace $\F(S)\embed \WM(S)^\nat$: $$\F(S)=\{(W_0,W_1,W_2,\ldots,W_h,\emptyset, \emptyset, \ldots)\in \WM(S)^\nat: \text{ satisfying conditions (i)-(iii) below}\}:$$

\begin{tightenum}
\item $W_h\ne \emptyset$, $W_i=\emptyset$ for $i>h$,
\item $W_{i}\subsetneqq W_{i-1}$ for $i\le h$ ,
\item $W_i$ has weak measure $\rho_i$ satisfying $\rho_i(T)=\infty$ if $\rho_{i+1}(T)>0$ for any transversal $T$.
\end{tightenum}

If we simultaneously projectivize all measures $\rho_i$, so that   $$[(W_0,\rho_0),(W_1,\rho_1),\ldots]\sim [(W_{0},\lambda\rho_{0}),(W_1,\lambda\rho_{1}),\ldots]$$ for any $\lambda>0$, we obtain a quotient called the projective space $\PF(S)$. 

If we replace condition (iii) by 

\hip

(iii$'$) $W_i$ has weak measure $\rho_i$ satisfying $\rho_i(T)=\infty$ if and only if $\rho_{i+1}(T)>0$ for any open transversal $T$,
\hip
\noindent then we obtain spaces of laminations which are $\reals$-measured at each level, namely $\FM(S)$ and $\PFM(S)$.  

\end{defn}

The relationship between the laminations $W_i$ for a proximal representative and the laminations $L_i$ in Definition \ref{FDepthDef} is the following:  $L_h=W_h$ is a lamination in $S$, $L_i=W_i\setminus W_{i+1}$ is a weakly measured lamination in the completion $S_i$ of $\hat S_i=S\setminus W_{i+1}$.  

\begin{thm} \label{DefnsThm} The spaces described in Theorem \ref{SpacesThm} are homeomorphic to the spaces redefined in Definition \ref{WeakDefn}.
\end{thm}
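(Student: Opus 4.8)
The plan is to produce an explicit bijection between the two descriptions of $\F(S)$ and then to check that, under it, the two ambient topologies agree after permuting the factors of a product. On underlying sets the bijection is the correspondence already recorded just after Definition \ref{WeakDefn}. Given a proximal $\SB$-measured lamination $(L,\nu)$, Lemma \ref{DefnsLemma} (every $\SB$-measured lamination is a finite height lamination) supplies the finite height structure $L=\bigcup_{j=0}^h L_j$ with $L_j=X_j\cup Y_j$; put $W_i=\bigcup_{j\ge i}L_j$ (a weakly measured lamination in $S$, with $W_0=L$) and let $\rho_i$ be the weak measure obtained from $\nu$ by evaluating at level $i$, so $\rho_i$ agrees with the real measure $\mu_i$ on transversals where it is locally finite and equals $\infty$ on transversals meeting $W_{i+1}$ positively or meeting $Y_i$. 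Conversely, from $(W_i,\rho_i)$ satisfying (i)--(iii) one sets $L_i=W_i\setminus W_{i+1}$, splits it into the locally finite part $X_i$ and the locally infinite part $Y_i$ of $\rho_i$, and reassembles $\nu$ as the $\SB$-measure whose value on a transversal $T$ records the profile $(\rho_i(T))_{i\in\nat}$. First I would verify these are well-defined and mutually inverse: that the forward $W_i$ satisfy (i)--(iii) (strict nesting because each $L_{i-1}$ is nonempty with no $\bdry$-parallel leaves, as holds for a proximal representative, and (iii) directly from $W_{i+1}\subseteq W_i$), and that the backward $(L_i)$ form a \emph{proximal} representative --- here one matches condition (iii), as opposed to (iii$'$), against the ``cannot be pushed to a lower level'' property defining proximal representatives, while (iii$'$) corresponds to $Y_i=\emptyset$ for all $i$; this last point simultaneously identifies $\FM(S)$ with its Definition \ref{WeakDefn} counterpart.

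For the topologies, recall that the first $\F(S)$ carries the subspace topology from $\SB^\G$ via the intersection maps $i_\theta$, while the Definition \ref{WeakDefn} space sits in $\WM(S)^\nat$, with $\WM(S)$ topologized by its intersection numbers and hence embedded in $[0,\infty]^\G$. The heart of the matter is that the $\SB$-valued intersection number and the sequence of $[0,\infty]$-valued ones carry the same data, topology included: for each $\theta\in\G$ the ``level-decomposition'' map $d\colon\SB\to[0,\infty]^\nat$ built into the construction of $\SB$ in Section \ref{Parameter} is a homeomorphism onto its image, and
\[
d\bigl(i_\theta((L,\nu))\bigr)_i \;=\; i_\theta(W_i,\rho_i)\qquad\text{for every }i\in\nat .
\]
Granting this identity, taking $\G$-indexed products and transposing the two factors of $([0,\infty]^\nat)^\G\cong([0,\infty]^\G)^\nat$ (a homeomorphism of product spaces) sends $\F(S)\subset\SB^\G$ homeomorphically onto the Definition \ref{WeakDefn} space inside $\WM(S)^\nat$. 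The projective statement follows since scaling $\nu$ by $\lambda>0$ scales every $\rho_i$ by $\lambda$; the subspaces $\FM(S),\PFM(S)$ are cut out on both sides by the $Y_i=\emptyset$ condition as noted above; and $\FC(S),\PFC(S),\FCM(S),\PFCM(S)$ are cut out on both sides by the same ``contiguity'' constraint, which the bijection respects, so restricting the homeomorphism handles all of them.

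The step I expect to be the genuine obstacle is the displayed compatibility identity together with the assertion that $d\colon\SB\to[0,\infty]^\nat$ is a homeomorphic embedding. Although $\SB$ is ``closely related to the extended non-negative reals,'' its topology is defined intrinsically from its order and semiring structure in Section \ref{Parameter}, so one must check by hand that the level-evaluation maps generate exactly that topology --- neither coarser nor finer --- and that no information is lost in passing from an $\SB$-measure to its sequence of level measures. The delicate cases are closed leaves carrying atomic transverse measure and leaves of $Y_i$ spiraling to a boundary curve of the completion $S_i$, where one must be careful about which level is charged with an infinite intersection and about the $\reals$-valued contribution at the bottom level. Once that bookkeeping is settled, the rest is the routine ``initial topology into a product'' argument plus associativity and commutativity of products.
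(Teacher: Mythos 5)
Your proposal is correct and follows essentially the same route as the paper: identify each $\SB$-valued intersection number with its sequence of level-wise $[0,\infty]$-valued intersection numbers, transpose the factors of $\bigl([0,\infty]^\G\bigr)^\nat\cong\bigl([0,\infty]^\nat\bigr)^\G$, and check the compatibility identity $\psi\, i_\theta(L)=\bigl(j_\theta(W_i)\bigr)_{i\in\nat}$. The one step you flag as a potential obstacle --- that your level-decomposition map $d\colon\SB\to[0,\infty]^\nat$ is a homeomorphism onto its image --- is precisely the homeomorphism $\psi\colon\SB\to\SBB$ already established in Proposition \ref{LHomeoProp}, so no further work is needed there.
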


\begin{corollary} \label{MeasuredRedefineCor} The space $\FM(S)$ of finite height measured laminations is the subspace of  $\F(S)\embed \WM(S)^\nat$
 consisting of elements $(W_0,W_1,W_2,\ldots)$ such that $L_i=W_i\setminus W_{i+1}$ are transversely measured with a (locally finite) transverse $\reals$-measure in $\hat S_{i} =S\setminus W_{i+1}$.
\end{corollary}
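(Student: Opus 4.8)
The plan is to read the corollary off from Theorem~\ref{DefnsThm} together with an unwinding of Definition~\ref{WeakDefn}. By Theorem~\ref{DefnsThm} we may work inside the model of Definition~\ref{WeakDefn}, in which $\F(S)\embed\WM(S)^\nat$ and $\FM(S)$ is exactly the set of sequences $(W_0,W_1,W_2,\ldots)\in\F(S)$ for which condition (iii) is upgraded to the biconditional (iii$'$). So the corollary reduces to the claim that, for a point $(W_0,W_1,\ldots)\in\F(S)$, condition (iii$'$) holds if and only if for every $i$ the lamination $L_i:=W_i\setminus W_{i+1}$ carries a locally finite transverse $\reals$-measure of full support in $\hat S_i:=S\setminus W_{i+1}$. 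Here we use the dictionary recorded after Definition~\ref{WeakDefn}: $L_i$ is a weakly measured lamination in the completion $S_i$ of $\hat S_i$, and since $W_i\cap\hat S_i=L_i$, its weak measure is just the restriction of $\rho_i$ to transversals lying in $\hat S_i$. In particular, for a transversal $T\subset\hat S_i$ the value $\rho_i(T)$ equals the weak measure of $L_i$ on $T$, and ``$L_i$ is $\reals$-measured at level $i$'' (the condition defining $\FM(S)$ in Theorem~\ref{SpacesThm}) is precisely the statement that this common value is finite, rather than $\infty$, for every $T$ with compact closure in $\hat S_i$.

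For the forward implication, assume (iii$'$), and let $T$ be a transversal with closure compact in $\hat S_i$. Then $\bar T$ is disjoint from the closed set $W_{i+1}$, so $W_{i+1}$ carries no mass on $T$, i.e.\ $\rho_{i+1}(T)=0$; by (iii$'$) this forces $\rho_i(T)\ne\infty$. Hence the weak measure of $L_i$ is finite on every relatively compact transversal of $\hat S_i$, so it is a locally finite transverse $\reals$-measure there, and its full support is inherited from that of $\rho_i$. Conversely, assume every $L_i$ carries a locally finite $\reals$-measure in $\hat S_i$. One implication of (iii$'$), namely ``$\rho_{i+1}(T)>0\Rightarrow\rho_i(T)=\infty$'', is just condition (iii), which holds because $(W_i)\in\F(S)$ (if $T$ is open and $\rho_{i+1}(T)>0$, shrink to a closed subtransversal, apply (iii), and use monotonicity). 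For the remaining implication, suppose $\rho_i(T)=\infty$ for an open transversal $T$. If $T$ did not cross $W_{i+1}$, then $T$ would be a relatively compact transversal of $\hat S_i$, so local finiteness of the weak measure of $L_i$ in $\hat S_i$ would give $\rho_i(T)<\infty$, a contradiction; hence $T$ crosses $W_{i+1}$ and $\rho_{i+1}(T)>0$. This proves the equivalence, and combining it with $\hat S_i=S\setminus W_{i+1}$ gives exactly the stated description of $\FM(S)$ inside $\F(S)\embed\WM(S)^\nat$.

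The step I expect to be the main obstacle is the last implication: making rigorous the assertion that infinite $\rho_i$-mass on an open transversal is \emph{detected} by a transverse intersection with $W_{i+1}$, rather than by a transversal that merely limits onto $W_{i+1}$. The subtlety is the behaviour of $\rho_i$ near the ends of $\hat S_i$, where leaves of $L_i$ may spiral onto leaves of $W_{i+1}$ (which become boundary circles of the completion $S_i$), so that the weak measure of $L_i$, while locally finite on all of $\hat S_i$, is infinite on transversals having an endpoint at $\bdry S_i$. One must check that the bookkeeping of transversals in (iii$'$) (open transversals, endpoints off the lamination) is arranged so that such an endpoint forces an actual crossing of $W_{i+1}$; this is exactly the content of proximality of the representative together with the notion of geometric measure on $\bar L_i$ introduced before Section~\ref{Parameter}, and I would package it as a short lemma on geometric measures and cite it rather than reprove it here.
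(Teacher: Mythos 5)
Your overall route is the same as the paper's: the paper's entire proof is that the corollary ``follows immediately from the proof of Theorem \ref{DefnsThm}'', and you likewise reduce everything to the dictionary $W_i \leftrightarrow L_i=W_i\setminus W_{i+1}$ of Definition \ref{WeakDefn} and to comparing (iii) with (iii$'$). The forward half of your equivalence is fine. The gap is exactly the step you flagged, and it is a real one: your sentence ``if $T$ did not cross $W_{i+1}$, then $T$ would be a relatively compact transversal of $\hat S_i$'' is false, and the implication it supports has a concrete failure mode. Take $W_{i+1}$ containing a closed leaf $\gamma$ and $L_i$ containing a leaf spiraling onto $\gamma$ carrying an atomic (hence locally finite in $\hat S_i$) transverse measure; an open transversal $T$ disjoint from $W_{i+1}$ but with an endpoint of its closure on $\gamma$ meets the spiraling leaf infinitely often, so $\rho_i(T)=\infty$ while $\rho_{i+1}(T)=0$. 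Thus local finiteness of $L_i$ in $\hat S_i$ does not by itself give the ``only if'' half of (iii$'$) for all open transversals.

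Your proposed repair does not close this. Proximality is beside the point (it only rules out empty levels), and there is no stand-alone lemma on geometric measures in the paper to cite: geometric measures are only described informally before Section \ref{Parameter}. What actually resolves the boundary/spiraling bookkeeping is the convention built into the proof of Lemma \ref{DefnsLemma} (and hence into the proof of Theorem \ref{DefnsThm}, which is precisely what the paper's one-line proof points to): transverse $\SB$-measures are evaluated on closed interval transversals, the level is read off from the interior, and the real part is declared infinite whenever an endpoint meets the next-higher-level lamination --- equivalently, $\rho_i$ is the completed, geometric-type measure of $W_i$, infinite on any transversal meeting \emph{or accumulating on} $W_{i+1}$. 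With (iii$'$) read under that convention, your equivalence goes through verbatim; without making that convention explicit, the last implication remains unproved, and it is the only point at which the corollary requires an argument at all.
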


We will make a connection with the space $\M(S)$ of measured laminations in $S$, as described in \cite{UO:Pair}.   Recall that $\M(F)=\M(\bar F,\alpha)$ and the projectivized $\PM(F)=\PM(\bar F,\alpha)$ were calculated in \cite{UO:Pair} with the following result, which is more easily stated for the truncated version of $F$:

\begin{thm}\label{MeasuredSpaceThm} (From \cite{UO:Pair}.) Suppose $(\bar F, \alpha)$ is a connected surface pair satisfying $\Chi_g=\Chi_g(\bar F, \alpha)<0$, with topological Euler characteristic $\Chi(\bar F)=\Chi$.  Suppose $\alpha$ contains $b$ closed curves and $c$ arcs.
Then $\M(\bar F, \alpha)$ is homeomorphic (via a homeomorphism linear on projective equivalence classes) to $\reals^{-3\Chi-b+c}\times [0,\infty]^b=\reals^{-3\Chi_g-c/2-b}\times [0,\infty]^b$, where $[0,\infty]$ denotes $[0,\infty]\subset \bar\reals$, a subset of the extended reals.  Thus $\PM(\bar F, \alpha)$ is homeomorphic to the join of a sphere $S^{-3\Chi-b+c-1}=S^{-3\Chi_g-c/2-b-1}$ and a simplex $\Delta^{b-1}$.
\end{thm}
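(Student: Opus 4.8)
The plan is to reduce the computation of $\M(\bar F,\alpha)$ to a piecewise-linear normal coordinate description, exactly as one computes Thurston's measured lamination space of a closed surface, but with extra care at the interior cusps. First I would fix an ideal cell decomposition $\Delta$ of the surface-with-cusps $S$ associated to $(\bar F,\alpha)$, with ideal vertices at the interior and boundary cusps; this exists because $\Chi_g<0$. Along $\bdry S$ the decomposition restricts to ideal edges with endpoints at the boundary cusps, and each closed boundary component of $S$ receives an auxiliary triangulation (as in \cite{UO:Pair}). Next I would put an arbitrary essential measured lamination $(L,\mu)$ of $(\bar F,\alpha)$ in normal form with respect to $\Delta$: after isotopy, $L$ meets each cell in a finite union of normal arcs, and one records the weight vector $w(L)=\bigl(\mu(e)\bigr)_{e\in E(\Delta)}\in[0,\infty]^{E(\Delta)}$ of $\mu$ on the edges.

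The two facts to prove here are normal form and realization: every $(L,\mu)$ has a normal form, the weight vector $w(L)$ is well defined, and $w$ determines $(L,\mu)$ up to isotopy; conversely every $w$ satisfying the edge-matching equalities and the triangle inequalities within each cell is realized by a unique normal measured lamination. A coordinate $w_e$ takes the value $\infty$ precisely when $e$ abuts an interior cusp into which $L$ spirals. Grouping the edges abutting the $i$th closed curve of $\alpha$ gives a single coordinate with values in $[0,\infty]$: a value in $[0,\infty)$ records the total weight of the family of leaves parallel to that cusp, and the value $\infty$ records a spiralling leaf. The finitely many edges abutting a boundary cusp instead carry only finite weights; the conventions of \cite{UO:Pair} — in particular the treatment of leaves meeting or parallel to $\delta$ — are arranged so that, together, the non-cusp coordinates range over all of $\reals$ rather than over a cone with corners.

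With realization in hand, $\M(\bar F,\alpha)$ is identified with the solution set $Z\subset[0,\infty]^{E(\Delta)}$ of the matching equalities and triangle inequalities. Partitioning according to which normal types occur writes $Z$ as a finite union of top-dimensional convex polyhedral cones glued along faces, one cone for each complete train track carried by $\Delta$. The classical argument — that these cones assemble, away from the cusp directions, into a space PL-homeomorphic to $\reals^{d}$ (equivalently, that its projectivization is a PL sphere $S^{d-1}$) — then applies; and the $b$ cusp coordinates split off as a direct product, because enlarging a cusp-parallel weight, or letting it tend to $\infty$ to produce a spiralling leaf, is independent of the rest of the normal data and has the topology of $[0,\infty]=[0,1]$. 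This gives $\M(\bar F,\alpha)\cong\reals^{d}\times[0,\infty]^b$. An Euler-characteristic count on $\Delta$ — counting the cells, the edges, and the matching relations, and using that each boundary cusp lowers $\Chi_g$ by $\tfrac12$ while interior cusps do not change it — yields $d=-3\Chi-b+c=-3\Chi_g-c/2-b$. Finally, every coordinate is homogeneous of degree one in $\mu$, so the homeomorphism is linear on projective classes and $\PM(\bar F,\alpha)$ is the projectivization of $\reals^{d}\times[0,\infty]^b$, namely the join $S^{d-1}\ast\Delta^{b-1}$.

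\hip\noindent I expect the genuine difficulty to lie not in the combinatorial bookkeeping but in the two places where this goes beyond the closed-surface theory: first, assembling the polyhedral cones into an honest manifold $\reals^{d}$ rather than merely a polyhedral complex; and second, and more seriously, controlling the cusp behaviour — showing that a sequence of laminations whose cusp-parallel weights blow up converges, in the intersection-number topology, to the spiralling lamination, so that the compactification $[0,\infty]$ (and not $[0,\infty)$ or some coarser or finer identification) is exactly the right coordinate, and that these $b$ directions really do form a product factor. That analysis is the heart of \cite{UO:Pair}.
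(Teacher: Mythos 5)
You should first note that the paper you are writing into contains no proof of this statement at all: Theorem \ref{MeasuredSpaceThm} is explicitly imported ``(From \cite{UO:Pair}.)'' and is used in the present paper only as a known input (e.g.\ in the proof of Lemma \ref{InvariantLemma}, where it supplies the Brouwer fixed point argument). So there is no in-paper argument to compare yours against; the comparison has to be with \cite{UO:Pair} itself, which is where the actual work lives.

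Judged on its own terms, your sketch follows the expected route (normal/train-track coordinates, realization, assembly of polyhedral cones, a dimension count giving $-3\Chi-b+c=-3\Chi_g-c/2-b$, homogeneity giving linearity on projective classes), and your Euler-characteristic bookkeeping is consistent with the identity $\Chi_g=\Chi-c/2$ implicit in the statement. But as a proof it is an outline whose two acknowledged gaps are exactly the content of the theorem, and you should not treat them as routine: (1) the topology on $\M(\bar F,\alpha)$ used here is defined by intersection numbers with the curve system $\H$ (see the discussion around Lemma \ref{InjectiveLem}, where spiral geodesics are excluded for measured laminations), so you must prove that your edge-weight parametrization is a homeomorphism for that topology, not just a bijection of polyhedral data; and (2) the compactified factors $[0,\infty]^b$ are indexed by the \emph{closed curves of $\alpha$}, i.e.\ the interior cusps, whereas in this paper's conventions spiraling occurs toward closed curves of $\bdry S$ lying in $\delta$, not toward interior cusps — so your identification of the point $\infty$ in each $[0,\infty]$ factor with ``a spiralling leaf'' needs to be checked against \cite{UO:Pair}; more plausibly it records the (possibly infinite) transverse measure of the family of leaves entering or parallel to that cusp, and the claim that these $b$ coordinates split off as a genuine product factor in the intersection-number topology is a real assertion requiring proof. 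Until those points are supplied, what you have is a plausible reconstruction of the strategy of \cite{UO:Pair}, not an independent proof.
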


\begin{defn}   If $L=(L_0,L_1,\ldots L_h)$ is a finite height lamination, an element of $\F(S)$, let $p(L)=(L_1,\ldots L_h)$. The induced map on $\PF(S)$ is also called $p$.


 \end{defn}
 
 Simply put,  $p$ erases the lowest level lamination and moves higher level laminations down by one level.  Clearly $p^k$ erases the lowest $k$ levels, $i=0,1,\ldots, k-1$.  If we represent a lamination $L$ in terms of weakly measured laminations,
  $L=(W_0,W_1,\ldots, W_h, \emptyset,\emptyset,\ldots)$, then $p(L)=(W_1,\ldots, W_h, \emptyset,\emptyset,\ldots)$.

\begin{proposition}  \label{MeasProp}  If $K$ is a fixed finite height lamination $K=(K_0,K_1,\ldots K_h)\in \F(S)\setminus\{\emptyset\}$, then $p\inverse(K)\subset \F(S)$ is homeomorphic to $\WM(F)$ where $F$ is the completion of $S\setminus K$, and contains a subspace homeomorphic to $\M(F)$.  For $p:\FM(S)\to \FM(S)$, if $K\in \FM(S)\setminus\{\emptyset\}$, $p\inverse(K)$ is homeomorphic to $\M(F)$.  Similar statements hold for projective spaces $\PF(S)$ and $\PFM(S)$.
\end{proposition}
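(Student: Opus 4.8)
The plan is to identify the fiber $p^{-1}(K)$ with the set of ways to build a finite height lamination $L$ whose "top part" above level $0$ is exactly $K$, and to see that this extra data is precisely a weakly measured lamination in the surface that remains after removing $K$. Concretely, write $K=(W_1',\dots,W_{h+1}',\emptyset,\dots)$ in the weakly-measured-lamination bookkeeping of Definition~\ref{WeakDefn}, reindexed so $K_i$ corresponds to $W_{i+1}'$; then an element of $p^{-1}(K)$ is an $L=(W_0,W_1,\dots,W_{h+1},\emptyset,\dots)\in\F(S)$ with $W_i=W_{i+1}'$ for $i\ge 1$. The only unconstrained datum is $(W_0,\rho_0)$, subject to $W_1\subsetneqq W_0$ and condition (iii): $\rho_0(T)=\infty$ whenever $\rho_1(T)>0$. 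Let $F$ be the completion of $S\setminus W_1 = S\setminus K_0$ (equivalently $S\setminus K$, since $K$ is built from $W_1$ and its sublaminations). The claim is that the assignment $(W_0,\rho_0)\mapsto$ (the lamination $W_0\setminus W_1$ with its induced weak measure, viewed in $F$) is a homeomorphism onto $\WM(F)$.

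First I would check this is a well-defined bijection. Given $(W_0,\rho_0)$ satisfying (iii) relative to $K$, the lamination $L_0 = W_0\setminus W_1$ sits in $\hat S_0 = S\setminus W_1$ with no $\bdry$-parallel leaves, and $\rho_0$ restricted to transversals missing $W_1$ is a weak (i.e.\ $[0,\infty]$-valued) transverse measure there; passing to the completion $F$ gives a point of $\WM(F)$. Condition (iii) — that $\rho_0$ blows up on any transversal meeting $W_1$ — is automatically arranged by the geometric-measure/completion conventions recorded after Definition~\ref{FDepthDef}, so it imposes no further constraint on the $\WM(F)$ data: any weak measure on a lamination in $F$ extends back over $S$ with the forced $\infty$'s near $W_1$. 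Conversely, any point of $\WM(F)$ — a weakly measured lamination in $F$ — can be pushed forward to a weakly measured lamination $W_0\supsetneqq W_1$ in $S$ with the canonically $\infty$ behavior on transversals crossing $W_1$, producing a valid $L\in\F(S)$ with $p(L)=K$. These two assignments are mutually inverse, using that $\WM$-laminations have no $\bdry$-parallel leaves so there is no ambiguity in the completion.

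Next I would promote this bijection to a homeomorphism. Both $\F(S)$ and $\WM(F)$ carry subspace topologies from products of copies of $\SB$ (resp.\ $[0,\infty]$) indexed by geodesics, via intersection numbers $i_\theta$. The point is that for $L\in p^{-1}(K)$ and a geodesic $\theta\in\G$, the coordinate $i_\theta(L)$ is determined by $i_\theta$ of the fixed data $K$ together with $i_{\theta'}(L_0)$ for the geodesic $\theta'$ in $F$ corresponding to $\theta$ under the fixed identification of interiors of $S$ and $F$ (geodesics of $S$ meeting $W_1$ contribute the constant value coming from $K$, and those disjoint from $W_1$ read off $L_0$ directly). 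Since this is a continuous, coordinatewise reparametrization in each direction — linear on projective classes, which also handles the projectivized statements $\PF(S)$, $\PFM(S)$ by passing to the quotient — both the map and its inverse are continuous, giving the homeomorphism $p^{-1}(K)\cong\WM(F)$. The subspace homeomorphic to $\M(F)$ is exactly the locally-finite locus inside $\WM(F)$, and by Corollary~\ref{MeasuredRedefineCor} this corresponds to the $\FM(S)$-points of the fiber; when $K\in\FM(S)$ the defining condition (iii$'$) forces $L_0$ to be $\reals$-measured, so $p^{-1}(K)\cap\FM(S)=\M(F)$ on the nose, which is the second assertion.

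The main obstacle is the bookkeeping around completions and the forced infinite measures: one must verify carefully that condition (iii) (or (iii$'$)) is genuinely vacuous as an extra constraint — i.e.\ that every weak (resp.\ locally finite) measure on a lamination in $F$ does extend over $S$ with the prescribed blow-up along $W_1=K_0$, and that distinct extensions don't collapse — and that the intersection-number coordinates transform by a genuine homeomorphism and not merely a continuous bijection. This is where the geometric-measure machinery developed earlier in the paper (and in \cite{UO:Pair}) has to be invoked with care; everything else is a routine translation through Theorem~\ref{DefnsThm} and Corollary~\ref{MeasuredRedefineCor}.
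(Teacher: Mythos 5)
Your proposal is correct and follows essentially the same route as the paper: identify $p^{-1}(K)$ with $\WM(F)$ by sending $(L_0,K_0,\ldots,K_h)\mapsto L_0$, use the $\WM(S)^\nat$ bookkeeping together with the forced infinite measure along $K$ to see the map is a bijection (the paper's inverse is exactly your extension $L\mapsto K\cup L$ with infinite measure on $K$), and get continuity in both directions from the intersection-number coordinates, with geodesics in $F$ reading off $L_0$ and the remaining coordinates constant equal to those of $K$; the $\M(F)$, $\FM(S)$, and projectivized statements then follow as you indicate. Apart from a harmless off-by-one in your reindexing of $K$'s entries, this matches the paper's argument.
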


The proposition shows that our spaces of finite height laminations contain subspaces of measured laminations whose topological types we know. 

\begin{corollary}  $\F(S)$ contains a subspace homeomorphic to $\M(S)$, namely the subspace of total height 0 laminations in $\F(S)$.  Similar statements can be made for $\PF(S)$, $\FM(S)$, and $\PFM(S)$.
\hip\noindent For every $K\in \F(S)$, $ \F(S)$ contains a subspace homeomorphic to $\M(F)$, where $F$ is the completion of $S\setminus K$.  Similar statements can be made for $\PF(S)$, $\FM(S)$, and $\PFM(S)$.
\end{corollary}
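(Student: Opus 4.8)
The plan is to deduce both assertions from Proposition \ref{MeasProp}, using only the transitivity of the relation ``is homeomorphic to a subspace of'': if $A$ is homeomorphic to a subspace of $B$ and $B$ is homeomorphic to a subspace of $C$, then $A$ is homeomorphic to a subspace of $C$.

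I would treat the second assertion first. Fix $K\in\F(S)$ with $K\ne\emptyset$, and let $F$ be the completion of $S\setminus K$. Proposition \ref{MeasProp} gives a homeomorphism $p\inverse(K)\cong\WM(F)$ and exhibits inside $p\inverse(K)$ a subspace homeomorphic to $\M(F)$; since $p\inverse(K)$ is a subspace of $\F(S)$, transitivity yields an embedding $\M(F)\embed\F(S)$, which is the claim. The measured clause of Proposition \ref{MeasProp} (in which $p\inverse(K)$ is already homeomorphic to $\M(F)$) gives the statement for $\FM(S)$, and simultaneously projectivizing the measures $\rho_i$ gives the statements for $\PF(S)$ and $\PFM(S)$, with $\M(F),\WM(F)$ replaced by $\PM(F),\PWM(F)$. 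The only value of $K$ not reached in this way is $K=\emptyset$, where $F=S$; that case is exactly the first assertion, which I take up next.

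For the first assertion I would identify the total height $0$ locus of $\F(S)$ using Definition \ref{WeakDefn}: it consists of the tuples $(W_0,\emptyset,\emptyset,\ldots)$ with $W_0\ne\emptyset$, and for these condition (iii) is vacuous, since $\rho_1$ is identically zero. Thus this locus is the fibre $p\inverse(\emptyset)$; it is manifestly a copy of $\WM(S)$ inside $\WM(S)^{\nat}$, and the argument of Proposition \ref{MeasProp} applies verbatim with $K=\emptyset$, $F=S$ to show that this identification is a homeomorphism onto a subspace of $\F(S)$. Since $\WM(S)$ contains $\M(S)$ as a subspace (noted in the discussion preceding Definition \ref{WeakDefn}), $\F(S)$ contains a subspace homeomorphic to $\M(S)$ sitting in the total height $0$ locus; projectivizing $\rho_0$ gives the analogue for $\PF(S)$, with $\PM(S)\subset\PWM(S)$. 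For $\FM(S)$ the identical argument with condition (iii$'$) in place of (iii) forces $\rho_0$ to be $\reals$-valued, so the total height $0$ locus of $\FM(S)$ is already $\M(S)$ itself, which (after projectivizing) also settles $\PFM(S)$.

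The assembly is routine once Proposition \ref{MeasProp} is in hand; the one point needing genuine, if minor, care is the degenerate case $K=\emptyset$ of that proposition --- checking that its construction of $p\inverse(K)\cong\WM(F)$ specializes correctly, as a homeomorphism and not merely a set bijection, to $p\inverse(\emptyset)\cong\WM(S)$ inside $\F(S)$. One should also keep straight that the full total height $0$ locus of $\F(S)$ is $\WM(S)$, and that it narrows to $\M(S)$ only on passing to $\FM(S)$ (equivalently, to $\reals$-measured representatives). I anticipate no essential difficulty beyond this bookkeeping.
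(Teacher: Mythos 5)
Your deduction is correct and matches the paper's (implicit) argument: the paper gives no separate proof, treating the corollary as an immediate consequence of Proposition \ref{MeasProp} together with the already-noted inclusion $\M(S)\subset\WM(S)$. Your extra care with the degenerate case $K=\emptyset$, and your observation that the full total height $0$ locus of $\F(S)$ is $\WM(S)$ (narrowing to $\M(S)$ only in $\FM(S)$), are accurate refinements of the stated corollary rather than a departure from the paper's route.
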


We will prove a result about finite height invariant measured laminations for automorphisms of surfaces.  The following theorem is not a significant improvement on the Nielsen-Thurston theory of automorphisms of surfaces, but its proof might serve as a model for constructing invariant finite height measured laminations in other settings.  

\begin{thm} \label{InvariantThm} Suppose $S$ is a surface with cusps satisfying $\chi_g(S)<0$.  Suppose $f:S\to S$ is an orientation preserving automorphism (self-homeomorphism up to isotopy), and suppose that if $S$ is closed then $f$ is not periodic.   Then there exists an invariant finite height measured lamination $L=\bigcup_{j=0}^hL_j$, where $L_j$ has transverse measure $\mu_j$, and there exist eigenvalues $\lambda_j> 0$ such that $f((L_j,\mu_j))=(L_j,\lambda_j\mu_j)$.  Further, the lamination $L$ fills the surface, in the sense that completions of components of $S\setminus L$ are disks with boundary cusps.  
\end{thm}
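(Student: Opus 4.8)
The plan is to run the classical Nielsen--Thurston trichotomy and, in the reducible case, extract the finite height structure by an inductive "peeling" procedure that produces one transversely measured level at a time, each level living on a cusped subsurface of decreasing complexity. First I would invoke Nielsen--Thurston: up to isotopy, $f$ is either pseudo-Anosov, periodic, or reducible. Since $f$ is assumed non-periodic when $S$ is closed, and when $S$ has cusps we may work with the cusped structure directly, the periodic case is either excluded or handled by passing to a power and noting that a periodic map on a cusped surface with $\chi_g<0$ still has a reduction along its fixed-point-free dynamics; in any case the only substantive cases are pseudo-Anosov and reducible. If $f$ is pseudo-Anosov on $S$ (or on $\DS$), then the unstable measured foliation/lamination $L_0$ is $f$-invariant with $f((L_0,\mu_0))=(L_0,\lambda_0\mu_0)$, $\lambda_0>1$, it fills $S$, and we take $h=0$: the conclusion holds with a single level. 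This is the base case.

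Next I would treat the reducible case. Let $\mathcal{C}$ be the canonical (Nielsen--Thurston) reducing system: a finite $f$-invariant multicurve, which we realize geodesically. A power $f^k$ fixes each component of $\mathcal{C}$ and each complementary piece, but I do not want to pass to a power permanently, so instead I take the $f$-orbit of the whole configuration. The multicurve $\mathcal{C}$, with an invariant transverse measure (counting measure, or a weighted counting measure chosen so that $f$ scales it --- here since $f$ permutes the curves, equal weights on each orbit give $\lambda=1$ on that piece, or we allow the eigenvalue to be $1$), becomes a candidate for the \emph{lowest} level $L_0$: it is a one-manifold lamination, transversely $\reals$-measured, with no boundary-parallel leaves after we discard inessential components. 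Cutting along $\mathcal{C}$ yields $\hat S_1 = S\setminus L_0$; its completion $S_1$ is a surface with cusps (the cut curves become boundary, and some leaves of the next level will spiral into them, exactly as described in the discussion after Definition \ref{FDepthDef}), and $\chi_g(S_1)<0$ on each component of negative complexity (disk and annulus pieces are absorbed into the filling condition or contribute closed boundary leaves). The map $f$ induces an automorphism $f_1$ on $S_1$ (possibly permuting components). By induction on $-\chi_g$, $f_1$ admits an invariant finite height measured lamination $\bigcup_{j=1}^{h}L_j$ on $S_1$ with eigenvalues $\lambda_j$, filling $S_1$. Reassembling, $L=L_0\cup\bigcup_{j=1}^h L_j$ is the desired invariant finite height measured lamination on $S$ in the sense of Definition \ref{FDepthDef} / Proposition \ref{PlanteProp}, with $f((L_j,\mu_j))=(L_j,\lambda_j\mu_j)$.

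The filling claim I would get as follows: at each stage, the pseudo-Anosov pieces are filled by construction, and the annular pieces around curves of the reducing system contribute leaves of $L_0$ (or a later $L_j$) that, together with the higher levels spiraling toward them, leave only disk-with-boundary-cusp complementary regions; formally, a complementary region $R$ of $L$ that is not such a disk would give a proper essential arc or curve disjoint from every $L_j$, which would have been absorbed into some $L_j$ by essentiality (or would force $\chi_g(R)\ge 0$ in a way contradicting that we peeled maximally). To make "peeled maximally" precise I would, at each inductive step, take $L_0$ to be not just $\mathcal{C}$ but the maximal $f$-invariant transversely measured lamination with no boundary-parallel leaves that can sit at the bottom --- this is where the \emph{proximal} / \emph{contiguous representative} language from the paper does real work, and I would cite it rather than re-derive it.

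The main obstacle I anticipate is the bookkeeping at the interface between levels: controlling how leaves of $L_1,\ldots,L_h$ spiral into the boundary curves of $S_1$ created by cutting along $L_0$, and checking that the resulting $\bar L_i = L_i \cup \partial S_i$ genuinely satisfies the completion conditions (iv)--(v) of Definition \ref{FDepthDef}, i.e.\ that the induced transverse measures are geometric measures (locally finite in $\hat S_i$, infinite on transversals hitting $\partial S_i$) and that $f$ still scales them. A secondary subtlety is ensuring that when $f$ permutes complementary components nontrivially, the eigenvalue on each orbit is well-defined --- this forces the eigenvalue to be a common value $\lambda_j$ across an orbit, which is consistent with the statement, but requires choosing the measure $\mu_j$ compatibly across the orbit before asserting $f((L_j,\mu_j))=(L_j,\lambda_j\mu_j)$. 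Neither obstacle is conceptually deep; both are handled by the induction on $-\chi_g(S)$ together with the structural results (Proposition \ref{PlanteProp}, Theorem \ref{DefnsThm}) already established.
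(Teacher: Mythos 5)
Your overall skeleton (find one invariant piece, cut, recurse, bound the depth) matches the paper's, but as written there are two genuine gaps. First, you have the level hierarchy upside down. In Definition \ref{FDepthDef} the lamination $L_i$ lives in $\hat S_i=S\setminus\bigcup_{j>i}L_j$, and it is the \emph{lower}-level leaves that spiral to closed curves of $\bdry S_i$, i.e.\ to curves coming from the \emph{higher}-indexed laminations; local finiteness of each level (Definition \ref{FiniteDepthMeasureDef}(iv), equivalently Definition \ref{WeakDefn}(iii)) forces this, because a transversal through a reducing curve meets the lamination spiraling onto it in a set of infinite transverse measure, so the spiraling lamination must sit strictly \emph{below} the curve it spirals to. You place the reducing multicurve at the lowest level $0$ and let the laminations at levels $1,\dots,h$ spiral into it; then $L_1$ is not even closed in $\hat S_1$ (it accumulates on the level-$0$ curves, which are still present in $\hat S_1$), and its transverse measure is not locally finite there, so the object you assemble is not a finite height measured lamination in the sense of the paper. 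The correct bookkeeping, which is what the paper's proof does, is the reverse: the invariant lamination found on all of $S$ (reducing system or pseudo-Anosov lamination) sits at the top of the hierarchy, and the invariant laminations found after cutting sit at successively lower levels, spiraling up toward the levels above them.

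Second, your use of the Nielsen--Thurston trichotomy does not cover the cases the theorem actually requires. Periodicity is excluded only when $S$ is closed; for a cusped surface and a finite-order $f$, the trichotomy yields neither a pseudo-Anosov lamination nor, in general, a reducing multicurve, and your parenthetical about ``passing to a power'' does not produce an $f$-invariant lamination (an $f^k$-invariant object need not be $f$-invariant, and you give no descent argument). The paper avoids the trichotomy for non-closed $S$ altogether: Lemma \ref{InvariantLemma} shows $\PM(S)$ (or $\PM(\breve S)$) is a ball by Theorem \ref{MeasuredSpaceThm}, so the Brouwer fixed point theorem gives a projectively invariant measured lamination for \emph{every} orientation-preserving automorphism, with the trim-annulus case handled by an explicit invariant family of spiraling geodesics; this lemma is then applied inductively to orbits of complementary components, and the process terminates by the height bound of Lemma \ref{DepthBoundLemma} exactly when all complementary completions are cusped disks --- which is also where the filling statement comes from, rather than from the maximality/contiguity appeal you sketch. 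Without a replacement for Lemma \ref{InvariantLemma} in the periodic cusped case, and without inverting your level assignment, the proposal does not prove the theorem as stated.
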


In Section \ref{TreesSection} we investigate $\SB$-trees.   There are two notions of $\SB$-trees:  an $\SB$-tree is an order tree with compatible $\SB$-measures on segments; a ``metric $\SB$-tree" also admits an $\SB$-metric.  We investigate the connection between finite height laminations and actions on $\SB$-trees.  For metric $\SB$-trees, we must modify the laminations slightly.  Let $(L,\nu)$ be a finite height lamination in a surface $S$ (with boundary and cusps), then we modify $(L,\nu)$ in a standard way to obtain another $\SB$-lamination $(\bar L,\bar \nu)$, which we call a {\it geometric $\SB$-lamination}.

\begin{thm} \label{TreeThm} Let $(L,\nu)$ be a finite height lamination in a surface $S$ (with boundary and cusps) and let $(\bar L,\bar \nu)$ be the corresponding geometric lamination.  The lift $(\tilde L,\tilde \nu)$ is dual to an $\SB$-tree $\T$ and there is an action of $\pi_1(S)$ on $\T$ which preserves the measure.  

The lift $(\tilde {\bar L},\tilde{ \bar\nu})$ of $(\bar L,\bar \nu)$ to the universal cover $\tilde S$ of $S$ is dual to a metric $\SB$-tree $\bar \T$.  $\bar \T$ is an $\SB$-metric space, and there is an action of $\pi_1(S)$ on $\bar\T$ which preserves the metric.  
\end{thm}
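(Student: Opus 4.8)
The plan is to imitate the classical construction of the $\reals$-tree dual to a measured lamination, carried out in the universal cover and with the ordered semi-ring $\SB$ in place of $[0,\infty)$. First I would lift $(L,\nu)$ to $(\tilde L,\tilde\nu)$ in $\tilde S$ and define, for $x,y\in\tilde S$,
\[
d(x,y)=\inf\{\tilde\nu(\gamma) : \gamma \text{ a path from } x \text{ to } y \text{ transverse to } \tilde L\},
\]
where $\tilde\nu(\gamma)\in\SB$ is the total variation of the pulled-back $\SB$-measure along $\gamma$. Because $\SB$ is an ordered semi-ring, concatenation of paths corresponds to addition and the order allows infima, so $d$ is a pseudo-$\SB$-metric: symmetry and $d(x,x)=0$ are immediate and the triangle inequality comes from concatenating paths. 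Since $\tilde L$ and $\tilde\nu$ are lifts, every deck transformation $g\in\pi_1(S)$ satisfies $g_*\tilde\nu=\tilde\nu$, hence $d(gx,gy)=d(x,y)$, and the $\pi_1(S)$-action on $\tilde S$ descends to the quotient by the relation $d(x,y)=0$.

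For the order-tree statement I would not metrize at all: the leaf space of the lift of an essential lamination in a surface to its universal cover is an order tree, by the usual argument (collapse complementary regions and leaves; planarity of $\tilde S$ and the cyclic order on its ends supply the order-tree axioms). Call it $\T$, with quotient map $q\colon\tilde S\to\T$. The finite-height stratification $\tilde L_0\subset\tilde L_0\cup\tilde L_1\subset\cdots$ refines this: collapsing, level by level, the complementary regions and leaves of each completion $\tilde{\bar L}_i$ inside the completion of the previous complement yields a nested family of segments, and to a segment of $\T$ I assign the $\SB$-measure $\nu(T)$ of any transversal $T$ that $q$ carries onto it. What makes this well defined is that any path in $\tilde S$ can be tightened to an \emph{efficient} path, meeting every leaf of every $\tilde{\bar L}_i$ monotonically and minimally; efficient paths realize the infimum in the definition of $d$, so the assignment of $\SB$-measures to segments is consistent and $q$ has the dual-tree property (point-preimages are leaves or complementary regions, transversal measure equals segment measure). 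Hence $(\tilde L,\tilde\nu)$ is dual to the $\SB$-tree $\T$, and $\pi_1(S)$ acts preserving the $\SB$-measure.

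For the metric $\SB$-tree I replace $(L,\nu)$ by the geometric lamination $(\bar L,\bar\nu)$: the standard modification (blowing up each weighted isolated leaf to a foliated band, and representing $\bdry$-parallel leaves of the completion by horocyclic segments as in the dual-surface picture) makes $\bar\nu$ non-atomic along transversals, so $d=d_{\bar\nu}$ now separates points in the sense that $d(x,y)=0$ exactly when $x,y$ lie in a common leaf or complementary region of $\tilde{\bar L}$. The quotient $\bar\T$ is then a genuine $\SB$-metric space. To see $\bar\T$ is a metric $\SB$-tree in the sense of Section \ref{TreesSection}, I would check $\SB$-geodesicity (efficient paths project to $\SB$-geodesics) and $0$-hyperbolicity level by level: the depth-$0$ part gives an honest $[0,\infty]$-tree (the height-zero part of $\SB$) by the classical dual-tree argument applied to the weakly measured lamination $\bar L_0$, where a transversal meeting the locally-infinite part has $\SB$-distance with a strictly larger leading term; and, after rescaling by the $\SB$-generator of the next level, each complementary piece carries the metric $\SB$-tree of the induced height-$(h-1)$ lamination in the completion of its complement, which exists by induction on height, the base case being the weakly measured case and the inductive bundle structure supplied by Proposition \ref{MeasProp}. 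The $\pi_1(S)$-action is by $\SB$-isometries exactly as before.

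The hard part will be the verification that these quotients really are $\SB$-trees, not merely $\SB$-metric (or $\SB$-measured order) spaces — that is, $0$-hyperbolicity together with existence and uniqueness of $\SB$-geodesics — done uniformly across the finitely many levels and in the presence of $\infty\in\SB$. This rests on two points. First, one needs efficient paths that are simultaneously efficient at every level: minimizing crossings with $\tilde L_0$ must not obstruct minimizing crossings with $\tilde L_1$ inside a component of the complement of $\tilde L_0$, which follows from the nesting $\hat S_{i+1}\subset\hat S_i$ and the absence of $\bdry$-parallel leaves of each $\bar L_i$ in its completion from Definition \ref{FDepthDef}. Second, one needs the $\SB$-arithmetic to be ``level-clean'': a nonzero contribution to $\tilde\nu(\gamma)$ from level $i$ forces an infinite contribution from every level $<i$ (conditions (iii)/(iii$'$) of Definition \ref{WeakDefn}), so that sums and infima never ``carry'' information between levels and the real-case arguments apply one level at a time. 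With efficient paths in hand, geodesicity, $0$-hyperbolicity, and completeness of $\bar\T$ reduce to their classical counterparts level by level, completing the proof.
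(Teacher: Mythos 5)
Your overall route is the paper's: take the leaf space of $\tilde{\bar L}$ (closures of complementary regions together with non-boundary leaves), give its segments the lifted transverse $\SB$-measure, use the geometric modification $(\bar L,\bar\nu)$ to kill atoms, and let deck transformations act. But as written, the proposal inverts where the work lies. The step you dismiss as ``the usual argument'' --- that this leaf space, with segments coming from efficient transversals, satisfies the order-tree axioms --- is exactly what the paper proves, and it is the only substantive verification in its proof of Theorem \ref{TreeThm}: axioms (ii) and (iii) are established by a geodesic-rectangle argument in $\tilde S$ (bound a rectangle $\Delta$ by two transversal geodesic segments $\gamma,\beta$ and two arcs $\rho,\omega$ lying inside a leaf or complementary region; leaves meeting $\Delta$ cannot exit through $\rho$ or $\omega$, so they cross $\gamma$ if and only if they cross $\beta$, plus a supremum argument producing the maximal point $w$ with $[x,y]\cap[x,z]=[x,w]$). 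In the present setting (leaves spiraling into $\bdry S$, locally infinite pieces $Y_i$, the added boundary families of the geometric completion) you cannot simply cite closed-surface folklore; your write-up needs this argument or an equivalent, and it omits it.

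Conversely, the part you flag as ``the hard part'' --- $0$-hyperbolicity, existence and uniqueness of $\SB$-geodesics, an induction on height invoking Proposition \ref{MeasProp} --- is not required by the statement under the paper's definitions and is not how the paper proceeds. A metric $\SB$-tree is \emph{defined} there as an $\SB$-tree (an order tree with compatible segment measures) whose measure has full support and is non-atomic; once the order-tree axioms are in place, $d(x,y)=\nu([x,y])$ is an $\SB$-metric by a two-line computation: $[x,y]\cap[x,z]=[x,w]$ gives $[y,z]=[y,w]\cup[w,z]$, hence $\nu([y,z])=\nu([y,w])+\nu([w,z])\le\nu([y,x])+\nu([x,z])$. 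So no infimum over paths, no metric geodesicity, and no hyperbolicity inequality has to be checked; correspondingly, your pseudometric $d(x,y)=\inf\tilde\nu(\gamma)$ and the asserted-but-unproved claim that efficient paths realize that infimum simultaneously at every level (and in the presence of $\infty\in\SB$) are machinery you neither need nor supply proofs for. The measure-preserving (resp.\ isometric) action of $\pi_1(S)$ then comes for free because $\tilde{\bar\nu}$ is a lift, as you say. Reallocate the effort: prove the order-tree axioms for the dual leaf space, note that invariance of the lifted measure gives well-defined segment measures, and invoke non-atomicity of $\bar\nu$ (supplied by the geometric modification) for the metric statement.
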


One piece of unfinished business here is to obtain a finite height lamination from an action of $\pi_1(S)$ on an $\SB$-tree.  

The following are motivations and goals related to this paper:

\hop
(1) A natural goal is to extend the space $\PF(S)$ to make a connection with hyperbolic geometry.  We have designed our finite height lamination spaces to make the enlargement possible.  Let us call the proposed enlarged space {\it the finite height Teichm\"uller space of $S$, $\FT(S)$}.

We give a {\em highly speculative} description of the space here.  A point $H$ in this space will be represented by the projective class of a finite height measured lamination
$\displaystyle L=\bigcup_{j=1}^hL_j$, together with a (not necessarily complete) hyperbolic structure $h$ on the dual $\breve F$ of the completion $F$ of $S\setminus L$. As usual, two hyperbolic structures on $F$ are considered equivalent if one can be pulled back from the other by a homeomorphism isotopic to the identity.  Note that we have put the hyperbolic structure at level 0 now, and the finite height measured lamination occupies levels 1 to $h$.  

When a hyperbolic structure on $\breve F$ degenerates, we obtain a measured lamination in $F$.  So we see that a point in $\FT(S)$ can be thought of as the result of repeated degenerations:  Starting with a hyperbolic structure on $\breve S$, a degeneration of the hyperbolic structure yields a measured lamination $L_1$ in $S$.  Next suppose that we consider a degeneration of the hyperbolic structure on the dual $\breve F$ of the completion $F$ of $S\setminus L_1$.  We relabel $L_1$ calling it $L_2$ instead, pushing it to a higher level, and we obtain a new measured lamination $L_1$ in $F$.  Then we redefine $F$ as the completion of $S\setminus(L_1\cup L_2)$ and consider a degeneration of $\breve F$ again, and so on.   

Either $\FT(S)$ should contain $\PF(S)$, or possibly $\PFC(S)$.

The required technicalities for establishing the existence of these spaces may turn out to be fairly routine.  In order to describe topologies, one again defines intersection/length functions $i_\theta$, $\theta\in \G$ which assigns a ``length" in $\SB$ to every $\theta\in \G$ viewed as a curve in $H$, which is a multiply degenerate hyperbolic surface.  For example, if a non-oriented $\gamma\in \G$ lies entirely in $F$, we define $i_\gamma(H)$ to be the element of $\SB$ such that the level $\level(i_\gamma(H))=0$ and the real part $\real(i_\gamma(H))$ is the length of the curve in $\breve F$, see Definition \ref{BLDefn}.  If a non-oriented curve intersects $L$ transversely, then the intersection number has level equal to the maximum level $m$ of the $L_i$'s intersected, with real part equal to the intersection of $\gamma$ with $L_m$.

 The degeneration of $F$ can be understood (except when a component of $F$ is closed) using a method due to William Thurston, depending on a choice of ideal triangulation of $\breve F$.  Each ideal triangle is equipped with the standard partial horocyclic foliation.   Combining these, one obtains a partial foliation for all of $\breve F$.  Removing horocycles and horocycle segments, one obtains a measured lamination $\breve L_i$ in $\breve F$.  The lamination may have boundary in $\bdry \breve F$, and it has noncompact leaves only if these end in interior cusps of $\breve F$, which can only happen if the hyperbolic structure on $\breve F$ is non-complete.   Dualizing, we obtain a measured lamination in $F$;  leaves ending in a cusp of $\breve F$ become leaves that spiral towards a closed leaf of $\bdry F$, with the same sense of spiraling as in $F$.   A boundary point of a leaf in $\bdry \breve F$, becomes an end of a leaf in a cusp or boundary cusp in $F$.   We observe that allowing measured laminations with leaves spiraling to curves of $\bdry F$ corresponds to allowing non-complete hyperbolic structures for $\breve F$.
 
Finally we observe that an element $H$ of $\FT(S)$ corresponds to an action of $\pi_1(S)$ on an object which is a hybrid of a tree and hyperbolic space.  The element $H$ is a finite height measured lamination $L$ in $S$ together with a hyperbolic structure on the dual $\breve F$ of the completion of $S\setminus L$.  Lifting the lamination to the universal cover $\tilde S$, we have a finite height measured lamination $\tilde L$ in $\tilde S$, together with a hyperbolic structure on the completion of the dual of $\tilde S\setminus \tilde L$.  Now we replace the completion of each component of $\tilde S\setminus \tilde L$ by its dual surface compactified by adding one point at each cusp, and insert these dual surfaces into the $\SB$-tree dual to $\tilde L$ in $\tilde S$ at the points of the $\SB$-tree dual to $\tilde L$ at the appropriate points of the dual tree.   We will describe an analogous insertion of subtrees in Section \ref{TreesSection}.  The resulting tree-like object is an $\SB$-metric space, and the action of $\pi_1(S)$ of course respects this metric.

\hop
(2) The theory of measured lamination spaces in 3-manifolds, see \cite{AH:MeasuredLaminations} and \cite{UO:MeasuredLaminations}, is incomplete and not satisfying.  It may be easier to understand spaces of finite height (measured) laminations in 3-manifolds.
\hop
(3) In a weak sense the 2-dimensional invariant laminations for generic automorphisms of handlebodies, see \cite{UO:Autos}, \cite{LC:Tightness}, yield finite height measured invariant laminations.  Here the level 0 lamination is 1-dimensional and contained in the boundary of the handlebody , an invariant measured lamination for the pseudo-Anosov restriction to the boundary, while the level -1 lamination is 2-dimensional and is embedded in the interior of the handlebody.  For reducible automorphisms of handlebodies there may be examples of invariant finite height measured laminations of arbitrarily large height.  
\hop
(4) Finite height measured laminations appear implicitly in the theory of $\text{Out}(F_n)$, the group of automorphisms of a free group of rank $n$.  It might be possible to reformulate this theory by using a version of Culler-Vogtmann Outer Space corresponding to graphs with $\SB$-metrics  rather than ordinary $\reals$-metrics.

\hop
(5) For our purposes in this paper, $\SB$ is the best abelian semi-group to use as a transverse structure or parameter space.   In fact, there is a wide choice of abelian semi-groups which could be used in this way, and one can investigate the associated laminations, metric spaces, trees, and actions on trees, see \cite{UO:Order}.  Actions on $\Lambda$-trees, where $\Lambda$ is an ordered abelian group, have been studied extensively.  By using ordered abelian semi-groups instead, we lose algebra but gain flexibility.

\section{A parameter space $\SB$ and finite height $\SB$-measures}\label{Parameter}

In order to define topologies on $\F(S)$, $\FC(S)$, $\PF(S)$, and $\PFC(S)$ we will use a certain parameter space, which we call $\SB$.

\begin{defn}\label{BLDefn}  Let $\SB$ denote the set $\{0\}\cup(\nat\times (0,\infty])$ where $(0,\infty]\subset\bar \reals$ and where $\bar\reals=[-\infty,\infty]$ denotes the extended real line.  We use the
lexicographical order relation on $\nat\times (0,\infty]$, so that $(i,t)<(j,s)$ if either $i<j$ and $s,t\in (0,\infty]$ or $i=j$ and $t<s$.   The element $0\in\SB$ is a least element, $0<(i,t)$ for all $(i,t)\in(\nat\times (0,\infty])$
We make $\SB$ a topological space with the order topology. 
 
 The elements $(i,\infty)$ are called {\it infinities} of $\SB$.  Define a commutative {\it addition} operation 
on $\SB$ by $$(i,t)+(j,u)
=\begin{cases}(i,t+u) & \mbox{if } i=j \\
                     ( i,t) & \mbox{if } i>j 
                                            \end{cases}
$$
$$    (i,t)+0=(i,t),$$
\noindent with the convention $\infty+a=a+\infty=\infty$ for any $a\in (0,\infty]$. Define a commutative {\it multiplication} on $\SB$ by 
$$(i,t)(j,u)=(i+j, tu),$$ $$0(i,t)=(i,t)0=0,$$ with the convention that $a\infty=\infty a=\infty$ for any $a\in (0,\infty]$.  There is a {\it scalar multiplication}:  If $\lambda\in (0,\infty]\subset \bar\reals$, and $x=(i,t)$, then $\lambda x=(i,\lambda t)$.

We say $(i,t)\in (\nat\times (0,\infty])$ is {\it real} if $i=0$.  When we denote an element of $\SB$ by a single symbol $x=(i,t) \in \SB\setminus \{0\}$, we will use $\level(x)=i$ to denote the {\it level of $x$} and $\real(x)=t$ to denote the {\it real part of $x$}, which lies in $(0,\infty]$.   We make the convention that $\real(0)=0$ and $\level(0)$ is undefined.

We define {\it vectors} in $\SB^n$ as $n$-tuples of elements of $\SB$, and give $\SB^n$ the product topology.   We allow $n$ infinite, giving an infinite product.   If $\lambda\in\SB$ and $w\in \SB^n$, $w=(w_1,\ldots,w_n)$, then
we define {\it scalar multiplication} by $\lambda w=(\lambda w_1, \lambda w_2,\ldots, \lambda w_n)$.   
The {\it lattice points} of $\SB^n$ are vectors of the form $[(i_1,\infty),(i_2,\infty),\ldots ,(i_n,\infty)]$; we use $Z$ to denote the set of lattice points in $\SB^n$.   The {\it origin} is the vector with all entries 0.
A {\it cone} in $\SB^n$ is a subset of $\SB^n$ closed under
scalar multiplication by $\lambda\in \SB$.
\end{defn}

We observe that the subset of $\SB$ which we identify with the positive extended reals, namely $\{(0,t)\in \SB:t \in (0,\infty]\}=\{x\in \SB:\level(x)=0\}$, has the usual addition and multiplication of the positive extended reals, and also 
has the usual topology.
Scalar multiplication by $\lambda\in \SB$ shifts the levels of all entries of a vector by the same integer.  Scalar multiplication by a real does not shift levels of entries.  We have used notation for finite products, but everything applies for infinite products  as well.

It is routine to verify the following:

\begin{lemma} $\SB$ is an ordered semi-ring with multiplicative identity $(0,1)$ and additive identity $0$.
\end{lemma}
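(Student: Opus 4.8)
The plan is to verify the semi-ring axioms one at a time, the only delicate points being commutativity and associativity of the two operations, distributivity, and the compatibility of the order with addition and multiplication. First I would record that $(0,1)$ is a multiplicative identity: for any $(i,t)$ we have $(0,1)(i,t)=(0+i,1\cdot t)=(i,t)$, and $0\cdot(0,1)=0$ by definition; similarly $0$ is an additive identity since $(i,t)+0=(i,t)$ and $0+0=0$ are declared in Definition \ref{BLDefn}. Commutativity of both operations is immediate from the formulas, which are manifestly symmetric once one notes that the case split ``$i=j$'' versus ``$i>j$'' in the definition of addition should be read together with its mirror ``$i<j$''; the convention $\infty+a=a+\infty=\infty$ is itself symmetric, so no ambiguity arises.

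Next I would check associativity of addition. Given $(i,t),(j,u),(k,v)$, the sum in either grouping picks out the summands whose first coordinate equals $m:=\max\{i,j,k\}$ and adds their real parts in $(0,\infty]$ (using $\infty+a=\infty$); since addition on $(0,\infty]$ is associative and the ``take the max level'' rule is associative, both groupings yield $(m,\text{(sum of real parts at level }m))$, or the appropriate value if some summand is $0$. Associativity of multiplication is even easier: $\bigl((i,t)(j,u)\bigr)(k,v)=(i+j+k,\,tuv)=(i,t)\bigl((j,u)(k,v)\bigr)$, using associativity of $+$ on $\nat$ and of multiplication on $(0,\infty]$ together with $a\infty=\infty$; the cases involving the absorbing element $0$ are handled by $0\cdot x=x\cdot 0=0$. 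For distributivity, $(i,t)\bigl((j,u)+(k,v)\bigr)$ versus $(i,t)(j,u)+(i,t)(k,v)$: multiplying by $(i,t)$ shifts every level up by $i$ and scales every real part by $t$, and both the ``max level'' selection in the sum and the addition of real parts commute with this shift-and-scale, so the two sides agree; once more the $0$ cases are trivial. One should also note $0$ is absorbing for multiplication so that $x\cdot 0 = 0 = (\text{anything})\cdot 0$, consistent with distributing over $x+0=x$.

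Finally I would verify the order-compatibility making $\SB$ an \emph{ordered} semi-ring: if $x\le y$ then $x+z\le y+z$, and if $x\le y$ and $0\le z$ then $xz\le yz$. For addition this follows because increasing a summand either raises the max level (giving a strictly larger sum) or, at equal levels, increases the real part monotonically; one checks the boundary behavior with $0$ and with $\infty$ separately, but these are immediate from the lexicographic order and $\infty$ being the top of $(0,\infty]$. For multiplication, $xz=(\level x+\level z,\ \real x\cdot\real z)$ and the map $(i,t)\mapsto(i+\level z,\ t\cdot\real z)$ is order-preserving on $\SB$ since it adds a constant to the level and multiplies the real part by a nonnegative constant, both monotone operations; the case $z=0$ gives $xz=0=yz$, and $x=0$ gives $0\le yz$. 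I expect the main obstacle — really the only thing requiring care rather than inspection — to be the bookkeeping across the three regimes (a summand equal to $0$, a real part equal to $\infty$, and the strict-versus-nonstrict behavior of the lexicographic order at equal levels), so I would organize the verification as a short case analysis isolating those boundary cases once and reusing it for each axiom.
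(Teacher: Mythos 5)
Your verification is correct, and since the paper simply states that the lemma is "routine to verify" and gives no argument, your case-by-case check of the semi-ring and order axioms (including the boundary cases with $0$ and $\infty$) is exactly the intended proof. No comparison to a different method is needed, as there is no alternative route in the paper.
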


Viewing $\SB$ as a topological space with the order topology, we can picture it as shown in Figure \ref{DepthParameter}.

\begin{figure}[ht]
\centering
\scalebox{1}{\includegraphics{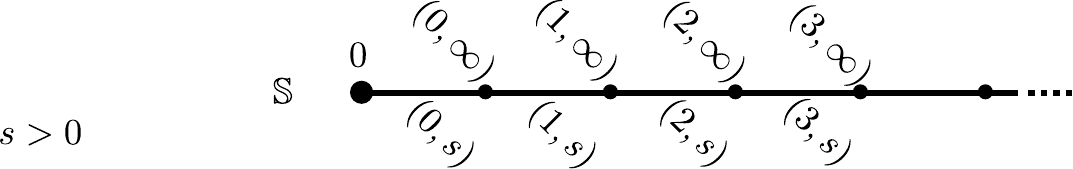}}
\caption{\small The ordered semi-ring  $\SB$ viewed as a topological space.}
\label{DepthParameter}
\end{figure}

\begin{defn}\label{LMeasureDef} A {\it finite height Borel $\SB$-measure} $\nu$ on a Hausdorff topological space $X$ assigns an element $\nu(E)$ of $\SB$ to each Borel set $E$ such that:

(i) there is a uniform bound for $\level(\nu(E))$.

(ii) $\nu(\emptyset)=0$,

(iii) If $\{E_i\}_{i\in I}$ is a countable collection of pairwise disjoint Borel sets in $X$, then 
$$\nu\left(\bigcup_{i\in I} E_i\right )=\sum_{i\in I} \nu(E_i).$$
\end{defn}

Note that the sum in the above definition makes sense.   The abelian semi-group $\SB$ has the least upper bound property, and since we have a uniform bound for $\level(\nu(E))$, the partial sums are bounded.  Even without the finite height condition (i), we could define an $\bar\SB$-measure, where $\bar\SB$ has an adjoined infinity, say $\Infty$, greater than all the other elements, see \cite{UO:Order}.  This yields a Borel $\bar \SB$-measure.

\begin{defn}\label{OpenGradedMeasureDef} A finite height Borel $\SB$-measure  $\nu$ on a Hausdorff topological space $X$ is {\it open-graded} if the union $\bigcup E$ of Borel sets $E$ satisfying $\level (\nu(E))\le k$ or $\nu(E)=0$ is open in $X$. \end{defn} In this paper, an {\it $\SB$-measure} on a Hausdorff space $X$ is assumed to be a finite height Borel measure and open-graded.

The $\SB$-measures we use will satisfy additional conditions.    Associated to an $\SB$-measure $\nu$, we will define a collection of ordinary extended-real valued measures, namely a measure $\nu_k$ associated to each level $k$.  If $E\subset X$ is a Borel set we define

$$\nu_k(E)
=\begin{cases}\real(\nu(E))& \mbox{if } \level(\nu(E))=k \\
                     \infty & \mbox{if }\level(\nu(E))>k\\
                     0& \mbox{if }\level(\nu(E))<k\mbox{ or if }\nu(E)=0\in \SB
                                            \end{cases}
$$

\noindent Recall that the {\it support of the measure $\nu_k$} is the set $Q_k$ of points $x\in X$ having the property that every neighborhood of $x$ has positive measure.   Accordingly, $x\in Q_k$ if and only if every neighborhood $U$ of $x$ satisfies $\level(\nu(U))\ge k$.

\begin{defn}\label{FiniteDepthMeasureDef} A (finite height open-graded) {\it  locally finite $\SB$-measure} on a Hausdorff space $X$ is an $\SB$-measure $\nu$ satisfying the following additional condition:

\begin{enumerate}
\item [(iv)] $\nu_j$ restricted to $Q_j\setminus Q_{j+1}$, which restriction we denote $\hat\nu_j$, is a locally finite measure.   The measure $\hat\nu_j$ is {\it locally finite} if every point has a neighborhood of finite measure.   
\end{enumerate}
\end{defn}

 Notice that the measure $\nu$ can be recovered from the sequence $\{\nu_j\}$ or from the sequence $\{\hat\nu_j\}$.  Namely, to find $\nu(E)$ given all $\nu_i(E)$, let $j$ be the largest $i$ such that $\nu_i(E)>0$, then $\nu(E)=(j,\nu_j(E))\in \SB$.  To recover $\nu$ from $\hat \nu_j$, we need the open-graded condition. We claim that if $j$ is the largest $i$ such that $\hat\nu_i(E\setminus Q_{i+1})>0$, then $\nu(E)=(j,\hat\nu_j(E\setminus Q_{j+1})$.   This is true because if $\level(\nu(E))=j$, then $\nu_j(E)=\real(\nu(E))$, and  if $\nu$ is open-graded, $E$ is contained in the complement of $Q_{j+1}$, so $\hat\nu_j(E)=\nu_j(E)=\hat\nu_j(E\setminus Q_{i+1})$, while for $i>j$, $\hat\nu_i(E\setminus Q_{i+1})<\nu_i(E)=0$, hence $\hat\nu_i(E\setminus Q_{i+1})=0$.  

An informal explanation may be in order here.  A finite height $\SB$-measure with values in levels $0,1,2\ldots h$ is a measure with several magnifications.  Using no microscope, we are at height h, and we see only sets with a very dense measure.    Putting our space under the microscope with a higher magnification, we see sets with less dense measure as having positive measure, sets that had measure 0 when viewed with the naked eye.  The microscope is now adjusted to see sets with positive measure at height $h- 1$.    Increasing the magnification of the microscope one notch, we once again get a much closer view and see even thinner sets.  The finite height measure is like a smart microscope:  When it looks at a set, it immediately chooses the right magnification to see the set as having measure $> 0$ at a certain height.

Clearly a real locally finite measure takes finite values on compact sets.  Assume now that $X$ is compact and that $\nu$ is a finite height locally finite $\SB$-measure, with associated sequences $\nu_j$ and $\hat \nu_j$.   This means $\hat\nu_j$ takes infinite values only on non-compact sets.    Then if $E$ is a Borel set with  $\bar E\subset  Q_j\setminus Q_{j+1}$, it follows that $\hat\nu_j(E)\le \hat\nu_j(\bar E)<\infty$ because $\bar E$ is compact.  This means
 $\hat \nu_j$ takes infinite values only on Borel sets $E\subset Q_j\setminus Q_{j+1}$ which have closure points in $Q_{j+1}$.  Condition (iv) is a kind of continuity condition.  It says that $\nu(E)=(j,\infty)$ only if $\bar E\cap Q_{j+1}\ne \emptyset$, i.e. only if it is ``close to"  $Q_{j+1}$, the set where $\nu$ has level $j+1$.   
  
 If one of the measures $\hat\nu_j$, $0\le j\le h$, is trivial, then in some sense the finite height measure is equivalent to another one, in which there are no trivial $\hat\nu_j$ for $0\le j\le h$.   In this case we could decrease by 1 the level of every $\hat\nu_i$ for $i>j$.   
 
 \begin{defn}\label{LevelAdjustment}  Suppose $\nu$ is a finite height $\SB$-measure on $X$.  
 Suppose $\hat \nu_j$ is trivial.  Then $\nu$ is {\it equivalent via level alignment} to the measure $\mu$, where $\mu$ is obtained from $\nu$ by shifting some levels down:  $\hat \mu_i=\hat\nu_i$ for $i<j$;  $\hat \mu_i=\hat\nu_{i+1}$ for $i\ge j$.
\end{defn}

We will use finite height $\SB$-measures in a very simple setting where $X$ is a finite union of closed interval transversals for a lamination in a surface $S$.  

\begin{defns} \label{FiniteDepthLam2} Let $S$ be a surface with cusps satisfying $\chi_g(S)<0$.  We will usually assume that we have chosen a hyperbolic structure for $S$.  An {\it essential lamination} in $S$ is a lamination which is isotopic to a geodesic lamination containing no boundary components of $S$.  A {\it finite height essential lamination} in a surface $S$ with cusps is an essential lamination with an invariant transverse finite height $\SB$-measure $\nu$, which assigns an element of $\SB$ to each closed interval transversal  for $L$.   To say that the transverse measure is invariant means that isotoping the transversal through transversals to $L$, using an isotopy  leaving endpoints of the transversal in the same leaf or complementary component of $L$ at all times, yields the same measure on the new transversal.  To say that the transverse measure is finite height means that the $\SB$-measure is a finite height measure on any transversal (or finite union of transversals) in the sense of Definition \ref{FiniteDepthMeasureDef} and the heights on different transversals have a uniform bound.  
A {\it finite height essential measured lamination} in a surface $S$ is a finite height lamination, with a locally finite invariant $\SB$-measure on transversals.  \end{defns}

\begin{lemma} \label{DefnsLemma}  The definitions of finite height lamination given in Definition \ref{FDepthDef} and Definition \ref{FiniteDepthLam2} are equivalent.  Also the definitions of finite height measured lamination given in Definition \ref{FiniteDepthLam} and Definition \ref{FiniteDepthLam2} are equivalent.
\end{lemma}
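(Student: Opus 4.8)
The plan is to prove the two asserted equivalences by going back and forth between the "structural" data of Definition \ref{FDepthDef} (respectively \ref{FiniteDepthLam}) and the "measure-theoretic" data of Definitions \ref{FiniteDepthLam2}. The key observation is that a finite height invariant $\SB$-measure $\nu$ on transversals decomposes, by the machinery set up just before the lemma, into the sequence of ordinary extended-real measures $\nu_j$ and their restrictions $\hat\nu_j$ to $Q_j\setminus Q_{j+1}$, where $Q_k$ is the support of $\nu_k$. The sets $Q_k$ are exactly the candidates for the sublaminations $\bigcup_{j\ge k}L_j$: invariance of $\nu$ forces each $Q_k$ to be a union of leaves, hence (after choosing the geodesic representative) a geodesic sublamination, and the open-graded condition guarantees that $Q_k\setminus Q_{k+1}$ is relatively open in $Q_k$, i.e. is itself a lamination in the completion $S_k$ of $S\setminus Q_{k+1}$. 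So the translation dictionary is: $L_j = Q_j\setminus Q_{j+1}$, with $X_j$ the support of $\hat\nu_j$ (the part where $\hat\nu_j$ is locally finite — here one uses condition (iv) of Definition \ref{FiniteDepthMeasureDef}) and $Y_j$ the locus where $\hat\nu_j$ is locally infinite; conversely, given the data $L=\bigcup_{j=0}^h L_j$ of Definition \ref{FDepthDef} with the measures $\mu_j$ on the $X_j$, one builds $\nu$ on a transversal $T$ by letting $j$ be the top level met transversely by $T$ and setting $\nu(T)=(j,\mu_j^{\mathrm{geom}}(T))$, where $\mu_j^{\mathrm{geom}}$ is the geometric measure on $\bar L_j$ discussed in the paragraph after Proposition \ref{PlanteProp} (finite on transversals contained in $\hat S_j$, infinite on transversals touching $\bdry S_j$). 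One then checks countable additivity and invariance, the finite-height bound being exactly the uniform bound on the heights $h$.

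First I would prove the forward direction (Definition \ref{FDepthDef} $\Rightarrow$ Definition \ref{FiniteDepthLam2}): given the structural data, define $\nu$ levelwise as above and verify axioms (i)--(iii) of Definition \ref{LMeasureDef}, the open-graded condition, and local finiteness axiom (iv), with invariance following from invariance of each $\mu_j$ together with the fact that isotopies fixing endpoint-leaves cannot change which level is topmost on the transversal. The only subtlety is matching "no $\bdry$-parallel leaves" in (i) of Definition \ref{FDepthDef} with the requirement in Definition \ref{FiniteDepthLam2} that the essential lamination contain no boundary components — but boundary-parallel leaves are absorbed into the completion data $\bar L_i=L_i\cup\bdry S_i$, so after passing to geodesic representatives in $S$ the two conditions agree. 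Then I would prove the converse: starting from $(L,\nu)$, set $Q_k=\operatorname{supp}\nu_k$, show each $Q_k$ is an essential geodesic sublamination (invariance + essentiality of $L$), set $L_j=Q_j\setminus Q_{j+1}$, and recover $\mu_j$ by restricting $\hat\nu_j$; the open-graded hypothesis is what makes $\hat S_j=S\setminus Q_{j+1}$ the right ambient surface and makes $L_j$ a genuine lamination in its completion $S_j$, verifying (i)--(v) of Definition \ref{FDepthDef}. The statement about finite height \emph{measured} laminations is the special case $Y_j=\emptyset$ for all $j$, equivalently each $\hat\nu_j$ globally locally finite, which is precisely how Definition \ref{FiniteDepthLam} (via \cite{UO:Pair}) and the "locally finite $\SB$-measure" clause of Definition \ref{FiniteDepthLam2} line up; for the closed-surface case $F$ of Definition \ref{FiniteDepthLam} one additionally notes that closed leaves contributing atoms are handled exactly as flagged in the last paragraph of Definition \ref{FDepthDef}.

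The main obstacle I expect is the careful treatment of leaves that spiral onto closed curves of $\bdry S_i$ — the mismatch between "$L_i$ is a lamination in $\hat S_i$ but not closed in the completion $S_i$" and "$\bar L_i$ is a lamination in $S_i$." One must check that the geometric measure on $\bar L_i$ is genuinely countably additive and invariant on transversals that cross from $\hat S_i$ into $\bdry S_i$ (where it blows up to $\infty$), and that this blow-up is consistent across the whole level decomposition so that axioms (iii) and (iv) hold simultaneously. This is really the content of the remark that condition (iv) "is a kind of continuity condition": the proof must verify that $\nu(E)=(j,\infty)$ forces $\bar E\cap Q_{j+1}\ne\emptyset$, and this is exactly where the spiraling leaves live. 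Once that compatibility is pinned down, the rest of the equivalence is bookkeeping, translating each clause of one definition into a clause of the other via the dictionary $L_j=Q_j\setminus Q_{j+1}$, $X_j=\operatorname{supp}\hat\nu_j^{\mathrm{loc.fin.}}$, $Y_j=\operatorname{supp}\hat\nu_j^{\mathrm{loc.inf.}}$.
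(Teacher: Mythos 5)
Your proposal is correct and takes essentially the same route as the paper: the paper likewise recovers the structural data from the measure by declaring a leaf to lie in $L_j$ (resp.\ $Y_j$) when every open transversal through it, disjoint from higher levels, satisfies $\level(\nu(T))=j$ (resp.\ $\real(\nu(T))=\infty$) --- which is exactly your decomposition $L_j=Q_j\setminus Q_{j+1}$ via the supports of the $\nu_k$ --- and conversely defines $\nu(T)$ by the top level met by the interior of $T$, assigning real part $\infty$ when $T$ meets $Y_j$ or an endpoint lies in $L_{j+1}$ (the one point your appeal to $\mu_j^{\mathrm{geom}}$ leaves implicit). The difference is only in packaging (supports $Q_k$ and $\hat\nu_j$ versus the paper's leaf-by-leaf transversal argument), not in substance.
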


\begin{proof}  Suppose first that $L$ is a lamination in $S$  with an invariant transverse $\SB$-measure.   We choose finitely many closed transversals $T_i$ with the property that every leaf is intersected by at least one of the transversals, and we let $X$ denote the union of these transversals.  Then the transverse measure on $L$ yields an $\SB$-measure $\nu$ on $X$, which must, of course, satisfy certain compatibility conditions coming from the invariance of the transverse measure on $L$.   We may assume that the measures $\nu_i$ associated to the measure $\nu$ on $X$  are $\nu_{0},\nu_{1}, \ldots \nu_{h-1},\nu_h$.   We will also use $\nu$ to denote the transverse measure on $L$,  which is determined by the transverse measure on $X$.   Let $L_h$ be the union of leaves $\ell\subset L$ with the property that every open transversal $T$ through $\ell$ satisfies $\level(\nu(T))=h$.   The  $\SB$-measure $\nu$ on $X$ yields the associated $\nu_h$ on $X$ as in the above discussion.  Then $\nu_h$ is a  transverse $\bar\reals$-measure $\nu_h$ for $L_h$.
We claim $L_h$ is closed, for if $\ell$ is a leaf in the closure of $L_h$, then any open transversal $T$ through $\ell$ must intersect $L_h$, so $\level(\nu(T))=h$ and $\ell\subset L_h$.  Let $Y_h$ be the union of leaves $\ell$ of $L_h$ with the property that every open transversal $T$ through $\ell$ has infinite measure: $\real(\nu(T))=\infty$.  Then $Y_h$ is closed by the same reasoning.  Further $X_h=L_h\setminus Y_h$ is closed in $S\setminus Y_h$.

Inductively, having defined $L_h,L_{h-1}\ldots L_{h-j+1}$; $Y_h,Y_{h-1}\ldots Y_{h-j+1}; X_h,X_{h-1},\ldots, X_{h-j+1}$; let $L_{h-j}$ be the union of leaves $\ell\subset L$ which are not in $L_i$ for $i>h-j$ and with the property that for every open transversal $T$ through $\ell$ disjoint from $L_i$ for $i>h-j$, we have $\level(\nu(T))=h-j$.  Letting $\hat S_{h-j}$ denote $S-\{L_h\cup L_{h-1} \cup\cdots \cup L_{h-j+1}\}$, we claim that $L_{h-j}$ is closed in $ \hat S_{h-j}$.  The proof is the same as for $L_h$:  If $\ell$ is a leaf in the closure of $L_{h-j}$ in $\hat S_{h-j}$ and $T$ is an open transversal in $\hat S_{h-j}$ intersecting $\ell$, then $T$ intersects leaves of $L_{h-j}$, which implies $\level(\nu(T))=h-j$ and $\ell\subset L_{h-j}$.  Let $Y_{h-j}$ denote the union of leaves $\ell$ in $L-\{L_h\cup L_{h-1} \cup\cdots \cup L_{h-j+1}\}$ with the property that any open transversal $T$ of $\ell$ in $\hat S_{h-j}$ satisfies $\real(\nu(T))=\infty$.  Then as before, we can show $Y_{h-j}$ is closed in $\hat S_{h-j}$.  

We have shown that if $L$ is a finite height essential lamination according to Definition \ref{FiniteDepthLam2}, then it is a finite height lamination according to Definition \ref{FDepthDef}.  

Now assuming $L$ is a finite height lamination according to Definition \ref{FDepthDef}, we will show it is a finite height lamination according to Definition \ref{FiniteDepthLam2}.  Given $L$ as a union $L=\bigcup_{j=0}^hL_j$ where $L_j=X_j\cup Y_j$, with $X_j$ having a real invariant transverse measure $\mu_j$ we define an invariant transverse $\SB$-measure $\nu$.  If $T$ is a transversal, let $\level(\nu(T))$ be the greatest $j$ such that the interior of $T$ intersects $L_j$.  Then $\intr(T)\subset \hat S_j$, otherwise the interior of the transversal would intersect leaves at a higher level.  If $T$ intersects $Y_j$ or an endpoint of $T$ intersects $L_{j+1}$, we assign $\real(\nu(T))=\infty$, otherwise we assign $\real(\nu(T))=\mu_j(T)$.  Of course we assign $\level(\nu(T))=j$.  Now we can easily verify that $\nu$ is a transverse finite height $\SB$-measure.

It remains to prove the statement about finite height measured laminations.  In this case, for one implication, we assume we have a transverse invariant locally finite $\SB$-measure $\nu$ on $L$.    Following the previous argument, we obtain $L_j$, $Y_j$, and $X_j$ as before, with an $\reals$-measure on $X_j$,  but $Y_j$ is empty.  Conversely, starting with a finite height measured lamination $L=L_0\cup L_1\cup \cdots L_h$, with transverse $\reals$-measure $\mu_j$ on $L_j$, we define the transverse $\SB$-measure $\nu$ as before:  If $T$ is a transversal, let $\level(\nu(T))$ be the highest level $j$ such that the interior of $T$ intersects $L_j$, we assign $\nu(T)=(j,\mu_j(T))$.  Then it is easy to verify that $\nu$ is a transverse finite height locally finite $\SB$-measure.  \end{proof}
 
 Now we can give a proof of Proposition \ref{PlanteProp}, which can also be restated in terms of transverse $\SB$-measures.
 
 \begin{proof}[Proof of Proposition \ref{PlanteProp}]  We use an old argument due to J. Plante, \cite{JP:PlanteLimit}.   Choose any leaf $\ell$ of $L$.  We choose an exhaustion of $\ell$ by an nested sequence of subarcs $I_n$, $\cup_nI_n=\ell$.  We can define a transverse measure $\mu_n$ for $L$ which just counts the number of intersections of a transversal $T$ with $I_n$.   The transverse measures $\mu_n$ are not invariant.  We can assume $L$ is carried by a train track $\tau$ and normalize $\mu_n$ such that the maximum weight induced on a segment of $\tau$ is $1$.  Then $\mu_n$ (viewed as a weight vector on $\tau$) has a convergent subsequence, converging to a weight vector on $\tau$, which determines a transverse measure $\nu_0$ with support $L_0$, say, a sublamination of $L$.    Now we consider the lamination $L\setminus L_0$ in $\hat S_1$, the completion of $S\setminus L_0$, and we choose a leaf $\ell$ of $L\setminus L_0$ and construct a transverse measure using Plante's argument again to obtain a transverse measure $\nu_1$ with support $L_1$.  We proceed inductively to obtain the finite height structure.  The process stops with $L_h$, because at each step of the argument at least one more segment of $\tau$ is assigned a non-zero weight at some level.  Finally, we observe that our indexing needs to be reversed, so $L_0$ becomes $L_h$ and vice versa.
 \end{proof}
 
 \begin{remark}  Proposition \ref{PlanteProp} can be restated as follows:  Every geodesic lamination admits a locally finite transverse $\SB$-measure.
 \end{remark}


Before we can define a set $\F(S)$ and $\FC(S)$ of finite height laminations in $S$, we must describe equivalence relations on finite height laminations whose classes are elements of the sets $\F(S)$ and $\FC(S)$.

\begin{defn}\label{ContiguousDef} If $L_j=\emptyset$, the finite height lamination given as \\$L=((L_0,\nu_0),(L_1,\nu_1),\ldots,(L_h,\nu_h))$, with $\nu_i$ having full support on $L_i$, is {\it equivalent by alignment} to the lamination $L=((L_0,\nu_0),(L_1,\nu_1),\ldots,(L_{j-1},\nu_{j-1}),(L_{j+1},\nu_{j+1}),\ldots, (L_h,\nu_h))$, the latter having height $h-1$.    Each such equivalence class is represented by a {\it proximal lamination}, i.e. a lamination $L=((L_0,\nu_0),(L_1,\nu_1),\ldots,(L_h,\nu_h))$  for which every $L_i\ne \emptyset$, $0\le i\le h$.  The set of proximal finite height essential laminations is called $\F(S)$.
 \hip
 {\it Level adjustment:} Suppose $L\in \F(S)$ is given as above, presented as\\  $((L_0,\nu_0),(L_1,\nu_1),\ldots,(L_h,\nu_h))$.   Suppose  $j<h$ and $L_j=L_j'\cup L_j''$, where $(L_j',\nu')$ and $(L_j'',\nu'')$ are laminations in $\hat S_j$, and $(L_{j+1}\cup L_j'',\nu_{j+1}')$ is a $\bar\reals$-measured lamination in $\hat S_{j+1}$ (or in $S$ if $j+1=h$), with $\nu_{j+1}'=\nu_{j+1}+\nu_j''$ on $L_{j+1}\cup L_j''$.   Then we obtain a modified finite height lamination $L'=((L_0,\nu_0),(L_1,\nu_1),\ldots, (L_{j},\nu_{j}),(L_{j+1}\cup L_j'',\nu_{j+1}+\nu_j''),(L_{j+2},\nu_{j+2}), \ldots (L_h,\nu_h))$.  \hop

If $L=((L_0,\nu_0),(L_1,\nu_1),\ldots,(L_d,\nu_d))$ has the property that no level adjustment modifications or level alignments are possible, then we say $L$ is a {\it contiguous finite height lamination} and belongs to $\FC(S)$.  \end{defn}

Informally, a proximal (contiguous) finite height lamination is one with the property that ``there are no unnecessary gaps between levels," where ``gaps" are defined differently in the two cases.  Also informally, a level alignment moves an $L_j$ down one level, while a level adjustment moves a component of $L_j$ up one level.  

\begin{proposition}  Suppose a finite height lamination $L$ is presented as \\
 $((L_0,\nu_0),(L_1,\nu_1),\ldots,(L_d,\nu_d))$, with $L_j=X_j\cup Y_j$.    If the measured part $X_j$ of $L_j$ is empty for all $j$, then $L$ is equivalent by level alignment and level adjustment to a lamination $L'$ of the form $(L'_0,\nu'_0)$ with $X'_0=\emptyset$ and $\nu_0'$ everywhere locally infinite.  These laminations are exactly the laminations of $\GL(S)$ described in the introduction.
\end{proposition}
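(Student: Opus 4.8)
The plan is to induct on the height $d$, reducing it by one at each step by merging the top two levels into a single level via one level adjustment followed by one level alignment; after $d$ steps only level $0$ remains, carrying an everywhere locally infinite measure.

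First I would note that, by discarding empty levels through level alignments, one may assume $L=((L_0,\nu_0),\ldots,(L_d,\nu_d))$ is proximal, and that the hypothesis $X_j=\emptyset$ for all $j$ means exactly that each $L_j=Y_j$ is a geodesic lamination whose full-support transverse measure $\nu_j$ is everywhere locally infinite, while $L=\bigcup_{j=0}^{d}L_j$ is, by Definition \ref{FDepthDef}, an essential geodesic lamination in $S$ (in particular with no $\partial$-parallel leaves). The base case $d=0$ requires nothing: $(L_0,\nu_0)$ already has the desired form, since $X_0=\emptyset$ forces $\nu_0$ everywhere locally infinite. For $d>0$, the inductive step is to perform a level adjustment at $j=d-1$ with $L_{d-1}''=L_{d-1}$ and $L_{d-1}'=\emptyset$. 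Its legitimacy comes down to checking that $L_{d-1}\cup L_d$, equipped with $\nu_d+\nu_{d-1}$, is a valid $\bar\reals$-measured lamination in $S$ (note $j+1=d=h$): $L_{d-1}$ and $L_d$ are disjoint closed geodesic laminations, so their union is a closed geodesic lamination in $S$; it has no $\partial$-parallel leaves, being a sublamination of the essential lamination $L$; and $\nu_d+\nu_{d-1}$ is everywhere locally infinite. The level adjustment empties level $d-1$, and a level alignment then deletes it, yielding a height-$(d-1)$ finite height lamination with the same underlying point set $L$, still with empty measured part at every level, and equivalent to the original $L$ by the permitted moves. Invoking the inductive hypothesis would then complete the argument for the first assertion, with $L'_0=L$ and $\nu'_0$ everywhere locally infinite.

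For the last sentence I would argue that a finite height lamination $(L'_0,\nu'_0)$ with $X'_0=\emptyset$ and $\nu'_0$ everywhere locally infinite is completely determined by its underlying geodesic lamination $L'_0$: on any transversal $T$ one has $\nu'_0(T)=(0,\infty)$ if $T$ meets $L'_0$ and $\nu'_0(T)=0$ otherwise, which is precisely the intersection-number data $i_\gamma\in\{0,\infty\}$ used in Theorem \ref{GeodesicLamThm}; and conversely every geodesic lamination in $S$ carries this everywhere locally infinite measure. Hence the laminations of the stated form are exactly the elements of $\GL(S)$.

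The step I expect to be the main obstacle is the verification that each level adjustment is admissible, i.e. that the merged lamination satisfies all of conditions (i)--(v) of Definition \ref{FDepthDef} at its new level. I expect this to be routine rather than deep: all merging takes place at the top level, where the ambient surface $\hat S$ equals $S$ and hence the completion conditions (iv),(v) are vacuous; the lower levels $0,\ldots,d-2$ are literally unchanged, because $\hat S_i=S\setminus\bigcup_{j>i}L_j$ depends only on the point set $\bigcup_{j>i}L_j$, which the merge does not alter; and what is left is the elementary fact that a union of two disjoint closed geodesic laminations is a closed geodesic lamination, together with the inheritance of essentiality and of the absence of $\partial$-parallel leaves under passage to sublaminations of $L$.
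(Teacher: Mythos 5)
Your proposal is correct and matches the paper, which simply declares the proof ``immediate'': the intended content is exactly the top-down merging you spell out, repeatedly applying a level adjustment with $L_j''=L_j$ followed by a level alignment until a single level with everywhere locally infinite measure remains, and then identifying such single-level objects with $\GL(S)$. The only slight looseness is calling $L_{d-1}$ closed in $S$ (it is closed only in $\hat S_{d-1}$); what you actually need, and essentially already say, is that $L_{d-1}\cup L_d$ is a closed sublamination of the essential lamination $L$, so the merged level is a legitimate $\bar\reals$-measured lamination in $S$.
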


The proof of the proposition is immediate.

The proposition shows that without non-trivial measured sublaminations $X_j$ at some levels, finite height contiguous laminations as we defined them have total height 0.  That is not the case for proximal laminations.

We finish this section with an alternative description of the parameter space $\SB$.  The alternative description will be useful in analyzing finite height laminations.  We let $\II$ denote the interval in $\bar\reals$.  We will realize $\SB$ as a subspace of $\II^\nat$ with the product topology.  

\begin{defn}  Let $\SBB$ denote the subspace of $\II^\nat$ with the product topology consisting of elements $(x_i)_{i\in\nat}$ such that for some distinguished $k\in \nat$, $x_i=\infty$ for $i<k$, $x_i=0$ for $i>k$, and $0<x_k\le \infty$.  In addition $\SBB$ contains the 0 element $(x_i)$ with $x_i=0$ for all $i$.  \end{defn}

\begin{remarks} (1) We are using slightly different symbols $\SB$ and $\SBB$ for the two versions of our parameter space.  (2) For an element $x=(x_i)\in \SBB$, the distinguished index $k$ is the last $i$ such that $x_i>0$, so $k$ is determined by $x$.
\end{remarks}

\begin{proposition} \label{LHomeoProp} The spaces $\SB$ and $\SBB$ are homeomorphic via a homeomorphism $\psi:\SB\to\SBB$ defined by 
$$\psi((k,a))=(\infty,\infty\ldots,\infty,a,0,0,0\ldots),\ \psi(0)=(0,0,\ldots,0,0,\ldots),$$ where the entry $a$, $0<a\le\infty$, is in the $k$ factor of $[0,\infty]^\nat$.
\end{proposition}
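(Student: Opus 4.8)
The plan is to establish that $\psi$ is a well-defined bijection and that both $\psi$ and $\psi^{-1}$ are continuous, using the explicit order-theoretic descriptions of both spaces. First I would check that $\psi$ lands in $\SBB$: for $(k,a)$ with $0<a\le\infty$, the image $(\infty,\dots,\infty,a,0,0,\dots)$ has $x_i=\infty$ for $i<k$, $x_k=a\in(0,\infty]$, and $x_i=0$ for $i>k$, which is exactly the defining form of a nonzero element of $\SBB$ with distinguished index $k$; and $\psi(0)$ is the zero element. Conversely, given any nonzero $x=(x_i)\in\SBB$, Remark (2) tells us the distinguished index $k$ is determined by $x$, and then $x_i=\infty$ for $i<k$, $x_k\in(0,\infty]$, $x_i=0$ for $i>k$, so $x=\psi((k,x_k))$; together with $\psi(0)=0$ this shows $\psi$ is a bijection, with inverse sending a nonzero $x$ to $(k,x_k)$ and the zero element to $0$.

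Next I would prove continuity of $\psi$. The topology on $\SB$ is the order topology, so a subbasis is given by rays $\{y:y<z\}$ and $\{y:y>z\}$; the topology on $\SBB$ is the subspace topology from the product $\II^\nat$, with subbasic open sets the preimages under coordinate projections $\pi_i$ of open subsets of $\II=[0,\infty]$. It suffices to show each composite $\pi_i\circ\psi:\SB\to\II$ is continuous. Unwinding the definition, $(\pi_i\circ\psi)((k,a))$ equals $\infty$ if $i<k$, equals $a$ if $i=k$, equals $0$ if $i>k$, and $(\pi_i\circ\psi)(0)=0$. For fixed $i$, I would check that the preimage of an interval $(c,d]\subseteq\II$ (a typical open set of $[0,\infty]$ in its order topology, together with the analogous half-open sets at the ends) is open in $\SB$ by writing it explicitly as a union of order-rays: e.g. the set $\{x\in\SB:(\pi_i\circ\psi)(x)>c\}$ consists of all $(k,a)$ with $k>i$ (all of which map to $\infty$), together with those with $k=i$ and $a>c$; this is precisely $\{x\in\SB: x>(i,c)\}$ when $c<\infty$ and is open; similar bookkeeping handles $\{(\pi_i\circ\psi)(x)<d\}$, which will turn out to be $\{x: x<(i+1,?)\}$-type rays or their complements-of-rays, again open. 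This is the one genuinely fiddly part — keeping track of how the lexicographic order on $\SB$ interacts with each coordinate — but it is a finite case analysis with no real obstacle.

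Finally I would prove continuity of $\psi^{-1}$. Here the cleanest route is to verify that $\psi$ is an open map, or equivalently that $\psi^{-1}$ pulls back the subbasic order-rays of $\SB$ to open sets of $\SBB$. The ray $\{y\in\SB: y>(k,a)\}$ pulls back, under $\psi^{-1}$, to $\{x\in\SBB: \psi^{-1}(x)>(k,a)\}$; using the determined-index description, this is the set of $x$ whose distinguished index is $>k$, or is $=k$ with $x_k>a$. I would exhibit this as an intersection of $\SBB$ with a product-open set: roughly, ``$x_{k+1}>0$'' (which forces distinguished index $\ge k+1$) unions with ``$x_i=\infty$ for $i<k$ and $x_k>a$ and $x_{k+1}=0$'', each clause being a finite intersection of coordinate conditions of the form $\pi_i^{-1}(\text{open})$, hence open in the product. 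The ray $\{y<(k,a)\}$ is handled symmetrically. Since $\SB$ is generated as a topological space by these rays, this shows $\psi$ is continuous as a map whose domain has the order topology — wait, that is backwards; more precisely it shows $\psi^{-1}$ is continuous — and combined with the previous paragraph gives that $\psi$ is a homeomorphism. The main obstacle throughout is purely notational: carefully matching the lexicographic-order neighborhoods of $\SB$ with product neighborhoods of $\SBB$ across the three regimes $i<k$, $i=k$, $i>k$; there is no conceptual difficulty, and one could alternatively shortcut the entire argument by noting both spaces are second-countable and checking convergence of sequences, since a sequence $(k_n,a_n)\to(k,a)$ in $\SB$ iff eventually $k_n=k$ and $a_n\to a$ (or the $k_n$ increase with the relevant coordinates behaving accordingly), which corresponds coordinatewise exactly to convergence of the images in $\II^\nat$.
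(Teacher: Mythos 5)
Your overall plan is sound and is in the same spirit as the paper's argument: the paper checks that $\psi$ carries a neighborhood basis at each of the three kinds of points of $\SB$ (points $(k,a)$ with $a<\infty$, points $(k,\infty)$, and $0$) exactly onto a neighborhood basis in $\SBB$, while you check preimages of subbasic open sets in both directions; either bookkeeping scheme, carried out correctly, proves the proposition. The bijectivity argument and the continuity of $\psi$ (via the composites $\pi_i\circ\psi$, whose subbasic preimages are the order rays $\{x>(i,c)\}$ and $\{x<(i,d)\}$) are fine, modulo the trivial boundary case $c=0$.

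The one step that would fail as written is your justification of openness of $\psi(\{y\in\SB:y>(k,a)\})$ in $\SBB$. You describe it as a union of two clauses and assert that ``each clause is a finite intersection of coordinate conditions of the form $\pi_i^{-1}(\text{open})$, hence open in the product.'' The second clause involves the conditions $x_i=\infty$ for $i<k$ and $x_{k+1}=0$, and the singletons $\{\infty\}$ and $\{0\}$ are not open in $\II=[0,\infty]$; moreover that clause by itself is not even open in the subspace $\SBB$ (any basic neighborhood of a point with distinguished index $k$ contains points of distinguished index $>k$). So the clause-by-clause argument is wrong, although the conclusion is still true because only the union needs to be open. The repair is in fact simpler than your description: for $0<a<\infty$ one checks directly that $\psi(\{y>(k,a)\})=\SBB\cap\pi_k^{-1}((a,\infty])$ and $\psi(\{y<(k,a)\})=\SBB\cap\pi_k^{-1}([0,a))$ (a point of $\SBB$ with $x_k>a$ has distinguished index $\ge k$, hence maps back above $(k,a)$, and a point with $x_k<a$ has distinguished index $\le k$, hence maps back below $(k,a)$), so each pulled-back ray is cut out by a single coordinate condition and is open. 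With that correction your argument goes through and matches the paper's conclusion; your closing remark about arguing via sequences is also legitimate since both spaces are second countable, but it is only a sketch and the direct verification above is what actually needs to be written down.
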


\begin{proof}  Clearly $\psi $ is injective and onto.  We have standard bases for both spaces:  In $\SB$ we have a basis of  intervals in the order topology; in $\SBB$ we have a basis consisting of basis neighborhoods in the product topology for $[0,\infty]^\nat$ intersected with $\SBB$. We will show that sufficiently small basis neighborhoods of points in both topologies are the  same.  

In $\SB$ a small basis neighborhood of a point $(k,a)$ where $a\ne \infty$ takes the form $((k,c),(k,d))$ with $c<a<d$.  Now $\psi((k,a))=(\infty,\infty\ldots,\infty,a,0,0,0\ldots)$ with distinguished entry at $k$.  A small basis neighborhood of this point in the product topology intersected with $\SBB$ takes the form $\{\infty\}\times\{\infty\}\cdots\times\{\infty\}\times\{\infty\}\times (c,d)\times \{0\}\times \{0\}\times\cdots$ which is the image of the basis neighborhood of $(k,a)$ in $\SB$.

In $\SB$ a small basis neighborhood of $(k,\infty)$ takes the form of the interval $((k,c),(k+1,d))$, with $c$, $d$ finite.  The image is 
$$\psi\Big(\big((k,c),(k+1,d)\big)\Big)=\psi\Big(\big((k,c),(k,\infty)\big]\Big)\cup \psi\Big(\big((k,\infty),(k+1,d)\big)\Big)=$$
\vskip -0.3in
$$\left(\{\infty\}\times\{\infty\}\times \cdots\times\{\infty\}\times (c,\infty]\times \{0\}\times \{0\}\times\cdots\right)\cup\left(\{\infty\}\times\cdots\times\{\infty\}\times (0,d)\times  \{0\}\times\cdots\right)=$$
\vskip -0.3in
$$\left(\{\infty\}\times\{\infty\}\times \cdots\times\{\infty\}\times (c,\infty]\times \{0\}\times \{0\}\times\cdots\right)\cup\left(\{\infty\}\times\cdots\times\{\infty\}\times [0,d)\times  \{0\}\times\cdots\right),$$

\noindent which is a typical small basis neighborhood in $\SBB$ of $( \infty,\infty,\ldots, \infty,0, 0,0,\ldots)$ with the last $\infty$ in the $k$ entry.  

A typical small basis neighborhood of the point $0\in \SB$ has the form $\big[0, (0,c)\big)$ with $c\ne\infty$. The image in 
$\SBB$ has the form $[0,c)\times \{0\}\times \{0\}\times\cdots$, which is also a typical small basis neighborhood of $0$ in $\SBB$.  \end{proof}

Evidently we can now transfer the algebraic structure of $\SB$ to $\SBB$.

\section {The topology}

In order to describe a topology for $\F(S)$, $\FC(S)$, $\PF(S)$, and $\PFC(S)$, it is convenient first to understand a topology for ``the set of finite height laminations in one level," i.e., laminations of the form $L_0=X_0\cup Y_0$ in the surface-with-cusps $S$, where $X_0$ has an everywhere locally finite transverse $\reals$-measure, and $Y_0$ has an everywhere locally infinite transverse measure.  The set of laminations at a different level are the same, except they exist in a different surface-with-cusps.  Thus $L_i=X_i\cup Y_i$ lives in the completion $S_i$ of $\displaystyle\hat S_i=S\setminus \bigcup_{j>i}L_i$.

\begin{defn}  A  {\it weakly measured essential lamination, or $\bar\reals$-measured lamination} in a surface-with-cusps $S$ is an essential lamination $L$ with a transverse measure of full support having values in $[0,\infty]\subset \bar\reals$, the extended reals.  (The transverse measure of a transversal should be positive whenever the transversal intersects the lamination.)  To say the lamination is essential means that it can be realized as a geodesic lamination with no leaves in $\bdry S$.  The set of weakly measured laminations in $S$ will be denoted $\WM(S)$.  The completion $\bar L=L\cup \bdry S$ can be regarded as a non-essential weakly measured lamination in $S$ if we assign an atomic infinite measure to each component of $\bdry S$, except closed curves approached by spiral leaves.  In particular, if $L$ is $\reals$-measured, then $\bar L$ is called a {\it geometric measured lamination in $S$}.
\end{defn}

We remark that if $L$ is $\reals$-measured in $S$, then $\bar L$  is called ``geometric" because $\DL$ approximates a hyperbolic structure on $\breve S$.

Recall that for a measured lamination in $S$, we require transverse measures to be finite on closed transversals in the interior of $S$, so this is the requirement we are discarding in the above definition, though we keep that requirement for geometric measures.   Thus if  $\M(S)$ denotes the set of measured laminations in $S$, we have $\M(S)\subset \WM(S)$.   As we have mentioned before, eliminating the requirement of local finiteness allows laminations which have no transverse structure.

For ordinary measured lamination space, the topology is obtained using parameters equal to intersection numbers of homotopy classes of closed curves in the surface with measured laminations.  To describe a topology for $\WM(S)$, after choosing a hyperbolic structure for $S$, we will use intersection numbers of embedded geodesics with weakly measured laminations, which intersection numbers have values in $[0,\infty]$.

For technical reasons we will use multiple copies of some of the geodesics.  For every closed geodesic $\gamma$ not in $\bdry S$, we will use five copies of $\gamma$.  The five copies arise as follows.
We can perturb $\gamma$ normally in two directions to get $\gamma'$ and $\gamma''$.  Each of these has a preferred normal, corresponding to the direction of perturbation.  We can think of $\gamma'$ as ``infinitesimally" perturbed, but sometimes we will have to think of it as actually perturbed.  If the closed curve $\gamma'$ is oriented so that the perturbation normal for $\gamma'$ followed by the orientation of $\gamma'$ yields the orientation of $S$, we obtain the oriented geodesic $\gamma_+'$.   Choosing the orientation for $\gamma'$ such that the perturbation normal followed by the orientation on $\gamma'$ yields the opposite of the orientation on $S$, we obtain $\gamma_-'$.  In the same way we obtain $\gamma''_+$ and $\gamma''_-$.  For each closed geodesic in $\bdry S$ we also include oriented copies:   If $\gamma$ is a closed curve in $\bdry S$, we use only $\gamma'$, where $\gamma'$ is $\gamma$ perturbed to the interior of $S$. The orientations of $\gamma'_+$ and $\gamma'_-$ are determined as before.

\begin{defn} Suppose $S$ is a surface-with-cusps. Then $\G$ denotes a set of geodesics in $S$, including components of $\bdry S$,  with some geodesics represented multiple times as follows:

\begin{tightenum}
\item For every closed geodesic $\gamma$ in $\bdry S$, we include two oriented perturbed geodesics $\gamma'_+$, and $\gamma'_-$,  perturbed towards the interior of $S$ with orientations as described above.
\item For every closed geodesic $\gamma$ not in $\bdry S$ we include five copies of $\gamma$ denoted $\gamma$, $\gamma'_+$, $\gamma'_-$,   $\gamma''_+$, and $\gamma''_-$ with $\gamma'_+$, $\gamma'_-$, $\gamma''_+$, and $\gamma''_-$ oriented as described above.
\item Every non-closed geodesic $\gamma$ is included only once without orientation.
\end{tightenum}

We say that $\gamma'_+$, $\gamma'_-$,   $\gamma''_+$, and $\gamma''_-$ are {\it oriented variants} of the closed curve $\gamma$. \end{defn}

The set of weakly measured laminations $\WM(S)$ will be topologized as a subspace of $[0,\infty]^\G$ by mapping a weakly measured lamination $(L,\mu)$ to the point $\I(L,\mu)=(i_\theta(L,\mu))_{\theta\in \G}$, where $i_\theta(L,\mu)$ is an intersection number we will now define.  The value of  $i_\theta(L,\mu)$ is calculated differently, depending on whether or not $\theta$ is an oriented variant. 

\begin{defn} If $\theta=\gamma$ is not an oriented variant,  $i_\theta(L,\mu)$ denotes the total  transverse measure (or length) of $\theta=\gamma$ as measured by $\mu$.   In particular, if $\theta=\gamma$ is not a variant and $\gamma$ is a leaf of $L$, we make the convention that $i_\theta(L)=0$.  Next, we define $i_\theta(L)$ for variants $\gamma'_+$, $\gamma'_-$,   $\gamma''_+$, or $\gamma''_-$ of a closed geodesic $\gamma$ in the interior of $S$.  If $\gamma$ is not a leaf of $L$, we define $i_\theta(L,\mu)=0$ for $\theta$ equal to any of the variants $\gamma'_+$, $\gamma'_-$,   $\gamma''_+$, or $\gamma''_-$.  If $\gamma$ is a closed leaf of $L$ and $\theta$ is a variant of  $\gamma$, so $\theta$ is $\gamma'_+$, say, with an orientation, we define $i_{\theta}(L)$ to be the measure of leaves spiraling to $\gamma$ from the side to which $\gamma$ is perturbed to get $\gamma'_+$, provided the orientation of $\theta$ is related to the sense of spiraling as shown in Figure \ref{Sign}.  In the same figure, if $\theta=\gamma'_-$, $i_\theta(L)=0$.  Still considering $\theta=\gamma'_-$, the number $i_\theta(L)$ detects the measure of leaves of $L$ spiraling in the opposite sense from that in the figure.  If no leaves spiral to $\gamma$, $i_\theta(L)=0$ for all variants of $\gamma$.  Similarly, if $\theta$ is an oriented perturbed closed curve $\gamma$ of $\bdry S$,  we define $i_{\theta}(L)$ to be the measure of leaves spiraling to $\gamma$, provided the sense of spiraling agrees with the orientation on $\theta$ as shown in the figure.  Again, if the sense is opposite, $i_\theta(L)=0$.

 If $\rho\in \G$ is a non-oriented geodesic spiraling to a close curve $\gamma\in \G$ on the side to which $\gamma'$ is perturbed, then we say $\theta=\gamma'_+$ {\it intersects} $\rho$ provided the sense of the spiraling agrees with the orientation as shown in Figure \ref{Sign}, otherwise it does  {\it not intersect}.   If $\rho, \theta\in \G$ are both non-oriented, they {\it  intersect} if the underly geodesics $\beta,\gamma$ corresponding to $\rho$ and $\theta$ intersect transversely.  In all other cases, $\rho$ and $\theta$ do not intersect.   More precisely, we have a function $\K:\G\times \G\to \{0,\infty\}$ whose value $\K(\rho,\theta)$ is $\infty$ if $\rho$ and $\theta$ intersect, $0$ otherwise.   Assuming we have chosen a hyperbolic structure for the surface and we have made $\beta,\gamma$ geodesic, we define the intersection function $\K$ as follows:
 
 \hop\noindent If $\rho$ and $\theta$ are non-oriented:
 
  $$\K(\rho,\theta)
=\begin{cases}\infty & \mbox{if }\beta,\gamma \mbox{ intersect and do not coincide }\\
                     0& \mbox{ if } \beta,\gamma \mbox{ coincide or are disjoint}
                                                                \end{cases}
$$
\noindent  If $\rho$ and $\theta$ are both oriented $\K(\rho,\theta)=0$.
\hop \noindent  If $\rho$ is not oriented, $\theta$ is oriented:

$$\K(\rho,\theta)
=\begin{cases}
                     \infty& \mbox{ if }\beta\mbox{ spirals to }\gamma\mbox{, with }\theta \mbox{ oriented as in Figure \ref{Sign}}\\
                     0& \mbox{ otherwise. }
                                            \end{cases}
$$
\end{defn}

The intersection function $\K$ is determined by the surface, regardless of the choice of hyperbolic structure for $S$.  The only purpose of the oriented variants is to detect (the measure of) leaves spiraling to a closed curve.

\begin{figure}[H]
\centering
\scalebox{1}{\includegraphics{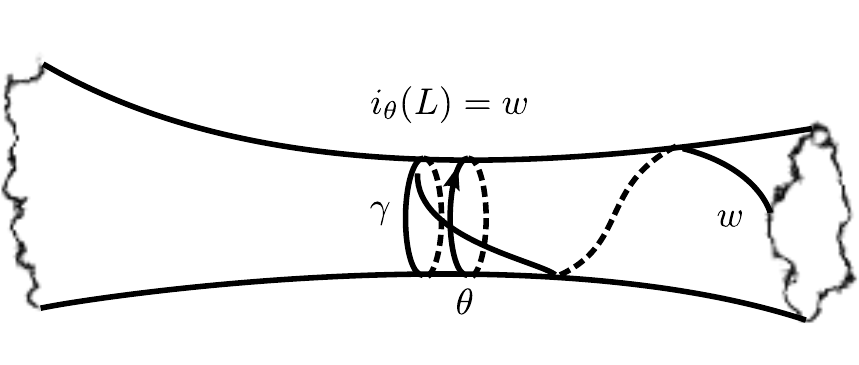}}
\caption{\small Intersection with variant closed curve.}
\label{Sign}
\end{figure}

Whether or not $\theta$ is an oriented variant,  $i_\theta(L,\mu)$ can be infinite, which is why we allow values in $[0,\infty]\subset \bar\reals$.  
A leaf spiraling towards $\gamma$, with finite atomic transverse measure, determines a cohomology class in $z\in H^1(\gamma,\reals)$.  Letting $\theta=\gamma'_+$ be the oriented curve, viewed as an oriented closed path in $\gamma$, then

$$i_{\gamma'_+}(L,\mu)=
\begin{cases}z(\theta)& \mbox{if } z(\theta)> 0 \\
                   0 & \mbox{otherwise}
                                            \end{cases}
$$

$$i_{\gamma'_-}(L,\mu)=
\begin{cases}-z(\theta)& \mbox{if } z(\theta)< 0 \\
                   0 & \mbox{otherwise}
                                            \end{cases}
$$

The above formulas show that we have split one $\bar\reals$-parameter $z$, with positive and negative values, which gives the measure and sense of leaves spiraling to a closed curve of $\bdry S$, into two  $\II$-parameters $i_{\gamma'_+}$ and $i_{\gamma'_-}$.
  
  We need to show that the map $\I$ is injective, and then we will redefine $\WM(S)$ as the topological subspace $\I(\WM(S))\subset [0,\infty]^\G$.

\begin{lemma}\label{InjectiveLem}  The map $(L,\mu)\mapsto \I(L,\mu)=(i_\gamma(L,\mu))_{\gamma\in \G}$ from the set  $\WM(S)$ of weakly measured geodesic laminations to $\II^\G$ is injective. 
\end{lemma}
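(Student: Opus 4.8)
The plan is to show that if $(L,\mu)$ and $(L',\mu')$ are weakly measured geodesic laminations with $i_\gamma(L,\mu)=i_\gamma(L',\mu')$ for every $\gamma\in\G$, then $L=L'$ and $\mu=\mu'$. I would carry this out in two steps: first recover the underlying geodesic lamination, then recover the transverse measure.

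For the first step I would note that the full-support hypothesis reduces the extended-real intersection data to the coarse $\{0,\infty\}$-data of Theorem~\ref{GeodesicLamThm}. If $\gamma\in\G$ is not an oriented variant, then some subarc of $\gamma$ is a transversal actually meeting $L$ exactly when $\gamma$ meets $L$ transversely, so by full support $i_\gamma(L,\mu)>0$ precisely in that case, while $i_\gamma(L,\mu)=0$ both when $\gamma$ is disjoint from $L$ and, by convention, when $\gamma$ is a leaf. For an oriented variant $\gamma'_+$ of a closed geodesic $\gamma$, full support likewise gives $i_{\gamma'_+}(L,\mu)>0$ precisely when some leaves of $L$ spiral onto $\gamma$ from the relevant side in the sense of Figure~\ref{Sign}, since such a family carries positive (possibly atomic) measure. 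Hence the vector in $\{0,\infty\}^\G$ obtained from $(i_\gamma(L,\mu))_{\gamma\in\G}$ by replacing each positive entry by $\infty$ is exactly the image of $L$ under the map of Theorem~\ref{GeodesicLamThm}; since that map $\GL(S)\to\{0,\infty\}^\G$ is injective and this vector agrees for $(L,\mu)$ and $(L',\mu')$, we get $L=L'$.

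With $L=L'$ fixed, I would recover the transverse measure from the numbers $i_\gamma(L,\mu)$ using train-track coordinates together with the variants. Fix a finite train track $\tau$ carrying the part of $L$ having no leaves spiralling onto a closed leaf; on that part a transverse measure is determined by its branch weights in $[0,\infty]$ subject to the switch relations, and for each branch $b$ one can realize a short transversal meeting $L$ only in the leaves running through $b$ and complete it to an embedded geodesic $\gamma_b\in\G$ (closing it up through complementary regions of $L$, or running it out a cusp, and straightening), so that $i_{\gamma_b}(L,\mu)$ is the weight on $b$. The remaining part of the transverse measure is concentrated on families of leaves spiralling onto closed leaves $\gamma$ of $L$, and the total mass of such a family is read directly as $i_{\gamma'_+}(L,\mu)$ for the variant of $\gamma$ with matching sense. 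Since $(i_\gamma(L,\mu))_\gamma$ and $(i_\gamma(L,\mu'))_\gamma$ agree, $\mu$ and $\mu'$ have the same branch weights on $\tau$ and the same spiralling masses, hence assign equal measure to every transversal by invariance and countable additivity, so $\mu=\mu'$.

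The main obstacle is this second step, specifically the realizability claims: that $\G$ contains enough embedded geodesics to isolate each branch weight and to detect each spiralling mass, and that these two mechanisms together account for the entire (possibly locally infinite) transverse measure. Making precise the completion of a transversal to an embedded complete geodesic — especially when the adjacent complementary region is a disk, once-punctured disk, or annulus, so the geodesic must be routed out a cusp or around a core — and verifying that no part of a weakly measured transverse measure escapes reconstruction by these finitely or countably many numbers, is where the real care is required; by contrast, once Theorem~\ref{GeodesicLamThm} is granted the first step is essentially immediate.
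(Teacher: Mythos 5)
You have identified the right danger spot yourself, but the proposal as it stands has two genuine problems. First, step 1 is circular within this paper: the injectivity of $\GL(S)\to\{0,\infty\}^\G$ (Corollary \ref{GeodesicInjectiveCor}, which is what Theorem \ref{GeodesicLamThm} rests on) is deduced \emph{from} Lemma \ref{InjectiveLem}, so it cannot be used as an input to prove it. And even granting it, recovering the underlying geodesic lamination is not the crux of the lemma: two weak measures on the \emph{same} underlying lamination can differ, and what the intersection data must pin down first is the sublamination $L_\infty$ on which the measure is locally infinite, since everything at finer scale than $L_\infty$ is invisible to any geodesic that crosses $L_\infty$.

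Second, step 2 does not go through as set up, beyond the realizability issue you flag. For $[0,\infty]$-valued measures, branch weights on a fixed finite train track do \emph{not} determine the measure: if two disjoint sublaminations run through the same branch and one carries locally infinite measure, the branch weight is $\infty$ no matter what finite measure the other carries, so distinct weak measures on the same lamination yield identical weight vectors (this failure is exactly why the paper isolates $L_\infty$ first). Moreover, the claim that each branch weight equals $i_{\gamma_b}(L,\mu)$ for a single embedded geodesic $\gamma_b\in\G$ is in general false: any completion of a short transversal to a complete geodesic will cross $L$ elsewhere, and one must also control its ends so that it does not acquire infinite measure by spiralling onto a closed boundary curve with the same sense as $L$ or by running into $L_\infty$. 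The paper's proof is organized precisely to avoid both problems: it characterizes $L_\infty$ intrinsically as the union of non-oriented $\gamma\in\G$ with $i_\gamma(L)=0$ that are crossed by no $\rho\in\G$ having finite intersection number, proves this by exhibiting, for any geodesic $\gamma$ in the completion $F$ of $S\setminus L_\infty$, some $\rho$ with $0\le i_\rho(L)<\infty$ crossing it (the cusped-annulus component is the hard case and is handled with an oriented variant of the boundary curve), and then observes that $L\setminus L_\infty$ is an honest $\reals$-measured lamination in $F$, so its recovery from intersection numbers with geodesics of $F\subset\G$ is the already-proved result of \cite{UO:Pair}. To repair your argument you would either need to reproduce that reduction, or prove much stronger (and currently unsupported) statements about isolating individual branches and infinite-measure loci by single elements of $\G$.
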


\begin{proof}  The lemma is proved for ordinary measured laminations in \cite{UO:Pair}.   Recall that even in that setting, a transversal of a measured lamination $L$ in $S$ with one endpoint in a closed curve $\bdry S$ can have infinite transverse measure if the lamination has leaves spiraling to the closed curve.  So even for measured laminations, we must allow infinite transverse measures, but they are finite in the interior of $S$.  We now allow laminations which have transverse measures which need not be locally finite even in the interior of $S$.  Given a weakly measured essential geodesic lamination $(L,\mu)$ in $S$, let $L_\infty$ denote the union of leaves $\ell$ with the property that every open transversal intersecting $\ell$ has infinite measure.  Then $L_\infty$ is clearly closed, therefore a sublamination of $L$. 

We claim that $L_\infty$ is determined as a geodesic lamination by the intersections $i_\gamma(L)$ for $\gamma\in \G$.   In fact, we claim $L_\infty$ is the union of geodesics $\gamma$ in the set 

$$G=\{\gamma\in\G|\text{$\gamma$ is non-oriented and not a component of $\bdry S$, }$$\vskip -0.3in $$i_\gamma(L)=0\text{ and }\nexists \rho\in \G \text{ with } 0\le  i_{\rho}(L)<\infty \text{ and } \rho \text{ intersects }\gamma\}.$$  

\noindent  Note that $G$ is actually determined by the intersection numbers $i_\theta$, since the intersection function $\K$, which tells us when two elements of $\G$ intersect, is given with the surface $S$. 

 To prove the claim, first suppose $\gamma$ is a geodesic in $L_\infty$.  Then $i_\gamma(L)=0$.  Further, if $\rho\in \G$ and $\rho$ intersects $\gamma$, then $i_\rho(L)=\infty$, even if $\rho$ is an oriented closed curve.  This shows $\gamma\in G$.  

Conversely, suppose $\gamma\in G$.  Then $\gamma$ is a non-oriented geodesic, and since $i_\gamma(L)=0$, we know that 
\begin{tightenum}
\item $\gamma$ is a leaf of $L_\infty$, or 
\item $\gamma$ is a geodesic in the completion $F$ of the complement $S\setminus L_\infty$, and $\gamma$ is not a component of $\bdry S$. 
\end{tightenum}
We will derive a contradiction if (ii) holds to finish the proof of our claim.  $F$ is a surface with cusps and $L\setminus L_\infty$ is a measured lamination in $F$, whose measure has full support.     Given $\rho\in \G$, $\rho\subset F$, $i_\rho(L)=i_\rho(L\setminus L_\infty)<\infty$ unless $\rho$ is bi-infinite and approaches a closed curve of $\bdry F$ in the opposite sense that the lamination $L$ does.  Since the component of $\check F$ of $F$ containing $\gamma$ is not a product, it only remains to show that for any essential $\gamma$ in any such $\check F$, we can find an essential $\rho\in \G$ intersecting it with specified senses of spiraling at ends which approach closed boundary curves of $\bdry \hat F$ so that $ 0\le  i_{\rho}(L)<\infty$, giving a contradiction.   This is a relatively easy exercise, but we do need oriented elements of $\G$.  The extreme example where it is most difficult to find a $\rho$ fitting our needs is a component $\check F$ which is an annulus with one cusp on one boundary and with one closed boundary component, a closed curve of $\delta$.  In this case $\gamma$ must be a geodesic with one end in the cusp and the other end spiraling to the closed curve.  One of the oriented variants $\theta$ of the closed curve in $\bdry\check F$ intersects $\gamma$ and has finite intersection with it. 
  
Now we must show that the remainder of $L$ is determined by intersections.  But the remainder of $L$ is a measured lamination in the complement of $L_\infty$, i.e. in the same $F$ as above, so it is determined by intersections with ``geodesics" $\H$ in $F$, by the result in \cite{UO:Pair}.   Here $ \H$ includes oriented boundary closed curves, which are treated like closed curves perturbed to the interior of $F$ with non-trivial intersections with spiral geodesics approaching the closed curve.   Clearly we can regard $\H$ as a subset of $\G$.  \end{proof}

\begin{remark}  There are certain unoriented elements of $\G$ which we call {\it spiral geodesics}.  These are geodesics with at least one end spiraling to a closed curve.  If we examine the above proof, we observe that we do need these elements of $\G$ to ensure that intersection numbers determine $L_\infty$.  However, if $L_\infty$ is empty, i.e. when we are dealing with measured laminations in the sense of \cite{UO:Pair},  the lamination is determined by the other elements of $\G$.
\end{remark}

\begin{remark}  Intersection numbers do not distinguish between $L$ and $\bar L$, the latter being a lamination which includes $\bdry S$.
\end{remark}

It may be worth pointing out the following corollary of Lemma \ref{InjectiveLem}.

\begin{corollary}\label{GeodesicInjectiveCor}  The map $\I:\GL(S)\to \{0,\infty\}^\G$ is injective.
\end{corollary}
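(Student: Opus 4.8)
The plan is to obtain Corollary \ref{GeodesicInjectiveCor} directly from Lemma \ref{InjectiveLem} by exhibiting $\GL(S)$ as a subset of $\WM(S)$. First I would note that every $L\in\GL(S)$ carries a canonical weak transverse measure $\mu_L$ of full support: assign $\mu_L(T)=\infty$ to every (Borel subset of a) transversal $T$ that meets $L$ and $\mu_L(T)=0$ to every transversal disjoint from $L$. A short check shows that $\mu_L$ is countably additive (in a countable disjoint decomposition of a transversal meeting $L$, some piece already meets $L$) and invariant under isotopies of transversals of the admissible type, and it has full support by construction, so $(L,\mu_L)\in\WM(S)$. The assignment $L\mapsto(L,\mu_L)$ is patently injective, since distinct geodesic laminations are distinct as underlying laminations.

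Next I would verify that, under this realization, the intersection numbers $i_\theta(L)$ appearing in the statement (the ones of Theorem \ref{GeodesicLamThm}, with the closed-geodesic variants supplied by this very realization) agree with the restriction to $\GL(S)$ of the intersection numbers $i_\theta(\cdot,\cdot)$ used in Lemma \ref{InjectiveLem}. For a non-variant $\theta=\gamma$, $i_\theta(L,\mu_L)$ is the total $\mu_L$-measure of $\gamma$, which is $\infty$ when $\gamma$ crosses $L$ transversely and $0$ when $\gamma$ is a leaf of $L$ (by the stated convention) or is disjoint from $L$. For an oriented variant of a closed geodesic the defining formula returns the $\mu_L$-measure of a family of spiralling leaves, again $\infty$ or $0$ because $\mu_L$ is everywhere locally infinite. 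Hence $\I(\GL(S))\subseteq\{0,\infty\}^\G\subseteq\II^\G$, and $\I|_{\GL(S)}$ factors as the composite of the injection $L\mapsto(L,\mu_L)$ with the map of Lemma \ref{InjectiveLem}; being a composite of injections, it is injective.

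As an alternative, more self-contained route, one can invoke the explicit reconstruction inside the proof of Lemma \ref{InjectiveLem}: for $(L,\mu_L)$ the sublamination $L_\infty$ equals all of $L$, and that proof identifies $L$ with the union of the geodesics $\gamma$ in the set $G=\{\gamma\in\G:\gamma$ is non-oriented, not a component of $\bdry S$, with $i_\gamma(L)=0$ and no $\rho\in\G$ satisfying $i_\rho(L)=0$ intersecting $\gamma\}$; here the condition $0\le i_\rho(L)<\infty$ of Lemma \ref{InjectiveLem} simplifies to $i_\rho(L)=0$ precisely because all intersection numbers of elements of $\GL(S)$ lie in $\{0,\infty\}$. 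Since $G$ is determined by the family $(i_\theta(L))_{\theta\in\G}$ together with the fixed intersection function $\K$, two elements of $\GL(S)$ with equal $\I$-images yield the same set $G$ and hence are the same lamination.

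I do not expect any real obstacle. The only steps needing a line of care are checking that $\mu_L$ is a genuine invariant Borel transverse measure of full support and that the two occurrences of ``$i_\theta$ on $\GL(S)$'' coincide, both of which are routine.
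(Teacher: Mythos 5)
Your proposal is correct and matches the paper's (implicit) argument: the corollary is obtained exactly by viewing $\GL(S)$ as the subset of $\WM(S)$ consisting of laminations with $L=L_\infty$ (everywhere locally infinite transverse measure) and restricting the injective map of Lemma \ref{InjectiveLem}, which is precisely your main route. Your extra verifications that $\mu_L$ is a legitimate weak measure and that the two definitions of $i_\theta$ agree are the routine details the paper leaves unstated.
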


\begin{defn} The space $\WM(S)$ is the image of the set $\WM(S)$ under the map $\I:\WM(S)\to[0,\infty]^\G$ with the subspace topology.  The space $\PWM(S)$ is the quotient of $\WM(S)$ under the projectivization equivalence in $[0,\infty]^\G\setminus\{0\}$.  Namely if $v,w\in [0,\infty]^\G$ are non-zero then $v\sim w$ if $v=\lambda w$ for some $\lambda\in \reals$, $0<\lambda<\infty$, with the convention that $\lambda\infty=\infty$.
\end{defn}

\begin{remark}  In the space $\WM(S)$ of weakly measured laminations, there is a subspace $\GL(S)$ of laminations such that $L=L_\infty$.  Projectivization has no effect on $\GL(S)\subset \WM(S)$ since equivalence classes are singletons.  Thus $\GL(S)$ is a subspace of $\PWM(S)$.   \end{remark}

\begin{proposition}  The space $\GL(S)$ is a subspace of $\WM(S)$.  The space $\PWM(S)$ is non-Hausdorff, with any neighborhood in $\PWM(S)$ of a lamination $K\in \GL(S)$ containing all projective classes of $[L]\in \PWM(S)$ such that the underlying lamination for $L$ is $K$.  $\PWM(S)$ contains $\GL(S)$ and $\PM(S)$ as subspaces.
\hip
\noindent Suppose $K\in \GL(S)\subset \WM(S)$ is a fixed lamination.  Then the subspace $\M_K(S)$ of laminations $L\in \WM(S)$ such that $L_\infty=K$ is homeomorphic to $\M(F)$, where $F$ is the completion of $S\setminus K$.  The projective image $\PM_K(S)$ of $\M_K(S)$ is a subspace of $\PWM(S)$ homeomorphic to $\PM(F)$.
\end{proposition}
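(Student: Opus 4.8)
The plan is to handle the five assertions in turn, factoring out the point-set bookkeeping of projectivization into a single general observation. \emph{For the first assertion}, under the identification in the preceding Remark, $\GL(S)$ is the set of $L\in\WM(S)$ with $L=L_\infty$; for such an $L$ every $\theta\in\G$ either misses $L$ (so $i_\theta(L)=0$), or crosses $L$ transversely, or detects spiraling of a closed leaf of $L=L_\infty$, and in either of the latter two cases the relevant transversal has infinite measure ($i_\theta(L)=\infty$). Thus $\I(\GL(S))\subseteq\{0,\infty\}^\G$, and since $\{0,\infty\}\subset[0,\infty]$ is a discrete subspace, $\{0,\infty\}^\G$ carries the subspace topology from $[0,\infty]^\G$; hence the topology on $\GL(S)$ from Theorem \ref{GeodesicLamThm} agrees with its topology as a subspace of $\WM(S)\subset[0,\infty]^\G$, and the two notions of $i_\theta$ match. \emph{For the non-Hausdorff statement}, let $q\colon\WM(S)\to\PWM(S)$ be the quotient map and $U\ni[K]$ open, $K\in\GL(S)$. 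Since $\I(K)$ has all coordinates in $\{0,\infty\}$, its scaling orbit is $\{\I(K)\}$, so $\tilde U=q^{-1}(U)$ is an open scaling-saturated neighborhood of $\I(K)$; fix a basic neighborhood of $\I(K)$ inside $\tilde U$ constrained only on coordinates $\theta_1,\dots,\theta_n$. If $L=(K,\mu)$ has underlying lamination $K$, then $i_{\theta_i}(L)=0$ exactly when $i_{\theta_i}(K)=0$ (both detect the same ``$\theta_i$ meets $K$'' alternative, using full support of $\mu$), and otherwise $i_{\theta_i}(K)=\infty$ while $i_{\theta_i}(L)>0$; so for $\lambda$ large, $\I(K,\lambda\mu)$ lands in the prescribed basic neighborhood, and therefore $[L]=[(K,\lambda\mu)]\in U$. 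Since some such $L$ has $[L]\ne[K]$ (e.g. put a locally finite invariant measure of full support on a minimal sublamination of $K$, extended by $\infty$ elsewhere; when $K$ is itself minimal this $L$ is $\reals$-measured), $\PWM(S)$ is not Hausdorff.

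The remaining ``subspace'' claims rest on the following general fact, which I would establish first: if $A\subseteq\WM(S)$ is saturated under scaling by $(0,\infty)$, then, since scaling acts on $[0,\infty]^\G$ by homeomorphisms (so saturations of open sets are open), the induced map $A/{\sim}\to\PWM(S)$ is a topological embedding onto $q(A)$. Applied to $A=\GL(S)$ (saturated because $\lambda\I(K)=\I(K)$, with $\sim$ trivial on it) this gives $\GL(S)\hookrightarrow\PWM(S)$ as a subspace; applied to $A=\M(S)$ (saturated because scaling a finite measure stays finite) it gives $\PM(S)=\M(S)/{\sim}\hookrightarrow\PWM(S)$ as a subspace, using that $\M(S)\subset\WM(S)$ is a subspace inclusion and that the topology on $\M(S)$ from \cite{UO:Pair} coincides with the subspace topology in $[0,\infty]^\G$ (the same comparison of intersection-number parametrizations invoked in the proof of Lemma \ref{InjectiveLem}).

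For $\M_K(S)\cong\M(F)$ I would set up the bijection $\Psi\colon\M(F)\to\M_K(S)$ sending a measured lamination $M$ in $F$ to the weakly measured lamination obtained by gluing $M$ to $K$, with $K$ assigned everywhere-infinite measure; its inverse sends $L$ with $L_\infty=K$ to $\overline{L\setminus K}$, viewed in $F$, a genuinely $\reals$-measured lamination (with the usual infinities allowed only on transversals ending on $\bdry F$). The key point is the behavior of intersection numbers: for $\theta$ in the subset $\H\subseteq\G$ of ``geodesics of $F$'' (including the oriented boundary variants, as in the proof of Lemma \ref{InjectiveLem}) one has $i_\theta(\Psi(M))=i_\theta(M)$, whereas for $\theta\in\G\setminus\H$ the coordinate $i_\theta(\Psi(M))$ is identically $\infty$ (if $\theta$ crosses $K$), identically $0$ (if $\theta$ is an unoriented leaf of $K$), or equals $i_{\theta'}(M)$ for a corresponding $\theta'\in\H$ (if $\theta$ is an oriented variant of a closed leaf of $K$ detecting spiraling into $K$). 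Hence the coordinate projection $[0,\infty]^\G\to[0,\infty]^\H$ restricts on $\M_K(S)$ to a continuous bijection onto $\M(F)$ whose inverse is continuous, because every $\G\setminus\H$ coordinate is a continuous (indeed constant, or a single $\H$-coordinate) function of the $\H$ coordinates; this is the desired homeomorphism $\M_K(S)\cong\M(F)$. It is scaling-equivariant (the $K$-part is unaffected, as $\lambda\cdot\infty=\infty$), so it descends to $\M_K(S)/{\sim}\cong\M(F)/{\sim}=\PM(F)$; together with the embedding fact above (noting $\M_K(S)$ is saturated, since $(\lambda L)_\infty=L_\infty$) this gives $\PM_K(S)\cong\PM(F)$ as a subspace of $\PWM(S)$.

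I expect the main obstacle to be this last step: pinning down the bijection $\M_K(S)\leftrightarrow\M(F)$ across the boundary created by $K$ (how leaves accumulating on or spiraling into $K$ correspond to boundary behavior of $F$) and then carefully matching the $\G$- and $\H$-indexed intersection coordinates, particularly the oriented variants attached to closed leaves of $K$. By contrast, the point-set topology of projectivization is routine, and the first two assertions are nearly immediate once the observation that $\{0,\infty\}\subset[0,\infty]$ is discrete and the rescaling trick are in hand.
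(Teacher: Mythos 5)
Your first three assertions are handled correctly and, at the decisive points, by the same devices as the paper: the identification of $\GL(S)$ inside $\WM(S)$ via $\{0,\infty\}^\G$, and the rescaling trick (multiply $\mu$ by a large $\lambda$ so that the finitely many constrained coordinates of a basic neighborhood of $\I(K)$ are met) is exactly the paper's argument for the non-Hausdorff statement. Where you genuinely differ is the point-set bookkeeping: the paper deduces that $\GL(S)$ and $\PM(S)$ sit as subspaces of $\PWM(S)$ from the claims that $\GL(S)$ is closed and $\M(S)$ is open in $\WM(S)$ (saturated open or closed subspaces pass to subspaces of a quotient), whereas you observe that the scaling group acts by homeomorphisms, so $q$ is open and \emph{any} saturated subspace passes to a subspace of the quotient. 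Your route is cleaner and more robust: it does not need the openness of $\M(S)$ in $\WM(S)$ (an infinite intersection of coordinate conditions, so not obviously open in the product topology), and it is the form of the argument one actually needs for $\M_K(S)$, which is saturated but not obviously open or closed.

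The gap is in the last step, exactly where you predicted. Your trichotomy for the coordinates $\theta\in\G\setminus\H$ --- constant $\infty$ (crossing $K$), constant $0$ (unoriented leaf of $K$), or a single $\H$-coordinate (oriented variant of a closed leaf of $K$) --- omits the unoriented spiral geodesics: geodesics lying in $S\setminus K$ with an end spiraling to a closed leaf of $K$ (or to a closed curve of $\bdry S$). These are excluded from $\H$ by the paper's convention, so they lie in $\G\setminus\H$, but for them $i_\theta(\Psi(M))$ is neither constant nor a single $\H$-coordinate, and it need not even be a continuous function of the $\H$-coordinates. Concretely, let $\gamma$ be a closed leaf of $K$, let $\theta$ spiral to $\gamma$ with one sense, and let $M_n\in\M(F)$ consist of a fixed closed geodesic $c$ with weight $1$ together with a fixed leaf $\ell$ spiraling to the corresponding boundary curve of $F$ with the opposite sense and weight $1/n$. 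Then $M_n\to M_\infty=c$ in every $\H$-coordinate (the oriented variant coordinate at that boundary curve is $1/n\to 0$), but $\theta$ crosses $\ell$ infinitely often, so $i_\theta(\Psi(M_n))=\infty$ for all $n$ while $i_\theta(\Psi(M_\infty))$ is finite; hence the inverse of your coordinate projection is not continuous if $\M(F)$ carries the topology determined by $\H$ alone, and the argument as written does not close. (The paper's own proof is equally terse here: it records only that $L$ is determined by the $\H$-coordinates, which gives injectivity, not the homeomorphism.) To repair the step you must either show the spiral coordinates are continuous functions of the $\H$-coordinates on $\M_K(S)$ --- which the example puts in doubt --- or topologize $\M(F)$ using all geodesics of $F$, spiral ones included; then every $\G\setminus\H$ coordinate really is a constant or an $F$-coordinate (namely $i_\theta(\Psi(M))=i_\theta(M)$, or constant $\infty$ when $K$ itself has leaves spiraling to $\gamma$ against the sense of $\theta$), and the coordinate matching goes through, at the price of reconciling this topology with the one from \cite{UO:Pair}.
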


\begin{proof} The main issue here is showing that the quotient of a subspace is the corresponding subspace of the quotient.  This is true if a subspace is either open or closed.  It is easy to check that $\GL(S)\subset \WM(S)$ is closed:  If $K\in \WM(S)\setminus \GL(S)$, then for some $\theta$, $i_\theta(K)<\infty$ and $\{L\in \WM(S): i_\theta(L)<i_\theta(K)+ 1\}$ is a basis neighborhood of $K$ disjoint from $\GL(S)$.  We have already observed that projective equivalence classes in $\GL(S)$ are singletons, so the quotient map $q: \WM(S)\to \PWM(S)$ restricted to $\GL(S)$ is also a quotient map, hence a homeomorphism.  It is also easy to check that $\M(S)$ is open in $\WM(S)$, since $\M(S)$ is the subspace of $[0,\infty]^\G$ with all coordinates finite, except coordinates corresponding to spiral geodesics described in the above remark. (In \cite{UO:Pair} we use intersection numbers with curves of $\H\subset \G$, which excludes the spiral geodesics, because they are not needed to determine measured laminations.)  Hence the quotient map $q: \WM(S)\to \PWM(S)$ restricted to $\M(S)\subset \WM(S)$ is a quotient map yielding $\PM(S)$.

To see that a neighborhood of $K\in \GL(S)$ contains any projective class $[L]$ such that the underlying lamination of $L$ is $K$, observe that a basis neighborhood of $K$ in $\WM(S)\subset [0,\infty]^\G$ restricts finitely many coordinates $i_{\theta}(L)$ to lie in a neighborhoods of $\infty\in [0,\infty]$.  By multiplying the measure on $L$ by a sufficiently large $\lambda$, we find a representative of the projective class $[L]$ in the basis neighborhood.

Now we consider the second statement of the proposition.  Given a lamination $L\in \M_K(S)$ clearly $i_\theta(L)$ is finite for any $\theta$ with underlying geodesic in $F$ (again excluding geodesics with one end spiraling to a closed curve of $\bdry F$).  By this we mean the underlying geodesic of $\theta$ lies in $S\setminus K$ or if it is an oriented variant, it is perturbed into $S\setminus K$.  If $\H\subset \G$ is the set of geodesics in $F$, excluding geodesics spiraling to closed boundary curves, then $L$ is determined by coordinates in $[0,\infty)^\H\subset [0,\infty]^\G$, so we have an open subspace homeomorphic to $\M(F)$.  Projectivizing, we get a subspace of $\PWM(S)$ homeomorphic to $\PM(F)$.
\end{proof}

Note that at other points of $\PWM(S)$ there can also be non-Hausdorff behavior.

The next task is to topologize the set $\F(S)$ of $\SB$-measured laminations.  To do this, we will again use the fact that the surface with cusps $S$ can be given a hyperbolic structure, since we assume $\chi_g(S)<0$.    We let $\G$ denote the set of geodesics in $S$, again including oriented variants.  

\begin{defn} Suppose $L\in \F(S)$, where $L$ is a proximal representative, with levels $0$ and $h$ represented, then for each $\theta$ in $\G$ we define the {\it intersection} $i_\theta(L)$ of $\theta$ with $L$ as $(j,t)\in \SB$ where $j$ is the largest $i$ such that $t=i_\theta(L_i)\ne 0$, when $L_i$ is regarded as a weakly measured lamination in $S_i$, the completion of $S\setminus \cup_{k>i}L_k$, and $\theta$ is regarded as an (oriented) curve in the same surface.  
If $\theta$ intersects {\it no}  leaves of $L$, or if $\theta$ is not oriented and coincides with a leaf of $L$  we define $i_\theta(L)=0\in \SB$.  
For every finite height essential lamination $(K,\nu)\in \F(S)$ we now have a function $\I(K,\nu):\G\to \SB$, namely ,  $\I(K,\nu)(\theta)=i_\theta((K,\nu))$, which can be regarded as a point in the product $\SB^\G$.  
\end{defn}

Clearly the description of $L$ in the definition above also gives a transverse $\SB$-measure for $L$, and $i_\theta(L)$ can be interpreted as an intersection in terms of the transverse $\SB$-measure on $L$, even when $\theta$ is an oriented variant.  Thus the alternate definition is as follows.  If $\gamma$ is the underlying geodesic for $\theta$, and $\theta$ is not oriented, $i_\theta(L)$ is the induced $\SB$-measure on $\gamma$.  If $\theta$ is oriented and the underlying geodesic is a closed curve $\gamma$ in $L$ then $\theta$ comes with a direction of normal perturbation and an orientation.  Then $i_\theta(L)$ is the $\SB$-measure of leaves of $L$ spiraling to $\gamma$ on the appropriate side of $\gamma$, provided the sense of spiraling agrees with the orientation of $\theta$.   Otherwise $i_\theta(L)=0$.

If we can show that $\F(S)$ maps injectively into $\SB^\G$, we can define a topology on $\FD (S)$ as a subspace of $\SB^\G$, and then we can view $\FC(S)$ as a subspace.  This approach does indeed work.  For the proof, we define $\FD_k(S)$ as the set of $\SB$-measured laminations of height $\le k$.  

\begin{proposition}  \label{InjectiveProp} The map $\I:\F(S)\to \SB^\G$ defined by $\I(L)=(i_\gamma(L))_{\gamma\in\G}$ is an injection.
\end{proposition}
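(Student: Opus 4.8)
The plan is to prove injectivity by induction on the height $k$, using the function $p$ that erases the lowest level, together with the already-established injectivity of $\I$ on the single-level spaces $\WM(S)$ (Lemma \ref{InjectiveLem}). I would organize the argument around the following observation: given $(i_\theta(L))_{\theta\in\G}\in\SB^\G$, one can read off, for each $\theta$, the pair $(\level(i_\theta(L)),\real(i_\theta(L)))$, and the collection of levels alone already encodes a great deal of the stratified structure of $L$.

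First I would recover the top level $L_h$ and its $\bar\reals$-measure. Let $m=\max_\theta\level(i_\theta(L))$; by the definition of $i_\theta$ for a proximal representative, $m=h$ and $L_h$ is detected exactly as in the proof of Lemma \ref{InjectiveLem} applied inside $S$: the sublamination $L_h$ (more precisely its part $Y_h$ where the transverse measure is locally infinite) is the union of those non-oriented geodesics $\gamma\in\G$, not components of $\bdry S$, with $\level(i_\gamma(L))<h$ together with the condition that every $\rho\in\G$ meeting $\gamma$ has $\level(i_\rho(L))\ge h$ — this is the analogue of the set $G$ in that proof, with "finite/infinite" replaced by "level $<h$ / level $=h$". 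The remaining leaves of $L_h$ (the $\reals$-measured part $X_h$) and the transverse measure $\nu_h$ are then recovered by noting that $\real(i_\theta(L))=i_\theta(L_h,\nu_h)$ for every $\theta$ whose underlying geodesic lies in the completion $S_h$ of $S\setminus L_h$ and meets $L_h$, and applying injectivity of $\I$ on $\WM(S_h)$. The key point making this work is that the level of $i_\theta(L)$ is, by construction, the largest $i$ with $i_\theta(L_i)\ne 0$, so for $\theta$ lying in $S_h$ the level $h$ contribution is exactly the $L_h$-intersection and nothing below interferes.

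Next, having determined $(L_h,\nu_h)$ from $\I(L)$, I would show that $\I(p(L))$ is determined by $\I(L)$. Here $p(L)=(L_0,\ldots,L_{h-1})$ lives in $\F(S_h)$ where $S_h$ is the completion of $S\setminus L_h$, and every $\theta'\in\G(S_h)$ can be realized by a geodesic in $S$ avoiding $L_h$, i.e.\ an element (or oriented variant) of $\G=\G(S)$; for such $\theta'$ one has $i_{\theta'}(p(L))=i_{\theta'}(L)$ since erasing $L_h$ and relabelling does not change intersections with curves disjoint from $L_h$. By the inductive hypothesis $\I$ is injective on $\F(S_h)$ (which has height $\le h-1$), so $p(L)$ is recovered, and combined with the recovered $(L_h,\nu_h)$ this reconstructs $L$. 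The base case $k=0$ is precisely Lemma \ref{InjectiveLem} (with the total-height-$0$ laminations, i.e.\ $\F(S)$ of height $0$, being exactly $\WM(S)$).

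The main obstacle I anticipate is the bookkeeping around which curves in $\G(S_h)$ correspond to which in $\G(S)$ — in particular handling the oriented variants of closed boundary curves of $S_h$ that arise from closed leaves of $L_h$ or from leaves of lower-level $L_i$ spiraling toward $\bdry S_h$. One must check that the oriented variants needed to detect spiralling of $L_{h-1}$-leaves toward components of $\bdry S_h$ are available as oriented variants (or spiral geodesics) already present in $\G(S)$, so that $\I(p(L))$ really is a restriction of $\I(L)$ and the inductive hypothesis applies verbatim; this is the same delicacy that appears in the proof of Lemma \ref{InjectiveLem} (the annulus-with-one-cusp example) and I would handle it by the same case analysis, now carried out one level at a time. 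A secondary, more routine point is verifying that the "level function" $\theta\mapsto\level(i_\theta(L))$ genuinely pins down $L_h$ as claimed, i.e.\ that a geodesic in $S_h$ with finite $L_{h-1}$-intersection exists meeting any prescribed essential geodesic there — again exactly the exercise settled inside Lemma \ref{InjectiveLem}.
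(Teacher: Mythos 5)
Your proposal is correct and is essentially the paper's own argument: induction on the height, with Lemma \ref{InjectiveLem} serving both as the base case and as the tool for recovering the top level $(L_h,\nu_h)$ from the level-$h$ part of the intersection data, followed by the inductive hypothesis applied to the remaining height-$(h-1)$ lamination in the completion $F$ of $S\setminus L_h$, whose relevant geodesics are identified with a subset of $\G$. The only caveat is notational: the paper's map $p$ erases the \emph{lowest} level, whereas the operation you actually invoke strips off the top level $L_h$ --- which is exactly the peeling the paper's proof performs, so this does not affect the argument.
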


\begin{proof}  The hard work has already been done in Proposition 4.2 of \cite{UO:Pair} and Lemma \ref{InjectiveLem}.  We have injectivity for the map sending a weakly measured lamination $L$ in  $S$ 
to a point $(i_\gamma(L))_{\gamma\in\G}\in[0,\infty]^\G$.

We prove by induction that a finite height lamination $L=L_0\cup L_1\cup\cdots \cup L_h$ is determined by its intersection numbers with curves of $\G$.   If $h=0$, $L$ has height 0, and $L$ is a weakly measured lamination in $S$, so it is determined by its intersection numbers.   Now suppose we have shown that height $h-1$ laminations are determined by intersection numbers with values in $\SB$.   Given a height $h$ lamination $L=L_0\cup L_1\cup\cdots \cup L_h$, we then know that the height $h-1$ lamination $L_0\cup\cdots \cup L_{h-1}$ in the completion $F$ of $S\setminus L_h$ is determined by intersection numbers with curves of $\G$ in $F$.  Further, we know that the weakly measured lamination $L_h$ in  $S$ is determined by intersection numbers with elements of $\G$. This shows that the entire lamination is determined by $\SB$-intersection numbers.  
\end{proof} 

\begin{remark} Again we remark that if $\bar L$ denotes $\bar L_0\cup \bar L_1\cup\cdots \cup \bar L_h$, the latter is not distinguished from $L$ by intersection numbers, but the modification replacing $L$ by $\bar L$ is done in a standard way, so $\bar L$'s are distinguished from each other.  
\end{remark}

\begin{defns} \label{SpacesDefns} The topological space of finite height laminations $\F(S)$ is the image $\I(\F(S))$ in $\SB^\G$ with the subspace topology.    The {\it subspace space of contiguous finite height laminations}, $\FC(S)$ is the subspace of contiguous finite height laminations, which were defined in Definition \ref{ContiguousDef}.  The {\it space of finite height measured laminations}, $\FM(S)$ is the subspace of finite height measured laminations in $\F(S)$.  The {\it  subspace of contiguous finite height measured laminations}, $\FCM(S)$ is the subspace in $\FC(S)$ of laminations which are both finite height measured and contiguous.

Two points in $u,v\in \SB^\G$ are {\it projectively equivalent} if there exists $\lambda\in \reals$, $0<\lambda<\infty$ such that  $u=\lambda v$.   Thus we obtain a quotient under this equivalence relation of the injective image $\I( \F(S))\setminus \{0\}$, which we call $\PF(S)$.  There is a subspace $\PFM(S)$, the {\it projective finite height measured lamination space}.   $\PFC(S)$, the {\it contiguous projective finite height lamination space} is the projective quotient of $\FC(S)$; and $\PFCM(S)$, the {\it contiguous projective finite height measured lamination space} is the projective quotient of $\FCM(S)$.
\end{defns}

\begin{remark}  There is a lemma buried in the above definition.   Again, it is useful to know, for example, that $\PFM(S)$ is actually a subspace of $\PF(S)$.  This can be verified using the fact that $\FM(S)$ is open in $\F(S)$, showing that the projective quotient map on $\F(S)$ restricts to a quotient map on $\FM(S)$. The openness  follows from the fact that for non-spiral elements $\theta\in \G$, $i_\theta(L)$ is finite in its level, and that this characterizes finite height measured laminations.
\end{remark}

\begin{proof}[Proof of Theorem \ref{SpacesThm}]  The theorem follows from Proposition \ref{InjectiveProp} and Definitions \ref{SpacesDefns}.
\end{proof}

\begin{proof}[Proof of Theorem \ref{GeodesicLamThm}]  As we observed in Corollary \ref{GeodesicInjectiveCor}, restricting the map
$\I:\WM(S)\to [0,\infty]^\G$ described in Lemma \ref{InjectiveLem} to the set $\GL(S)\subset \WM(S)$, we have an injective map $\I:\GL(S)\to [0,\infty]^\G$ with values in $\{0,\infty\}^\G$,  so $\GL(S)$ obtains a topology as a subspace of $\{0,\infty\}^\G$.  Furthermore, $\GL(S)$ becomes a subspace of $\WM(S)\subset [0,\infty]^\G$.  From Definition \ref{WeakDefn}, $\F(S)\subset\WM(S)^{\naturals_0}$ as a subspace, and $\WM(S)$ is contained in the height 0 laminations as $\{(W,\emptyset,\emptyset,\ldots)\in \WM(S)^{\naturals_0}: W\in \WM(S)\}$, so $\F(S)$ contains a homeomorphic copy of $\GL(S)$ as a subspace of the height 0 laminations.

Next we show $\GL(S)$ is totally disconnected. Suppose $L_1,L_2\in \GL(S)$, $L_1\ne L_2$.   Without loss of generality there exists a non-oriented $\theta\in \G$ such that $\theta$ is a leaf of $L_1$ but is not a leaf of $L_2$.  If we can find such a $\theta$ which intersects $L_2$ (transversely), then $i_\theta(L_1)=0$ while $i_\theta(L_2)=\infty$.  Then the sets $U=\{L\in \GL(S): i_\theta(L)=0\}$ and $V=\{L\in \GL(S): i_\theta(L)=\infty\}$ are disjoint, non-empty open sets which exhaust $\GL(S)$, so $L_1$ and $L_2$ cannot be in the same component.  In the remaining case, we can assume that $L_1$ and $L_2$ intersect in a sublamination of each, possibly the empty lamination.  Without loss of generality, there is a leaf $\gamma$ of $L_1$ which is not a leaf of $L_2$ and is in the complement of $L_2$.  If $\hat F$ denotes the complement of $L_2$ and $F$ denotes the completion of $\hat F$, we can regard $\gamma$ as an essential geodesic in $F$, not a component of the boundary.  Then as in the proof of Lemma \ref{InjectiveLem}, we conclude that there is a $\rho\in\G$,  whose underlying geodesic is in $F$, such that $\rho$ intersects $\gamma$.  This means $i_\rho(L_1)=\infty$ and $i_\rho(L_2)=0$.  As before, this shows that $L_1$ and $L_2$ are not in the same component.
\end{proof}

\begin{proof}[Proof of Theorem \ref{DefnsThm}]  The elements of $\F(S)$ can be described in two ways:

(1) The set of essential laminations with $\SB$-measures up to level alignment.  The proof of Lemma \ref{DefnsLemma} shows that every such lamination is equivalent by level alignment uniquely to $L=L_0\cup L_1\cup \cdots \cup L_{h}$ where each $L_k$  is non-empty and weakly measured in $S\setminus (L_{k+1}\cup L_{k+2}\cup \cdots \cup L_{h})$.

(2) The set of essential laminations $L$ uniquely expressible as $L=\bigcup_{i\in \nat} W_i$ where $W_i\supset W_{i+1}$, $W_i$ is weakly measured in $S$, and $L$ satisfies the conditions described in Definition \ref{WeakDefn}.

We have to show that the following two topologies are the same:

(1) The topology induced by the injection $\I=(i_\theta)_{\theta\in\G}:\F(S)\to \SB^\G$.

(2) The topology induced by an injection $\F(S)\to \WM(S)^{\nat}$.

The topology on $\WM(S)$ is induced by the maps $(j_\theta)_{\theta\in \G}:\WM(S)\to [0,\infty]^\G$.  The topology from  $\WM(S)^{\nat}$ is induced on $\F(S)$ by $\J:\F(S)\to \left([0,\infty]^\G\right)^{\nat}$ defined by $\J(L)=\left((j_\theta(W_{i})_{\theta\in \G}\right)_{i\in \nat}$.  Reordering the factors of the product, the topology is induced by the injection $\J(L)=\big((j_\theta(W_{i})_{i\in \nat}\big)_{\theta\in \G}$.  Then we observe that
$\big(j_\theta(W_{i})\big)_{i\in \nat}\in \SBB\subset [0,\infty]^\nat$, so $\J(L)\in \SBB^\G$.  If we use our identification of $\SB$ and $\SBB$, described in Proposition \ref{LHomeoProp}, we now only need to show $\I=\J$ to show the two topologies coincide.  Here $\I:\F(S)\to \SB^\G$ is the function with coordinates $i_\theta:\F(S)\to \SB$.  But this follows from $\psi i_\theta(L)=\big( j_\theta(W_{i})\big )_{i\in\nat}$, where $\psi$ is the isomorphism $\psi:\SB\to \SBB$.
\end{proof}

\begin{proof} [Proof of Corollary \ref{MeasuredRedefineCor}]  This follows immediately from the proof of Theorem \ref{DefnsThm}.
\end{proof}


\begin{proof} [Proof of Proposition \ref{MeasProp}]  The laminations in $p\inverse(K)\subset \F(S)$ are those that have the form $(L_0,K_0,K_1,\ldots ,K_h)$.  We have a map $f:p\inverse(K)\to \WM(F)$, $f(L_0,K_0,K_1,\ldots ,K_h)=L_0$.  To see that this is a continuous map, we think of $(L_0,K_0,K_1,\ldots ,K_h)$ as an element $(W_0,W_1,\ldots, W_{h+1},\emptyset,\ldots)\in (\WM(S)^\nat$.  Projection to the first factor gives $W_0$ which is a weakly measured lamination $W_0\in \WM(S)$ with infinite transverse measure on $K$.
Let $\K\subset \G$ be the set of geodesics in $F$, including oriented variants.  Then the projection $\pi:\bar\reals^\G\to \bar\reals^\K$ is continuous and $\pi((W_0,W_1,\ldots, W_{h+1},\emptyset,\ldots))=L_0$ corresponds to the lamination $L_0\embed F$, $L_0\in \WM(F)$.  This shows $f$ is continuous, a composition of projections in two products.  To construct a continuous inverse function $g:\WM(F)\to p\inverse(K)\subset \F(S)$, we first construct a continuous function $h:\WM(F)\to \WM(S)$ with $h(L)=K\cup L$ with the measure on $L$ unchanged and with infinite transverse measure on $K$.  In terms of coordinates $i_\theta(h(L))=i_\theta(L)$ if $\theta\in \K$, otherwise $i_\theta(h(L))=i_\theta(K)$.   This shows $h$ is continuous.  Now  regard $\F(S)$ as a subspace of $\WM(S)^\nat$ again, and define $g:\WM(F)\to p\inverse(K)$ by $g(L)=(h(L),W_1,W_2,\ldots W_{h+1})$, where  $K=(W_1,W_2,\ldots W_{h+1})\in WM(S)^\nat$.  This shows $g$ is continuous.
\end{proof}

\begin{lemma} \label{DepthBoundLemma} Consider finite height laminations $L=L_0\cup L_1\cup \cdots L_h$ with the property that each $L_i\ne\emptyset$.  For a fixed surface-with-cusps $S$, there is an integer $m$ such that every essential finite height lamination $L$ as above has height $h\le m$.
\end{lemma}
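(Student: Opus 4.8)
The plan is to bound the height by a complexity count: each time we pass from level $i$ to level $i+1$ in a finite height lamination $L = L_0 \cup \cdots \cup L_h$ with every $L_i \ne \emptyset$, the ambient surface-with-cusps is replaced by the completion of its complement, and we argue that this strictly decreases a non-negative integer invariant built from the geometric Euler characteristic together with the number of boundary curves and cusps. First I would recall that, by Definition \ref{FDepthDef}, $L_{i+1} \cup \cdots \cup L_h$ is an essential lamination in $S$, and that $\hat S_{i+1} = S \setminus (L_{i+2} \cup \cdots \cup L_h)$ has a completion $S_{i+1}$ which is again a surface-with-cusps; the lamination $L_{i+1}$ sits inside $S_{i+1}$ with no $\bdry$-parallel leaves, and $L_i \cup \cdots \cup L_0$ is then a (nonempty) essential lamination in the completion $F$ of $S_{i+1} \setminus L_{i+1}$. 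The key point is that cutting a surface-with-cusps along a nonempty essential lamination with no $\bdry$-parallel leaves, and then passing to the completion, produces a surface-with-cusps of strictly smaller complexity, where ``complexity'' is the number I will now pin down.

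The complexity I would use is (roughly) $c(S) = -3\Chi_g(S) + (\text{number of boundary/cusp curves and arcs of } \alpha)$, i.e. essentially the dimension count appearing in Theorem \ref{MeasuredSpaceThm}, or even more crudely $c(S) = $ the number of pieces in a pants-type decomposition plus the number of cusps. The steps would be: (1) show $c(S) \ge 0$ always, with $c(S) = 0$ only for surfaces carrying no nonempty essential lamination without $\bdry$-parallel leaves (disks with cusps, annuli, etc. — exactly the surfaces where the finite height structure must terminate); (2) show that if $L_{i+1}$ is a nonempty essential lamination in $S_{i+1}$ with no $\bdry$-parallel leaves, then the completion $F$ of $S_{i+1}\setminus L_{i+1}$ satisfies $c(F) < c(S_{i+1})$ — this is the heart of the matter, and it follows because cutting along even a single essential non-$\bdry$-parallel closed geodesic or non-peripheral essential arc reduces the Euler-characteristic/boundary count, and the completion operation only converts some boundary curves to cusps without increasing $c$; (3) conclude that the chain $S = S_h, S_{h-1}, \ldots, S_0$ has strictly decreasing complexity, hence $h \le c(S) =: m$, a number depending only on the fixed surface-with-cusps $S$.

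I expect step (2) to be the main obstacle. The subtlety is that $L_{i+1}$ need not be a single curve or arc — it can be a genuine (possibly measured, possibly not) lamination — so I cannot just invoke ``cut along a curve.'' The clean way around this is to observe that $L_{i+1}$ being essential and without $\bdry$-parallel leaves means that any train track $\tau$ carrying $L_{i+1}$ fills some nonempty essential subsurface, and that passing to the complement strictly drops the complexity of that subsurface; combined with the behavior of $c$ under taking completions (which I would verify directly from the definition of $\Chi_g$ in \cite{UO:Pair}, noting that replacing a boundary curve by a cusp or truncating a spiraling end does not raise $c$), this gives the strict inequality. A secondary point needing care is that the lamination $L_i \cup \cdots \cup L_0$ must actually be nonempty and essential in $F$ (guaranteed by the hypothesis that each $L_i \ne \emptyset$ together with the essential-lamination condition in Definition \ref{FDepthDef}), so that $c(F) > 0$ is forced whenever $i \ge 1$ and the induction can proceed. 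Once these are in hand the bound $h \le m$ is immediate.
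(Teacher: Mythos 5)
Your overall strategy --- bound the height by an Euler-characteristic bookkeeping argument applied to the chain $S=S_h, S_{h-1},\ldots$ of completions --- is exactly the approach the paper intends (its proof is literally the remark that this is ``an easy exercise involving the geometric Euler characteristic''). But your step (2), which you correctly identify as the heart of the matter, is false as stated, and with it the strict decrease of your complexity. The geometric Euler characteristic is \emph{additive} under cutting along an essential geodesic lamination and passing to the completion: $\sum_i\Chi_g(F_i)=\Chi_g(S_{i+1})$ (the paper uses precisely this in the proof of Theorem \ref{InvariantThm}), so the $-3\Chi_g$ part of $c$ never drops. Meanwhile boundary/cusp counts do not drop either: cutting along a single non-separating, non-$\bdry$-parallel closed geodesic leaves both $\Chi$ and $\Chi_g$ unchanged and creates two new boundary circles, so with your sign convention $c(F)>c(S_{i+1})$, and even with the opposite sign ($-3\Chi_g$ minus a cusp count, i.e.\ the dimension in Theorem \ref{MeasuredSpaceThm}) the quantity is merely unchanged, not strictly smaller. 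The ``cruder'' variant (pants count plus cusps) fails the same way, since the pants count is an Euler-characteristic quantity and cusps only ever get created. So no version of the invariant you propose is strictly monotone along the chain, and the induction does not close.

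The repair needs two separate counts rather than one monotone $c$. Levels whose lamination contains a leaf that is not a closed geodesic create at least one new boundary cusp in the completion; boundary cusps persist under all later cuts, and their total number at any stage is bounded by a constant times $|\Chi_g(S)|$ (each component has $\Chi_g\le -\tfrac12$ and $\Chi_g=\Chi-c/2$ with $\Chi\le 1$, together with additivity), so there are at most boundedly many such levels. Levels consisting entirely of closed geodesics contribute, over all levels, a family of pairwise disjoint and pairwise distinct closed geodesics of $S$ (the paper realizes all levels simultaneously as geodesics in one hyperbolic structure), hence pairwise non-isotopic curves, of which $S$ carries at most $3g-3+n$-many; alternatively, the quantity $3g-3+\#\bdry$ summed over components does strictly decrease under a cut along a non-peripheral closed curve, but note it is \emph{not} monotone for cusp-creating cuts (a pair of pants cut along a spiraling geodesic yields an annulus with two boundary cusps, for which this count goes up), which is why the two cases must be handled separately. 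Combining the two bounds gives the integer $m$ and completes the argument in the spirit the paper indicates.
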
 
\begin{proof}  This is an easy exercise involving the geometric Euler characteristic.
\end{proof}

We will say that $m$ is the {\it maximum total height} in $S$.

\begin{lemma}\label{InvariantLemma} Let $S$ be a connected surface (with cusps and boundary curves) satisfying $\chi_g(S)<0$.  Suppose $S$ is not closed and not a disk with boundary cusps.  If $f:S\to S$ is an orientation-preserving automorphism, then there is a non-trivial measured lamination $(L,\mu)$ in the sense of \cite{UO:Pair}, such that $f((L,\mu))=\lambda(L,\mu)$ up to isotopy, with $\lambda>0$. 
\end{lemma}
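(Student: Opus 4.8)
The plan is to let $f$ act on a compact space of projective laminations and apply a fixed point theorem. First I would note that an orientation preserving automorphism of $S$ carries essential (weakly) measured laminations to essential (weakly) measured laminations, and, since $i_\theta(f(L,\mu))=i_{f\inverse(\theta)}(L,\mu)$ while $f\inverse(\theta)$ is isotopic to a unique element of $\G$, it merely transports the intersection coordinates of Proposition \ref{InjectiveProp} and Theorem \ref{MeasuredSpaceThm} into one another. Hence $f$ induces a homeomorphism $f_*$ of $\M(S)$ commuting with scalar multiplication, and a homeomorphism $\bar f_*$ of $\PM(S)$. A fixed point of $\bar f_*$ is exactly a class $[(L,\mu)]$ with $f(L,\mu)=\lambda(L,\mu)$, $\lambda>0$, and it is automatically non-trivial, since every point of $\PM(S)$ is a non-empty lamination; moreover $\PM(S)\ne\emptyset$ because $S$, being neither closed nor a disk with boundary cusps, has $\M(S)\ne\{0\}$. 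So the task is to produce a fixed point of $\bar f_*$ (or to treat by hand the situations where none exists).

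Next I would invoke Theorem \ref{MeasuredSpaceThm}: $\PM(S)=\PM(\bar S,\alpha)$ is the join $S^{\,p}\ast\Delta^{\,b-1}$, where $b$ is the number of closed curves of $\alpha$, i.e. the number of interior cusps of $S$. When $b\ge 1$ the simplex factor is non-empty, and the join of a sphere with a non-empty simplex is an iterated cone on that sphere (still a closed ball even if $p=-1$, when $S^{p}=\emptyset$). Then $\bar f_*$ is a continuous self-map of a closed ball, and Brouwer's fixed point theorem finishes the argument. This disposes of every $S$ that has an interior cusp.

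The remaining case, $b=0$, is the one I expect to be the main obstacle, because then $\PM(S)=S^{\,p}$ is only a sphere and Brouwer does not apply directly. Here I would instead run the classical Thurston argument on Teichm\"uller space: the dual surface $\DS$ carries complete hyperbolic structures (as $\chi_g(S)<0$), its Thurston compactification $\overline{\mathcal{T}(\DS)}=\mathcal{T}(\DS)\cup\PML(\DS)$ is a closed ball whose boundary sphere is $\PML(\DS)\cong\PM(S)$, and $f$ acts on $\mathcal{T}(\DS)$ by pullback of hyperbolic structures, extending continuously to this ball. Brouwer yields a fixed point $x$. If $x\in\PML(\DS)$ we are done as before. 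If $x\in\mathcal{T}(\DS)$, then $f$ is isotopic to an isometry of a complete hyperbolic structure on $\DS$, hence on $S$, so $f$ is periodic; I would then exhibit an invariant lamination directly: fixing an $f$-invariant metric, the quotient hyperbolic orbifold $S/\langle f\rangle$ is, under our hypotheses (this is exactly where ``$S$ not closed'' and ``$S$ not a disk with boundary cusps'' are used), rich enough to contain an essential simple closed geodesic, whose full preimage is an $f$-invariant simple closed multi-geodesic $(L,\mu)$ with $f(L,\mu)=(L,\mu)$, so $\lambda=1$ works.

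The hard part will be the technical input for the $b=0$ case, namely that the Thurston compactification of the Teichm\"uller space of a surface with cusps and boundary is a closed ball with the sphere $\PM(S)$ of Theorem \ref{MeasuredSpaceThm} as its boundary and with the mapping class group acting continuously on it --- the Fathi--Laudenbach--Po\'enaru picture, adapted to this paper's conventions (geodesic boundary and boundary cusps), which requires a short verification. The remaining ingredients --- that $f$ permutes the intersection coordinates, that $\M(S)\ne\{0\}$, and the orbifold argument in the periodic subcase --- are routine.
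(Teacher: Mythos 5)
Your case $b\ge 1$ is essentially the paper's central step: $f$ acts continuously and projectively on $\PM(S)$, which by Theorem \ref{MeasuredSpaceThm} is a ball once the simplex factor $\Delta^{b-1}$ is non-empty, and Brouwer gives the eigen-lamination. (The paper only invokes this after first stripping off boundary cusps, but applying it directly for any $b\ge1$ is fine.) The divergence, and the gap, is in your $b=0$ case. First, the input you defer --- that the Thurston compactification of $\mathcal{T}(\DS)$ is a closed ball whose boundary sphere is the paper's $\PM(S)$, with the mapping class group acting continuously --- is not a short verification in this setting: when $d>0$ the paper's $\M(S)$ contains laminations with leaves spiraling into closed boundary geodesics (infinite transverse measure near $\bdry S$, corresponding to \emph{incomplete} structures on the dual), which lies outside the standard Fathi--Laudenbach--Po\'enaru picture. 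The paper sidesteps this entirely: for $b=0$, $c=0$, $d>0$ it passes to the dual surface $\DS$, where the $d$ boundary curves of $S$ become interior cusps, so $\PM(\DS)$ is again a ball by Theorem \ref{MeasuredSpaceThm}; Brouwer applied there gives an invariant lamination in $\DS$, which is converted back to $S$ by replacing ends going into cusps of $\DS$ by ends spiraling to the corresponding closed boundary curves of $S$ (with a sense of spiraling fixed by the orientation). No Teichm\"uller theory is needed.

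Second, your periodic subcase is genuinely false as stated. Take $S$ a trim annulus: underlying surface an annulus, one boundary a closed curve ($d=1$), the other carrying $c\ge1$ boundary cusps, so $b=0$ and $\chi_g<0$, and $S$ is neither closed nor a disk with boundary cusps. Every simple closed curve in $S$ is either inessential or isotopic to the closed boundary component, hence is \emph{not} an essential lamination in the paper's sense; so there is no ``essential simple closed geodesic in $S/\langle f\rangle$ with essential preimage,'' contrary to your claim that the hypotheses guarantee one. Moreover every orientation-preserving automorphism of a trim annulus is periodic, so your argument is forced into exactly this broken branch. The paper handles this case by hand: the invariant lamination consists of $c$ non-compact geodesics, one end of each in a boundary cusp and the other spiraling onto the closed boundary curve with a uniformly chosen sense of spiraling --- an object that is not a multicurve at all. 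To repair your proposal you would need either this explicit construction (and a reduction isolating the boundary-cusped pieces, as in the paper's curve system $C$ of curves parallel to the cusped boundary components), or a separate argument producing a fixed point of $\bar f_*$ on the sphere $\PM(S)$ in the periodic case.
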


\begin{proof}  Recall that we use $b$ to denote the number of ``interior cusps," corresponding to closed curves of $\alpha$ in the truncated version $(S,\alpha)$ of $S$.  We use $c$ to denote the number of boundary cusps.  Let us use $d$ to denote the number of closed boundary curves of $S$.  

Suppose first that $c>0$.  This means that some of the boundary components of the underlying truncated surface $\bar S$ contain arcs of $\alpha$.  Consider a curve system $C$ in $\bar S$ consisting of closed curves, each boundary-parallel to a component of the boundary of the underlying surface of $\bar S$ containing arcs of $\alpha$ but not isotopic to a component of $\delta$, the closure of $\bdry S\setminus \alpha$, and with each such isotopy class represented exactly once.  (If the underlying surface of $S$ is an annulus, $C$ could consist of one curve boundary parallel on both sides.)   Then $C$ cuts off a surface whose components are cusped annuli (each with $d=1$ and $c\ge 1$).  These annuli were called {\it trim annuli} in \cite{UO:Pair}.   In the special case that $\bar S$ has underlying surface an annulus, with $d=1$, we can take $C=\emptyset$, and $\bar S$ is itself a trim annulus.  Up to isotopy, $f$ preserves $C$ and it preserves the union of trim annuli.  So we are reduced to the case of automorphisms of disjoint unions of trim annuli and of surfaces with $c=0$.  The latter are surfaces with interior cusps and boundaries, $b+d>0$.

We first deal with automorphisms of surfaces with $c=0$, $b+d>0$.  From Theorem \ref{MeasuredSpaceThm}, we know that if $b>0$, the projective measured lamination space $\PM(S)$ is homeomorphic to a ball of some dimension.  Therefore, by the Brouwer Fixed Point Theorem, the map induced by $f$ on $\PM(S)$ has a fixed point, and the fixed point yields a lamination $(L,\mu)$ such that $f((L,\mu))=(L,\lambda\mu)$ for some $\lambda>0$.   If $b=0$, then $d>0$ and $f$ induces an automorphism $\breve f:\breve S\to \breve S$ of the dual surface $\breve S$.  So we conclude that $\breve f$ has an invariant measured lamination $\breve L$ by the Brouwer Fixed Point Theorem, since $\PM(\breve S)$ is a ball.   The lamination $\breve L$ can be converted to an invariant measured lamination for $f:S\to S$ by replacing each end of a leaf which is mapped to cusps of $\breve S$ by an end which spirals towards the corresponding boundary component of $S$.  We choose the sense of spiraling from the orientation of $S$ using the same convention on all boundary components.  This guarantees that the resulting measured lamination $L$ is invariant for $f$.  

It remains to consider automorphisms of  a trim annulus.  Suppose $S$ is a trim annulus, with $d=1$ and $c>0$.    In this case consider a lamination $L$ consisting of $c$ geodesics, each spiraling to the closed boundary curve and each having its opposite end in a boundary cusp.  The geodesics are chosen so that each boundary cusp contains exactly one end of a geodesic.  We choose a sense of spiraling using the orientation of $S$ according to our uniform convention.  Clearly any orientation preserving automorphism of $S$ preserves $L$.  \end{proof}

\begin{proof}[Proof of Theorem \ref{InvariantThm}]  We are given $f:S\to S$.  In case $S$ is closed the Nielsen-Thurston theory gives either an invariant system of reducing curves (eigenvalue $\lambda=1$), or it gives  a pair of invariant laminations disjoint from $\bdry S$, with eigenvalues $\lambda$ and $1/\lambda$, $\lambda\ne 1$.  Here it is necessary to exclude periodic automorphisms.

If $S$ is not closed, by Lemma \ref{InvariantLemma}, there is an invariant measured lamination $L_0$, provided the underlying surface for $S$ is not a disk.  Even when the underlying surface of $S$ is a disk, it is sometimes possible to find an invariant lamination consisting of isolated geodesics.   The completion of the complement $L_0$ is a new surface with cusps $S_1$, typically not connected, and with no closed components.  Let us say the components are $F_i$.  By the additivity of $\Chi_g$, the sum $\sum_i \Chi_g(F_i)=\Chi_g(S_1)$.  Further, no $\Chi_g(F_i)\ge0$, otherwise $L_0$ would have $\bdry$-parallel leaves, trivial closed curves, or isotopic leaves, which we do not allow in our essential laminations.  Thus $f$ induces an automorphism $g$ of $\cup_i F_i$.  The components $F_i$ are permuted by $g$, so we can divide them into orbits.  Choosing a representative $F_i$ from one orbit, $g^m(F_i)=F_i$ for some $m$.  By the lemma, $g^m$ has an invariant lamination $\hat L$ whose iterates form an invariant measured lamination $L_1$ in $S_1$, provided $F_i$ does not have a contractible underlying surface.  Now we consider the induced automorphism on the completion $S_2$ of $S_1\setminus L_1$, and we obtain another invariant lamination $L_2$.  Continue by induction to obtain invariant measured laminations $L_i$ in $S_i$.  The process stops by Lemma \ref{DepthBoundLemma} when all of the complementary surfaces of $\cup_i L_i$ in $S$ are contractible. 
\end{proof}

\section{Examples}\label{Examples}

At this writing, train tracks are not nearly as useful for analyzing finite height measured laminations in a surface as they are for analyzing measured laminations in a surface.
A measured lamination carried by a train track $\tau$ induces weights on the segments of $\tau$. One uses a weight vector which assigns a non-negative real weight to each segment of the train track such that at each switch of the train track the appropriate {\it switch equation} is satisfied, see Figure \ref{DepthSpiral}.  Weight vectors satisfying the switch equations are called {\it invariant weight vectors}.  An invariant weight vector for a train track $\tau$ determines a measured lamination carried by $\tau$ up to some minor modifications of the lamination.

Similarly a finite height  lamination carried by a train track $\tau$ induces weights in $\SB$ on the segments of the train track, again satisfying the switch equations.  In general, however, these weights do not come close to determining the finite height measured lamination.  Nevertheless, it is useful and instructive to study {\it invariant} $\SB$ weight vectors on train tracks, i.e. weight vectors with entries in $\SB$ which satisfy switch equations.  In simple examples, they correspond exactly to finite height laminations.

\begin{example}\label{SpiralExample}  The simplest (but atypical) examples of a finite height laminations are ones with height 1 leaves consisting of closed curves and height 0 leaves
spiraling to limit on closed curves.  We show an example of such a lamination $L$ in Figure \ref{DepthSpiral}.  We assign a height 1 atomic measure of $(1,1)\in \SB$ to the closed curve, and we assign
an atomic measure of $(0,1)$ to the spiraling leaves.  These measures induce weights on segments of the train track $\tau$ carrying $L$ as shown, and we see that the switch equations
hold with our addition operation for $\SB$.  Ordering the segments, we obtain an invariant weight vector $[x,y,z]=[(0,1),(1,1),(0,1)]\in \SB^3$.  Notice that $w=x+y=(0,1)+(1,1)=(1,1)$ is determined by the other weights.

\begin{figure}[H]
\centering
\scalebox{1}{\includegraphics{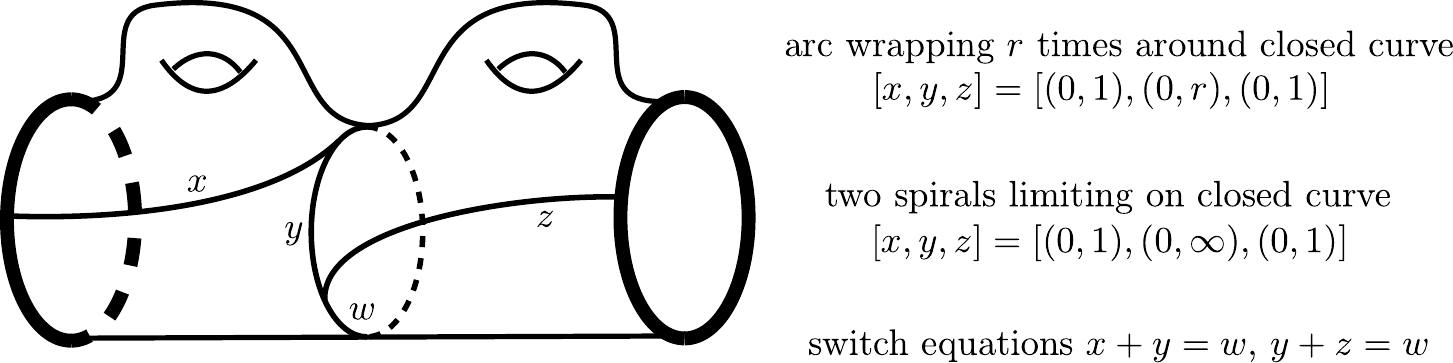}}
\caption{\small Weights in $\SB$ inducing height 0 and height 1 measured laminations.}
\label{DepthSpiral}
\end{figure}

A more interesting feature of this example comes from the fact that the train track also fully carries measured laminations.   Thus for example if all weights have level 0, we have an invariant weight vector $[(0,1),(0,r),(0,1)]$, which represents a curve wrapping $r$ times around the embedded closed curves in $\tau$. Normalizing to make the largest weight $(0,1)$, we obtain a representative $[(0,1/r),(0,1),(0,1/r)]$, which approaches $[0,(0,1),0]$ in $\SB^3$.
If instead we normalize so the smallest weight is $(0,1)$, we are left with \\$[(0,1),(0,r),(0,1)]$, which approaches $[(0,1),(0,\infty),(0,1)]$.  These limit weight vectors clearly represent different finite height laminations.  The weight vector $[0,(0,1),0]$ represents a finite height lamination consisting of a closed curve with weight $(0,1)$.  The weight vector  $[(0,1),(0,\infty),(0,1)]$ represents a finite height lamination consisting of a closed curve with weight $(0,\infty)$ and two spirals approaching this closed curve, each with weight $(0,1)$.   Assuming that the natural topology on projectivized weight vectors correctly represents the topology on $\PF(S)$, we see that on $\PF(S)$ is non-Hausdorff.
\end{example}

We should emphasize again that the above example is not typical, since the weights on the train track determine a finite height lamination.   In general, we will need infinitely many parameters in $\SB$ to determine a finite height
lamination carried by $\tau$.

To get some idea of the nature of the spaces of finite height laminations, we will analyze small portions of the spaces.  Namely, we will let $\F(\tau)$ denote the subspace of $\F(S)$ consisting of all finite height laminations carried by $\tau$.  Similarly for all the other spaces, so that $\PF(\tau)$, $\FC(\tau)$, $\PFC(\tau)$, $\FM(\tau)$, etc., are subspaces of laminations in $\PF(S)$, $\FC(S)$, $\PFC(S)$, $\FM(S)$, etc., respectively.

To analyze finite height measured laminations carried by a train track $\tau$ with $n$ segments it is reasonable to consider the cone of all weight vectors in $\SB^n$ satisfying switch equations.   

\begin{defn} If $\tau \embed (\bar S, \alpha)$ is an essential train track with $n$ segments, then there is a {\it cone $\V(\tau)$ of invariant proximal $\SB$-weight vectors for $\tau$} in $\SB^n$ consisting of weight vectors satisfying switch equations and the further condition of proximality, which we now define.  There is an equivalence relation by {\it level alignments} on the set of invariant weight vectors.  Namely, given a weight vector $[x_1,\ldots, x_n]$, $x_j=(i_j,t_j)$ let $E=\{\level(x_1), \level(x_2),\ldots,\level(x_n)\}\subset \nat$.  If $E$ has the property that $k\notin E$ and $k+1\in E$, then let $y_j=x_j$ if $\level(x_i)=i_j<k$ and let $y_j=(i_{j-1},t_j)$ if $i_j>k$.  (Reduce the level by one beyond the gap.)  If no level alignments are possible, the weight vector is {\it proximal}.

We extend the equivalence relation by {\it projectivizing}.
Two weight vectors $v$ and $w$ in $\V(\tau)$ are {\it projectively equivalent} if $\real(w_j)=\lambda\real (v_j)$  for some $\lambda\in \reals$, $0<\lambda<\infty$, and $\level(v_j)=\level(w_j)$.  $\PV(\tau)$ denotes the resulting projectivization quotient of $\V(\tau)$.  

There is a subspace $\VC(\tau)$ of $\V(\tau)$, the {\it subspace of contiguous weight vectors}.  A {\it contiguous weight vector} is a weight vector for which no alignment or adjustment modification is possible, where ``adjustment" is defined as follows.  Suppose $\hat \tau$ is a sub train track of $\tau$, $\hat \tau$ probably not properly embedded in $(\bar S, \alpha)$.  If $x$ is a switch-respecting weight vector on $\tau$, and we obtain another switch respecting weight vector $x'$ from $x$ by increasing by 1 the levels of all entries representing weights on segments of $\hat \tau$, then $x'$ is obtained from $x$ by an {\it adjustment}.  Thus if $x_i$ is a weight on a segment of $\hat \tau$, $\level(x_i')=\level(x_i)+1$, $\real(x_i')=\real(x_i)$; if $x_i$ is a weight on a segment not in $\hat \tau$, then $x_i'=x_i$.  The space $\PVC(\tau)$ is the space obtained from $\VC(\tau)$ by projectivization.

We can modify the above weight spaces to allow only finite weights at each level.  This gives $\VM(\tau)$, $\PVM(\tau)$, etc., where the ${\mathscr M}$ indicates that we are dealing with finite height measured laminations.
\end{defn}

In words, we can roughly explain level adjustment as follows:  If there is an unnecessary gap in the levels of weights, we eliminate the gap by shifting some of the levels.  
In Example \ref{SpiralExample} obvious modifications are $[(0,1),(2,1),(1,1)]\leadsto[(1,1),(2,1),(1,1)]\leadsto[(0,1),(1,1),(0,1)]$, where the second modification is an alignment.  From this point of view there is no real difference between the finite height laminations represented by these weight vectors.  In the first weight vector the right spiral is deemed to have height 1 and the closed curve is deemed to have height 2, but they could just as well have heights 0 and 1 respectively.   The end result is a contiguous weight vector.

 However, no alignment modification is possible in the weight vector  $[(0,1),(2,1),(1,1)]$.  It is already proximal, so represents a point in $\F(\tau)$.  

In some simple examples we will consider, $\V(\tau)$ and $\F(\tau)$ turn out to be the same, so we will use the names almost interchangeably, but in each example we must verify that the topologies of the two spaces agree.  

Projectivizing in $\SB^n$ is not as obvious as projectivizing $\reals^n$, so we will first describe the projectivization of $\SB^n$ without reference to laminations or train tracks.  When we calculate examples of $\V(\tau)$ or $\PV(\tau)$, many of the features of the projectivized $\SB^n$ appear in our spaces.

\begin{example}[Projectivizing $\SB^2$]\label{ProjectivizeLSquaredEx} 

\begin{figure}[H]
\centering
\scalebox{1}{\includegraphics{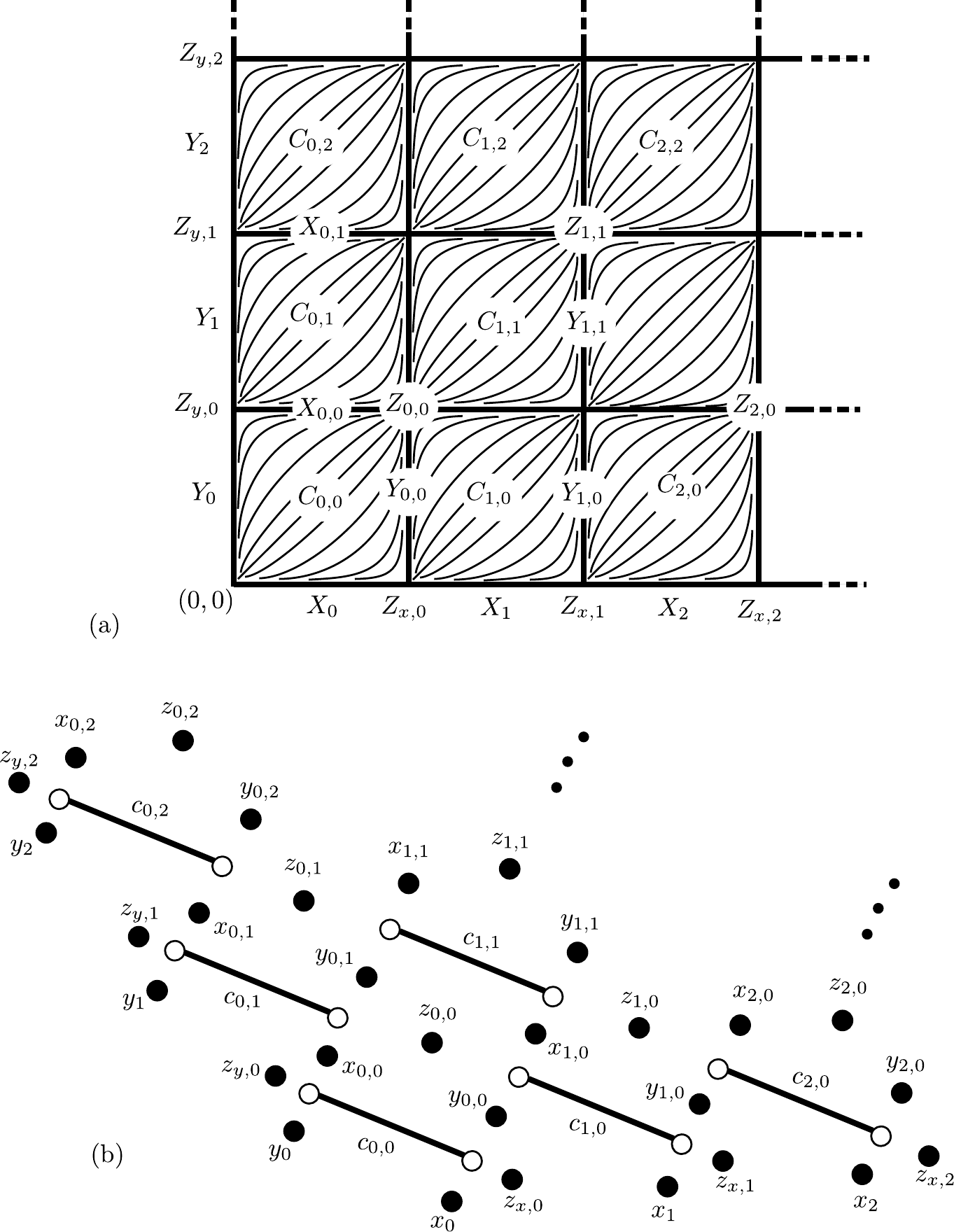}}
\caption{\small Projectivizing $\SB^2$ .}
\label{DepthSSquared}
\end{figure}

We want to find the quotient space $\Proj \SB^2$ of $\SB^2\setminus \{0\}$ obtained from the equivalence relation $v\sim\lambda v$ for $v\in \SB^2$, $\lambda\in (0,\infty)$.    We can imagine $\SB^2$ as a plane tiled by squares, where each square represents a {\it level cone} $C_{i,j}$ having the form $C_{i,j}=\{[x,y]\in \SB^2:\level(x)=i, \level (y)=j\}$.  To each level cone we associate {\it level axes} $X_{i,j}=\{[(i,t),(j,\infty)]\}$ and $Y_{i,j}=\{[(i,\infty),(j,u)]\}$ as shown in Figure \ref{DepthSSquared}(a).  Note that the axes are ``at infinity" in each $C_{i,j}$, rather than ``at 0."  Thus, for example $Y_{0,0}$ is the right edge of $C_{0,0}$ and $X_{0,0}$ is the top edge of $C_{0,0}$ in the diagram.  To each level cone we also associate a lattice point $Z_{i,j}=[(i,\infty),(j,\infty)]$ as shown.    Each level cone should really be imagined as a copy of $(0,\infty]^2$ with projective equivalence classes being rays in this first quadrant of $\bar\reals^2$.  However, we are forced to draw most of the rays as curves, as shown in the figure.  We have further level axes $X_i=\{((i,t),0)\}$ and $Y_i=\{(0,(i,t)\}$, $0<t\le \infty$.   We also have further lattice points $Z_{x,i}=((i,\infty),0)$ and $Z_{y,i}=(0,(i,\infty))$ as shown.  

The level axes are also equivalence classes, but there are some strange phenomena here.  One sees from the figure that one can find a sequence of rays in a level cone approaching two different axes.  For example, the rays in the level cone $C_{1,1}$ containing points $v_n=[x_n,y_n]=[(1,n),(1,1)]$ limit on the union of axes $X_{1,0}$ and $Y_{1,1}$.  This implies that the quotient space is non-Hausdorff, since the sequence $[v_n]$ approaches both the equivalence class $x_{1,0}$ of the open ray in  $X_{1,0}$  and the equivalence class $y_{1,1}$ containing the open ray  $Y_{1,1}$.  The non-Hausdorff behavior in $\Proj\SB^2$ is even worse at the lattice points $z_{i,j}=[Z_{i,j}]$ of the of the projectivization.  Any neighborhood of such a point must contain all the equivalence classes of rays emanating from $Z_{i,j}$.  We have a schematic picture of $\Proj\SB^2$ in Figure \ref{DepthSSquared}(b).  Corresponding to non-axis rays in a level cone like $C_{1,1}$, there is an open segment $c_{1,1}$ in $\Proj\SB^2$.  Each end of the segment approaches two different classes represented by an axis.  In the case of $c_{1,1}$, at the left end of the segment we have points $y_{0,1}$ and  $x_{1,1}$, which are the equivalence classes of the open rays in axes $Y_{0,1}$ and $X_{1,1}$ respectively, which cannot be separated by disjoint neighborhoods.   At the right end we have $x_{1,0}$ and $y_{1,1}$ corresponding to  $[X_{1,0}]$ and $[Y_{1,1}]$ respectively, which also cannot be separated.  Finally, any neighborhood of the point $z_{0,0}$ for example, must contain all points in $c_{0,0}$, $c_{1,1}$ .  The neighborhood must also contain some points of $c_{0,1}$ and $c_{1,0}$ as well as  $y_{0,0}$, $x_{0,0}$, $x_{1,0}$ and $y_{0,1}$.   Similarly, for example, a neighborhood of $z_{y,1}$ must contain some points of $c_{0,1}$, all points of $c_{0,2}$, and $x_{0,1}$.
\end{example}

Now we discuss the simplest possible non-trivial space $\F(\tau)$.

\begin{example}\label{TwoCurves}  Let $S$ be a surface, say a closed surface.  Let $\tau$ be the disjoint union of two disjoint embedded non-isotopic curves $A$ and $B$.  Assigning $\SB$-weights $x$ and $y$ to these curves, we obtain all possible finite height laminations carried by $\tau$.  We are in the situation of the previous example, since there are no constraints on the weights $x$ and $y$, except that for $\V(\tau)$ we restrict to proximal weight vectors and for $\VC(\tau)$ we restrict to contiguous weight vectors.  Since there are two weights, proximal weight vectors can involve weights in at most two levels.   Figure \ref{DepthTwoCurves}(a) shows $\V(\tau)$; (b) shows $\PV(\tau)$; (c) shows $\VC(\tau)$; and (d) shows $\PVC(\tau)$.   Each of these projectivized spaces is a subspace of $\Proj\SB^2$.  Observe that $\PVC(\tau)=\PWM(\tau)$, where $\PWM(\tau)$ denotes the subspace in $\PWM(S)$ of classes of weakly measured laminations in $S$ carried by $\tau$.

 \begin{figure}[H]
\centering
\scalebox{1}{\includegraphics{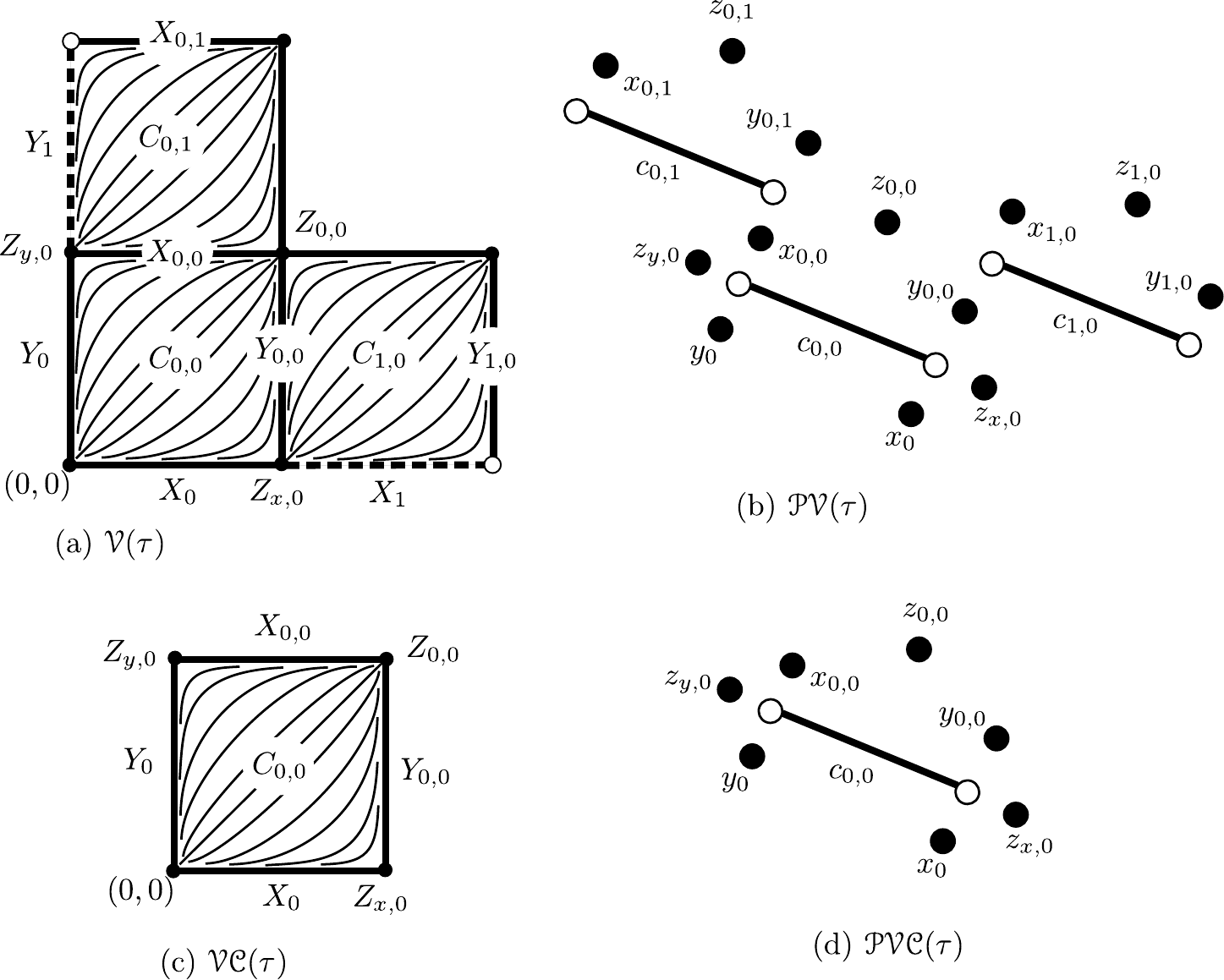}}
\caption{\small Example of $\tau$ a disjoint union of two closed curves.}
\label{DepthTwoCurves}
\end{figure}

To show that $\PV(\tau)$ is homeomorphic to $\PF(\tau)$, we must produce a homeomorphism which takes the projective class of a lamination represented by certain weights on $\tau$ to the same lamination viewed as a point in $\PF(\tau)\subset \SB^\G$.  In this example, this is easy to do, because $\tau$ carries only one geodesic lamination consisting of the two closed curves, which has different finite height structures.  So $\V(\tau)$ can be regarded as $\SB$-linear combinations of $A$ and $B$.  For any $\theta\in \G$, we make the underlying curve $\gamma$ geodesic to see that $i_\theta(xA+yB)=xi_\theta(A)+yi_\theta(B)$, $x,y\in\SB$, where $i_\theta(A)$, for example, represents the possibly infinite counting intersection of $\theta$ with $A$.  This is clearly an $\SB$-linear and continuous.  So $i_\theta$ is continuous on $\V(\tau)$, so the map from $\V(\tau)$ to $\F(\tau)$ is a continuous bijection.  The inverse is also continuous, since $x$ and $y$ can be expressed as a linear combination of finitely many entries of a point in $\F(\tau)\subset\SB^\G$.   Projectivizing both spaces, we also get a homeomorphism $\PV(\tau)\to \PF(\tau)$.
\end{example}

\begin{example} \label{ProjectivizationExample} 
For a slightly more interesting example, we will consider the surface pair $(\bar S, \alpha)$ shown in Figure \ref{DepthCurveSpiral}(a).  We will find $\PF(\tau)$ and other spaces for the train track $\tau$ shown.  There are just two weights, so $\V(\tau)$ is at most ``2-dimensional." 
 
 \begin{figure}[H]
\centering
\scalebox{0.9}{\includegraphics{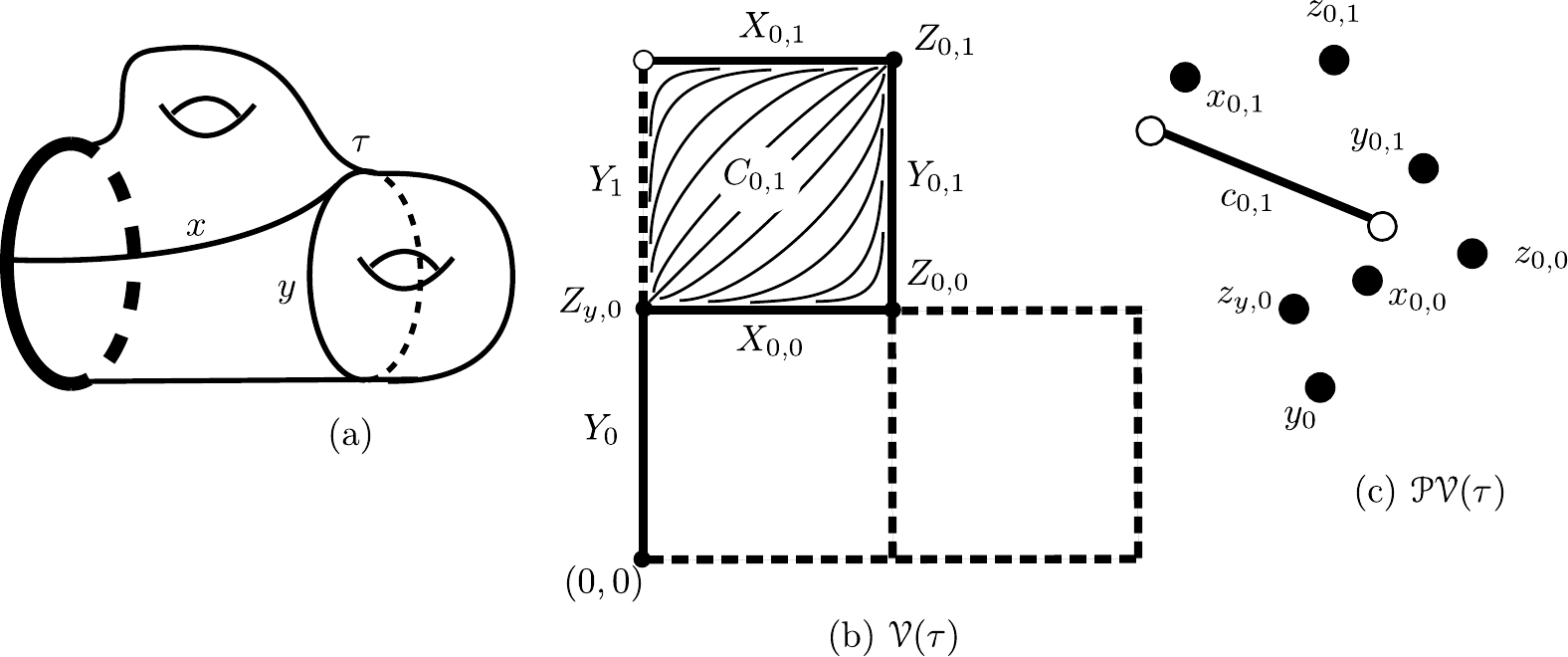}}
\caption{\small Example of $\PV(\tau)$ or $\PF(\tau)$.}
\label{DepthCurveSpiral}
\end{figure}

The space $\V(\tau)$ is again a subspace of $\SB^2$.  We use coordinates $x$ and $y$ in  $\SB^2$ corresponding to weights on the segments of $\tau$.  Again, the train track $\tau$ carries a unique geodesic lamination consisting of a spiral leaf and a closed leaf, and the weights $x$ and $y$ can be thought of as weights on these two leaves.

The only switch equation $x+y=y$, together with the proximality condition, yields a subspace of $\SB^2$.  In order for the equation to hold, $\level(x)<\level(y)$ or $\level(x)=\level(y)$ and $\real(y)=\infty$.  Allowing some 0 weights, we also have $x=0$ and $y$ arbitrary.  If we consider only proximal weight vectors, we obtain the portion of $\SB^2$ shown in Figure \ref{DepthCurveSpiral}(b), which is $\V(\tau)$.
Projectivizing, we obtain $\PV(\tau)$ as show in  Figure \ref{DepthCurveSpiral}(c).  We have the usual non-Hausdorff behavior.   For example $x_{0,0}$ and $y_{0,0}$ cannot be separated by open neighborhoods.  Any neighborhood of $z_{y,0}$ must include all points of $c_{0,1}$, and it must also contain $x_{0,0}$ and $y_0$.

To show that $\PV(\tau)$ is homeomorphic to $\PF(\tau)$ we use essentially the same argument as in the previous example, where now $A$ represents the spiral leaf and $B$ represents the closed leaf of the only lamination fully carried by $\tau$.  

 Here are some interpretations of the points as finite height laminations:  Points in $C_{0,1}$ correspond to $x=(0,t)$ and $y=(1,u)$, $0<u,t<\infty$ are laminations consisting of the closed curve with weight $(1,u)$ and a spiral leaf with weight $(0,t)$.  Points on $X_{0,0}$ correspond to the same lamination with weight $(0,t)$ on the spiral leaf, and weight $(0,\infty)$ on the closed leaf.  The point $Z_{0,0}$ is the same lamination with weight $(0,\infty)$ on both leaves.  The point $Z_{0,1}$ is the same lamination with weight $(0,\infty)$ on the spiral leaf and weight $(1,\infty)$ on the closed leaf; this point does not represent a contiguous lamination, since the level of the spiral can be moved up to $1$.  The point $Z_{0,1}$ and the points on $X_{0,1}$ also do not represent contiguous laminations.   Thus $\PVC(\tau)=\PV(\tau)\setminus\{x_{0,1},z_{0,1}\}$.
\end{example}

We give an example to show that switch-respecting $\SB$ weights on a train track $\tau$ do not determine a finite height measured lamination uniquely.

\begin{example}  Figure \ref{DepthAmbiguous} shows train tracks $\tau_1$ and $\tau_2$ with $\SB$ invariant weights (induced by finite height laminations) and pinching maps $\tau_1\to \tau$ and $\tau_2\to \tau$ inducing the same weights on $\tau$.  Observe that the weights on $\tau$ are not proximal.  To make them proximal, the level of all weights must be reduced by 1.
\end{example}

\begin{figure}[H]
\centering
\scalebox{1}{\includegraphics{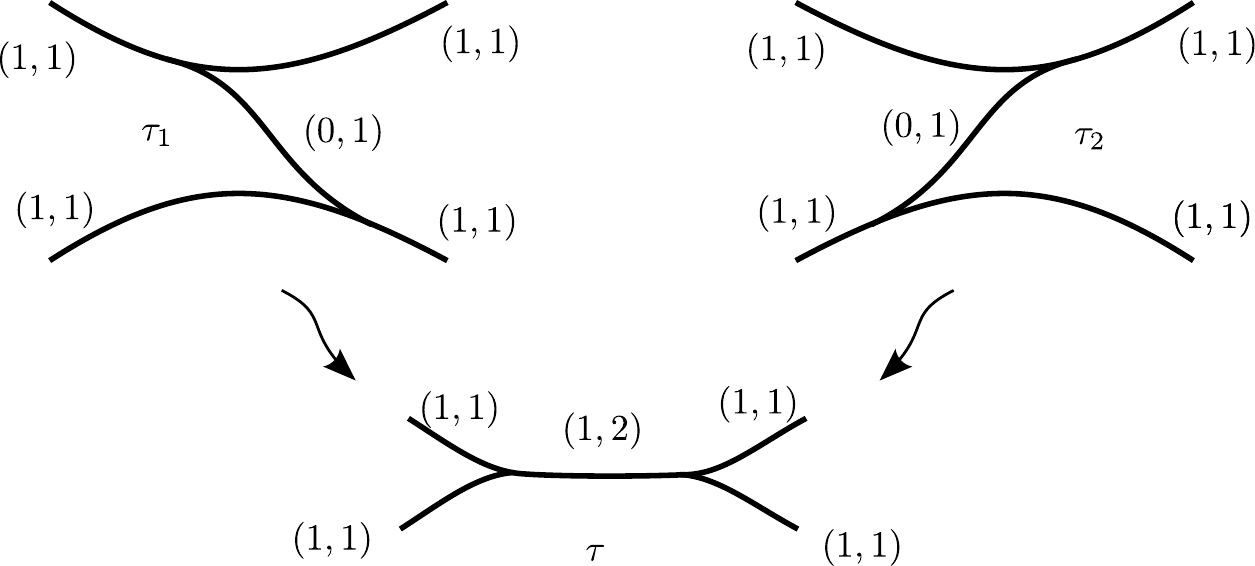}}
\caption{\small Weight vectors on a train tracks do not determine a finite height lamination.}
\label{DepthAmbiguous}
\end{figure}

Based on the example, one might hope that sufficient splitting of a train track would eliminate the ambiguity, so that $\SB$- weights would determine the lamination carried -- but that is hopeless.  However, it seems possible to describe $\F(\tau)$ in terms of inverse limits and infinitely many splittings of the given train track $\tau$.  

\section{Group actions on finite height trees}\label{TreesSection}

Corresponding to an essential lamination $L$ in a closed connected surface $S$, there is an action of $\pi_1(S)$ on an order tree dual to the lift $\tilde L$ of $L$ to the universal cover $\tilde S$ of $S$.  If $L$ is measured, one obtains an action on an $\reals$-tree.  Conversely, from an action of $\pi_1(S)$ on a $\reals$-tree one can construct (non-uniquely) an essential measured lamination.   It is natural to try to describe actions on trees which correspond to finite height measured laminations.  

We begin be generalizing the standard theory of actions of $\pi_1(S)$  on $\reals$ trees only very slightly.  We will describe actions on trees corresponding to measured laminations $(L,\mu)$,  of the kind described in  \cite{UO:Pair}, in a surface with cusps $S$.  Recall that these are measured laminations which are allowed to have leaves spiraling towards closed leaves of $\bdry S$.   Leaves may also approach non-compact components of $\bdry S$ asymptotically, and $\bar L=\bdry S\cup L$ forms a lamination.  Looking at the universal cover $\tilde S$ of $S$ and the lift $\tilde L$ to $\tilde S$, a leaf of $\tilde L$ may approach a component of $\bdry \tilde S$ at one end of that component.  We do not allow ``$\bdry$-parallel" leaves in $L$, which are isotopic to leaves of $\bdry S$, but $\bar L$ contains every boundary component as a leaf.  A closed curve $\beta$ in $\bdry S$ approached by spiral leaf of $ L$ has the property that any transversal of $\bar L$ with one endpoint in $\beta$ has infinite measure.   We
ensured that $\bar L$ has this property at every component of $\bdry S$ by assigning infinite atomic measure to all components of $\bdry S$ not approached by a spiral leaf of $L$.  In fact, when dealing with dual trees, we will replace each component of $\bdry S$ (not approached by a spiral leaf) by a product family of leaves parametrized by $[0,\infty]$.  Henceforth, we will assume that $(\bar L,\bar\nu)$ will have no leaves with atomic measures and will have the property that transversals with one end in $\bdry S$ have infinite measure.
Since we have some transversals with infinite measure, when attempting to construct a dual tree, we are forced to use $\bar \reals$-trees rather than $\reals$-trees:

\begin{defns}  If $\bar \reals$ denotes the extended real line, $\bar\reals=[-\infty,\infty]$, an {\it $\bar\reals$ metric space} is a pair $(X,d)$ where $d:X\times X\to \bar\reals$ satisfies the usual metric space axioms.  In an $\bar\reals$ metric space, a {\it segment} is a subset isometric to some $[a,b]\subset \bar\reals$.  An {\it $\bar\reals$-tree} is an $\bar\reals$ metric space $\T$ satisfying the following axioms:

(1)  For any two points $v,w\in \T$ there is a unique segment $[v,w]$  with endpoints $v,w$.

(2) The intersection of two segments $[x,y]$ and $[x,z]$ in $\T$ with a common endpoint $x$ is a segment $[x,w]$.

(3) $[x,y]\cup [y,z]=[x,z]$ if $[x,y]\cap [y,z]=\{y\}$.

The {\it directions} of $\T$ at a point $x$ are the components of $\T-\{x\}$.  A point $x\in \T$ is called a {\it boundary point} if it has only one direction.

An {\it infinite point} in $\T$ is a point $y$ such that there is $x\ne y$ such that $[x,y]$ is isometric to $[0,\infty]$.
\end{defns}


A $\bar\reals$-metric described above is sometimes called an {\it extended metric}. 

It is easy to check that an infinite point is a boundary point.  For if $y$ were an infinite point and not a boundary point then $\T\setminus\{y\}$ would have at least two components, and there would exist $x\in \T$ such that $[x,y]$ is isometric to $[0,\infty]$.  Let $w\in \T$ be in a component of $\T\setminus\{y\}$ not containing $x$.  Then $[w,y]\cap[x,y]=\{y\}$, so $[w,x]=[w,y]\cup [y,x]$ is a segment which is clearly not isometric to an interval in $\bar \reals$.

One can check that if $y$ is an infinite point, then for any $w\ne y$, $[w,y]$ is isometric to $[0,\infty]$ or $[-\infty,\infty]$.  This follows since by definition there exists $x$ so that $[x,y]$ is isometric to $[0,\infty]$.  By the axioms, $[x,y]\cap [w,y]$ is a segment $[r,y]$ which is isometric to $[0,\infty]$ unless $r=y$, but if $r=y$, then $y$ is not a boundary point.  We conclude $[w,y]$ is isometric to $[0,\infty]$ or $[-\infty,\infty]$. 

We have proved part of the following proposition:

\begin{proposition}\label{MeasuredTreeProp} Suppose $(L,\nu)$ is an essential $\reals$- measured lamination in $S$ and $(\bar L,\bar\nu)$ is constructed as above to have the property that any transversal with one end on $\bdry S$ has infinite measure.  If $\T$ is the $\bar\reals$-tree dual to the lift $\tilde{\bar L}$, then there is an action of $\pi_1(S)$ on $\T$.  The following are equivalent:

\begin{tightenum}
\item $q\in \T$ is a boundary point,
\item $q\in \T$ is an infinite point,
\item $q\in \T$ represents a component of $\bdry \tilde S$.
\end{tightenum}
\end{proposition}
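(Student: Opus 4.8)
The plan is to establish the three implications (ii) $\Rightarrow$ (i), (iii) $\Rightarrow$ (ii), and (i) $\Rightarrow$ (iii); the first of these has already been verified in the discussion preceding the proposition (an infinite point is a boundary point, by the argument splicing together two segments into one that cannot be isometric to a subinterval of $\bar\reals$), and the existence of the action of $\pi_1(S)$ on $\T$ is standard from the dual-tree construction applied $\pi_1(S)$-equivariantly to the lift $\tilde{\bar L}$. So the real content is (iii) $\Rightarrow$ (ii) and (i) $\Rightarrow$ (iii).

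For (iii) $\Rightarrow$ (ii): suppose $q\in\T$ represents a component $c$ of $\bdry\tilde S$. Recall that in the construction of $(\bar L,\bar\nu)$ we replaced each boundary component not approached by a spiral leaf with a product family of leaves parametrized by $[0,\infty]$, and every boundary component either carries spiral leaves of $L$ limiting on it or has this attached product family; in either case, any transversal of $\bar L$ with one endpoint on $\bdry S$ has infinite measure. Lifting to $\tilde S$, the component $c$ of $\bdry\tilde S$ is therefore approached (on one side) by an infinite $\bar\nu$-measure of leaves of $\tilde{\bar L}$. I would pick a transversal $T$ in $\tilde S$ with one endpoint $p$ on $c$ and the other endpoint $x$ in the interior, transverse to $\tilde{\bar L}$, with $\bar\nu(T)=\infty$; then the dual-tree image of $T$ is a segment $[x,q']$ in $\T$ of length $\bar\nu(T)=\infty$, where $q'$ is the point of $\T$ corresponding to the leaf/complementary region containing $p$, which is exactly $q$ (the point representing $c$). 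Hence $[x,q]$ is isometric to $[0,\infty]$, so $q$ is an infinite point. Some care is needed to check that the endpoint $p$ on $c$ really does map to $q$ and that the measure of $T$ is genuinely $+\infty$ rather than a large finite value — this is where the standing assumption that transversals ending on $\bdry S$ have infinite measure does the work.

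For (i) $\Rightarrow$ (iii): suppose $q\in\T$ is a boundary point, i.e. $\T\setminus\{q\}$ has a single direction. The points of $\T$ correspond to leaves of $\tilde{\bar L}$ together with complementary components of $\tilde{\bar L}$ in $\tilde S$ (the latter collapsed to points). If $q$ corresponded to an interior leaf of $\tilde L$ (not a boundary leaf), then that leaf has the lamination approaching it, or transverse arcs crossing it, on both sides — since $L$ is essential and the leaf is not $\bdry$-parallel — so one can produce two transversals emanating from the leaf into genuinely distinct complementary regions, giving at least two directions at $q$, a contradiction. Likewise if $q$ corresponded to a complementary component of positive "size," two disjoint sub-arcs of its frontier would yield two directions. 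The only way $q$ can have a single direction is that it is represented by a leaf of $\tilde{\bar L}$ which lies on $\bdry\tilde S$ — a lift of a boundary component — because such a leaf has the rest of $\tilde S$ lying entirely on one side. (Here one uses that $L$ has no $\bdry$-parallel leaves, so the only "one-sided" leaves of $\tilde{\bar L}$ are the boundary leaves themselves.) This gives (iii).

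The main obstacle I expect is the careful bookkeeping in (i) $\Rightarrow$ (iii): one must rule out every way that an interior leaf or a complementary region could accidentally be a boundary point of $\T$, and this requires invoking essentiality of $L$, the absence of $\bdry$-parallel leaves, and the precise description of $\bar L$ (including the product families glued in at non-spiral boundary curves) to guarantee that interior leaves always have the lamination or its complement present on both sides. The measure-theoretic point in (iii) $\Rightarrow$ (ii) — that the relevant transversal has \emph{infinite} rather than merely unbounded finite measure — is by contrast a direct consequence of the normalization built into the definition of $(\bar L,\bar\nu)$, so I expect it to be routine once stated.
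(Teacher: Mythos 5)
Your proposal is correct and follows essentially the same route as the paper: the same dual-tree construction, the appeal to the preceding discussion for ``infinite point $\Rightarrow$ boundary point,'' the infinite measure of transversals ending on $\bdry S$ to make boundary-representing points infinite points, and the ``at least two emanating directions'' argument showing non-boundary-representing points are not boundary points. Your organization into the cycle (i)$\Rightarrow$(iii)$\Rightarrow$(ii)$\Rightarrow$(i) is only a cosmetic repackaging of the paper's argument (which also notes finiteness of interior transversal measures, a step your cycle renders redundant).
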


\begin{remark} The proposition illustrates the fact that the $\bar\reals$-tree approximates $\breve S$ with a hyperbolic  structure, rather than $S$.
\end{remark}

\begin{proof}  We construct the dual tree in the usual way.  It's points are the components of of $\tilde S\setminus \tilde{\bar L}$ in the universal cover $\tilde S$ and leaves not in the closures of these complementary components. The $\bar \reals $ distance between two points is given by the length of an efficient transversal joining the corresponding leaves or complementary components.  We constructed $(\bar L,\bar\nu)$ to ensure that the distance in $\T$ from a leaf in $\bdry \tilde S$ to some distinct point is always infinite; this corresponds to the property that a non-trivial transversal in $S$ with one end in $\bdry S$ always has infinite measure.  Further,  assuming that $L$ is $\reals$-measured means that transversals in the interior of $S$ have finite measure, so using the discussion preceding the statement of the proposition we conclude that the a point in $\T$ is a boundary point if and only if it is an infinite point if and only if it represents a leaf of $\bdry S$.   It is also clear that if $q\in T$ does not represent a component of $\bdry \tilde S$, then there are at least two emanating directions, corresponding to at least two distinct emanating transversals (not isotopic through transversals with ends in the same leaf or complementary component). In the usual way, we obtain an action of $\pi_1(S)$ on $\T$.  
\end{proof}

\begin{defn} A {\it locally finite $\bar\reals$-tree} $\T$ is a tree having the property that a point $q\in \T$ is a boundary point if and only if it is an infinite point.
\end{defn}

Thus if $L$ is a measured lamination (which has a locally finite transverse measure), the $\bar\reals$- tree $\T$ dual to the lift of $\bar L$ is locally finite.  

Now suppose $L$ is a weakly measured essential lamination in $S$.   There is a sublamination $L_\infty$ which supports the infinite part of the transverse measure.   As before, we must avoid atomic measures on leaves of $L_\infty$, and there is potentially a technical problem here.  We cannot simply replace each leaf by a product family of leaves with infinite transverse measure since this might ``take too much space in the surface."  However, from Proposition \ref{PlanteProp}, we know that $L_\infty$ admits some structure as a finite height measured lamination, $\displaystyle L_\infty=\bigcup_{j=0}^hX_j$, with transverse measure $\nu_j$ on $X_j$.   We construct an infinite transverse measure on $\L_\infty$ without atomic measures as follows.  If some $X_j$ contains isolated leaves (in $\hat S_j$) with atomic measures, we replace these by a product family, with non-atomic transverse measure.  There can only be finitely many such leaves.  Then we define the infinite transverse measure on $L$ as $\sum \infty\nu_j$, multiplying each measure $\nu_j$ by $\infty$.  Finally, to construct $\bar L$ we add parallel families of leaves isotopic to components of $\bdry S$, as before, to ensure that $\bar L$ has the property that any transversal with one end in $\bdry S$ has infinite measure.  The resulting $(\bar L,\bar\nu)$ is again weakly measured, but now the measure need not be locally finite in the interior of $S$.  We say the lamination $(\bar L,\bar\nu)$ is {\it geometric weakly measured.}

\begin{proposition}  Suppose $(L,\nu)$ is a weakly measured geodesic lamination in $S$, and $(\bar L,\bar \nu)$ is the modified geometric lamination.   If $\T$ is the $\bar\reals$-tree dual to the lift $\tilde{\bar L}$, then there is an action of $\pi_1(S)$ on $\T$.  Further, 
\begin{tightenum}
\item $q\in \T$ is a boundary point if and only if $q$ represents a component of $\bdry S$, and
\item  if $q\in \T$ represents a component of $\bdry S$, then $q$ is an infinite point.
\end{tightenum}
\end{proposition}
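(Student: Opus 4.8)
The plan is to follow the proof of Proposition~\ref{MeasuredTreeProp} almost verbatim, keeping track of the one new phenomenon: since $(\bar L,\bar\nu)$ is the \emph{geometric weakly measured} modification, transversals can have infinite measure not only near $\bdry S$ but also wherever they cross $\tilde L_\infty$, because the measure on $L_\infty$ was taken to be $\sum_j\infty\nu_j$. First I would build $\T$ in the standard way: its points are the components of $\tilde S\setminus\tilde{\bar L}$ together with those leaves of $\tilde{\bar L}$ that lie in the closure of no such component, and the $\bar\reals$-distance between two points is the infimal $\bar\nu$-length of an efficient transversal joining the corresponding leaves or complementary regions. The three $\bar\reals$-tree axioms are checked as usual; the only thing to note is that the set of points at finite distance from a chosen basepoint is an ordinary $\reals$-tree, with the infinite points attached at its ends, so the appearance of the value $\infty$ causes no trouble. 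The deck action of $\pi_1(S)$ on $\tilde S$ carries $\tilde{\bar L}$ to itself and preserves $\bar\nu$, hence permutes the points of $\T$ by isometries, which gives the asserted action of $\pi_1(S)$ on $\T$.

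For (2), recall that $(\bar L,\bar\nu)$ was constructed (by inserting parallel product families along $\bdry S$) precisely so that \emph{every} transversal with one endpoint on $\bdry S$ has infinite measure, and in particular so that each component of $\bdry\tilde S$ is a leaf of $\tilde{\bar L}$ represented by a genuine point of $\T$. Lifting, if $q\in\T$ represents a component $\tilde\delta$ of $\bdry\tilde S$ and $x\ne q$ is any other point, the segment $[x,q]$ has infinite length; replacing $x$ by the point of $[x,q]$ at distance $1$ from $q$ gives a point $x'$ with $[x',q]$ isometric to $[0,\infty]$, so $q$ is an infinite point. The ``$\Leftarrow$'' half of (1) is then immediate, since every infinite point is a boundary point, as shown in the discussion preceding Proposition~\ref{MeasuredTreeProp}.

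For the ``$\Rightarrow$'' half of (1) I would prove the contrapositive: a point $q\in\T$ that does not represent a component of $\bdry\tilde S$ has at least two directions. Such a $q$ represents either a limit leaf $\tilde\ell$ of $\tilde{\bar L}$ accumulated on both sides, in which case efficient transversals emanating from the two sides of $\tilde\ell$ lie in two distinct components of $\T\setminus\{q\}$; or it represents a component $\tilde R$ of $\tilde S\setminus\tilde{\bar L}$ whose closure covers a complementary region of the essential lamination $\bar L$ in $S$. Because $\bar L$ is essential (no $\bdry$-parallel leaves, and no complementary piece bounded by a single interior leaf), every such region has at least two frontier sides, and transversals crossing two distinct sides of $\tilde R$ determine two distinct directions at $q$. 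In either case $q$ is not a boundary point, which proves (1).

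The step I expect to be the real obstacle is this last one in the weakly measured setting, where $\bar\nu$ need not be locally finite in the interior of $S$: one must make sure that multiplying the measure on $L_\infty$ by $\infty$, and replacing the finitely many isolated atomic leaves of the $X_j$ by product families, does not collapse an interior complementary region to a point of $\T$ with only one direction, nor otherwise manufacture a boundary point away from $\bdry\tilde S$. I would dispose of this by noting that these operations only ever subdivide or lengthen segments of the dual tree — inserting a product family subdivides an edge, and rescaling by $\infty$ replaces finite lengths by $\infty$ — so the partition of $\T\setminus\{q\}$ into directions is the same as in the dual tree of the $\reals$-measured structure on $L_\infty$ supplied by Proposition~\ref{PlanteProp}, where every interior complementary region visibly has at least two sides. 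With (1) and (2) in hand, the familiar chain ``component of $\bdry\tilde S$ $\Longleftrightarrow$ infinite point $\Longleftrightarrow$ boundary point'' falls out as a corollary.
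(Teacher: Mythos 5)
Your overall strategy coincides with the paper's: the paper's entire proof of this proposition is the remark that it uses the same ideas as Proposition \ref{MeasuredTreeProp}, and your construction of $\T$, the deck-transformation action, and the two-directions argument for (i) are exactly that argument adapted to the weakly measured case. There is, however, one step in your justification of (ii) that fails as written. Since $(\bar L,\bar\nu)$ is constructed so that \emph{every} transversal with one endpoint on $\bdry S$ has infinite measure, every point $p\ne q$ of $\T$ satisfies $d(p,q)=\infty$ when $q$ represents a component of $\bdry \tilde S$; there is no ``point of $[x,q]$ at distance $1$ from $q$'', and, more importantly, knowing that $[x,q]$ has infinite length does not produce a sub-segment isometric to $[0,\infty]$. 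In the weakly measured setting a segment can consist of points that are pairwise at infinite distance (for instance one crossing $\tilde L_\infty$), which is precisely why the converse of (ii) is dropped from the statement; so ``infinite length'' and ``isometric to $[0,\infty]$'' must not be conflated here. The isometric copy of $[0,\infty]$ ending at $q$ has to come from the structure of $\bar L$ near $\bdry S$ itself: the inserted parallel family of leaves parametrized by $[0,\infty]$ (respectively, the locally finite measure of the leaves of $L\setminus L_\infty$ spiraling to a closed boundary geodesic) supplies points at every finite mutual distance accumulating only at $q$, and the segment from a leaf of that family to $q$ is then genuinely isometric to $[0,\infty]$. The same conflation appears in your aside that the points at finite distance from a basepoint form an $\reals$-tree with the infinite points attached at its ends: in the weakly measured case a point at infinite distance from the basepoint need not be an end at all (a complementary region on the far side of $\tilde L_\infty$ has many directions); this remark is harmless for checking the $\bar\reals$-tree axioms, but it should be removed or restricted to the locally finite situation of Proposition \ref{MeasuredTreeProp}.
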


\begin{proof}  The proof uses the same ideas as the proof of Proposition \ref{MeasuredTreeProp}.
\end{proof}

The trees dual to lifts to the universal cover of arbitrary essential laminations are more general trees called ``order trees."  There is a definition in
 \cite{DGUO:EssentialLaminations}, but we give a different definition.  (I am not sure who first defined these; I first heard about  order trees from Peter Shalen.)
 
 \begin{defn}  An {\it order tree} is a set $\T$ together with a subset $[x,y]$, called a {\it segment}, associated to each pair of elements, together with a linear order on $[x,y]$ such that $x$ is the least element in $[x,y]$ and $y$ is the greatest element.  We allow {\it trivial segments} $[x,x]$.  The set of segments should satisfy the following axioms:
 
 \begin{tightenum}
 \item The segment $[y,x]$ is the segment $[x,y]$ with the opposite order.
\item The intersection of segments $[x,y]$ and $[x,z]$ is a segment $[x,w]$.  
 
\item  If two segments intersect at a single point, $[x,y]\cap[y,z]=\{y\}$ then the union is a segment  $[x,z]$.
 \end{tightenum}
 The order tree is a topological space:  $G$ is open in $\T$ if for every segment $[x,y]$, $G\cap [x,y]$ is open in the order topology for $[x,y]$.
 \end{defn}

Clearly $\reals$-trees and $\bar\reals$-trees are also order trees, as are $\Lambda$-trees, where $\Lambda$ is an ordered abelian group.

Let $\T$ be an order tree and $x\in \T$.  Let $\D_x$ be the set of directions at $x$.  Then for every direction $d$ at $x$ we have a subtree $\T_d$ called the {\it branch in the direction $d$}, the union of segments from $x$ to points $p$ in the component of $\T\setminus \{x\}$ representing the branch.   Now suppose we have another tree $R$ with $\B$ a subset of the boundary points, and suppose we have a one-one correspondence $\psi:\B\to \D_x$.  Then we can replace $x\in \T$ by $R$ to obtain a new tree $\T'=(\cup_d\T_d\cup R)/\sim$ by attaching each point $b\in \B\subset R$ to $x\in \T_d$ where $d=\psi(b)$. 

\begin{defn}  In the above construction,  we say $\T'$ is obtain from $\T$ by {\it inserting} $R$ at $x$ according to the correspondence $\psi$.
\end{defn}

Loosely speaking, to insert $R$ at $x\in \T$, we cut $\T$ at $x$, then attach the branches of $\T$ at $x$ to the boundary points of $R$ according to the correspondence $\psi$.   Note that the topological quotient $\T'/R$ is the tree $\T$.

\begin{defn}  Suppose $\T$ is an order tree and the group $G$ acts on $\T$.  If $X$ is the $G$-orbit of $x_0$ in $\T$, we will describe a simultaneous insertion of isometric trees at all points of $X$, called an {\it equivariant insertion}, which yields a new tree $\T'$ with an extended action of $G$ on $\T'$.  The inserted trees are all isomorphic to a given tree $R$, and we are given an action of the stabilizer $H$ of $x_0$ on $R$ which must satisfy the following condition.  If $h\in H$, then $h$ induces a bijection $\psi$ of directions at $x_0$, $\psi_h:\D_{x_0}\to \D_{x_0}$.  Similarly, if $h\in H$, $h$ induces a bijection of boundary points of $R$ and we assume $\B$ is an invariant subset of the boundary points with induced bijection $b\leadsto hb$, $b\in \B$.  Further we assume we are given a glueing map $\phi:\B\to \D_{x_0}$ which satisfies the condition that for every $h$, $\phi (h b)=\psi_h\phi(b)$

  The action of $G$ on $\T$ induces bijections between elements of $\{\D_x:x\in X\}$.  Namely, there is a bijection $\psi_g:\D_{x_0}\to \D_{g x_0}$ such that $\psi_{gh}=\psi_g\psi_h$.  If $R$ is a tree with $\B$ a subset of the boundary points and $\phi:\B\to \D_{x_0}$ is the bijection for inserting $R$ at $x_0$, then the equivariant insertion also inserts $R$ at other points of $X$ according to the rule that the attaching map for $R$ at $gx_0$ is $\psi_g\phi:\B\to \D_{gx_0}$.  If $H$ is the stabilizer of $x_0$ in $G$, we choose an action of $H$ on $R$ to extend the action to the $R$ inserted at $x_0$.  Then $gHg\inverse $ is the stabilizer of $gx_0$ and we choose the action on the $R$ inserted at $gx_0$ to satisfy $ghg\inverse (gr)=ghr$ for $r\in R$.

If a number of different equivariant insertions are done simultaneously on disjoint orbits to obtain $\T'$ from $\T$, then we still say that $\T'$ is obtained from $\T$ by {\it equivariant insertion}.  If $\T'$ with a $G$-action is obtained from $\T$ with a $G$-action by a finite sequence of equivariant insertions, such that later insertions can be done at points in previously inserted subtrees, then we say $\T'$ with its action is {\it finite height}.  Or we say that the action of $G$ on $\T'$ is a { \it finite height action} on the {\it finite height tree $\T'$}.  

If all the trees involved are $\bar\reals$-trees, and all the actions on the original tree and inserted trees preserve distance, then $\T'$ is a {\it finite height $\bar \reals$-tree}.  If all of inserted trees have the property that points are infinite if and only if they are boundary points, then we say $\T'$ is a  {\it finite height locally finite $\bar\reals$-tree}.
\end{defn}

\begin{defn}  Suppose $(L,\nu)$ is a geodesic finite height lamination, with $\nu$ an invariant transverse $\SB$-measure. If $L=L_0\cup L_1\cup\cdots\cup L_h$, with $L_i$ having an $\bar\reals$-measure $\nu_i$, then  we construct the {\it associated geometric finite height lamination $\bar L=\bar L_0\cup \bar L_1\cup\cdots\cup \bar L_h$}, where the $\bar L_i$'s are the weakly measured laminations constructed from $L_i$'s as described above, with associated transverse measures $\bar\nu_i$.  The transverse $\SB$-measure induced on $\bar L$ is called $\bar\nu$.  
\end{defn}


\begin{proposition} If $S$ is a surface with cusps and boundaries, and $L\embed S$ is a finite height essential lamination, then there is a finite height action of  $\pi_1(S)$ on the finite height $\bar\reals$-tree $\T$ dual to the lift $\tilde{ \bar L}$ of the associated finite height lamination $\bar L$ to the universal cover of $S$.   Similarly, if $L$ is a finite height measured lamination, there is an action of $\pi_1(S)$ on a finite height $\bar\reals$ locally finite tree $\T$ dual to $\tilde {\bar L}$.
\end{proposition}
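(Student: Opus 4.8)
The plan is to induct on the height $h$, taking $L$ to be its proximal representative $L=L_0\cup L_1\cup\cdots\cup L_h$ with every $L_j$ non-empty. For $h=0$ the associated geometric lamination is $\bar L=\bar L_0$, the geometric weakly measured lamination built from the weakly measured lamination $L_0$, and the required $\pi_1(S)$-action on the $\bar\reals$-tree $\T$ dual to $\tilde{\bar L}$ is exactly the one furnished by the preceding proposition; this $\T$ is a finite height $\bar\reals$-tree with no insertions, and it is locally finite when $L_0$ is $\reals$-measured, by the same proposition. For the inductive step I would strengthen the statement slightly so that the induction also records the identification of the boundary points of the dual tree with the components of the boundary of the universal cover of the ambient surface; this is already part of the content of the base case and of Proposition~\ref{MeasuredTreeProp} and its weakly measured analogue.

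For the inductive step, split off the densest (level $h$) piece. Writing $L$ in the form $(W_0,\ldots,W_h,\emptyset,\ldots)$ of Definition~\ref{WeakDefn}, the level $h$ lamination $L_h=W_h$ is a weakly measured lamination in $S$, and $L'=L_0\cup\cdots\cup L_{h-1}=W_0\setminus W_h$ is a finite height essential lamination of height $h-1$ in the completion $F$ of $\hat F=S\setminus L_h$ (conditions (i)--(iii) of Definition~\ref{WeakDefn} restrict to $F$, and no new $\bdry$-parallel leaves are created because of condition (i) of Definition~\ref{FDepthDef}). Apply the preceding proposition to the geometric weakly measured lamination $\bar L_h$ to obtain a $\pi_1(S)$-action on the $\bar\reals$-tree $\T_h$ dual to $\widetilde{\bar L_h}$, whose boundary (equivalently infinite) points are the components of $\bdry\tilde S$. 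The complementary regions of $\widetilde{\bar L_h}$ in $\tilde S$ are precisely the components of the preimage $\tilde F$ of $F$; they form a $\pi_1(S)$-invariant set of points of $\T_h$, one orbit per component $F_\alpha$ of $F$. Each such region $\tilde F_\beta$ is simply connected (complementary regions of essential laminations are $\pi_1$-injective) and is the universal cover of a component $F_\alpha$ of $F$, with deck group equal to its stabilizer $G_\beta\le\pi_1(S)$, and the frontier leaves of $\tilde F_\beta$ in $\tilde S$ are in natural $G_\beta$-equivariant bijection with the boundary lines of $\tilde F_\beta$ as a universal cover and with the directions $\mathscr{D}_{x_\beta}$ of $\T_h$ at the point $x_\beta$ representing $\tilde F_\beta$.

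Now apply the inductive hypothesis to $L'$ in each component $F_\alpha$ of $F$ with $\chi_g(F_\alpha)<0$ (for the remaining components $L'$ is empty and no insertion is needed): there is a finite height $\bar\reals$-tree $R_\alpha$ dual to the lift of $\bar L'$ to $\widetilde{F_\alpha}$, carrying a finite height action of $\pi_1(F_\alpha)$, locally finite if $L$ is measured, whose boundary points $\B_\alpha$ correspond to the components of $\bdry\widetilde{F_\alpha}$. Pulling back along $\tilde F_\beta\to F_\alpha$ gives, for every region $\tilde F_\beta$ over $F_\alpha$, a tree $R_\beta\cong R_\alpha$ with a finite height $G_\beta$-action and boundary set $\B=\B_\beta$. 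Build $\T$ from $\T_h$ by performing, for each component $F_\alpha$, the equivariant insertion of $R_\beta$ at $x_\beta$: the gluing map $\phi\colon\B\to\mathscr{D}_{x_\beta}$ is the composite of the identifications $\B\leftrightarrow$ boundary lines of $\tilde F_\beta$ $\leftrightarrow$ frontier leaves of $\tilde F_\beta$ $\leftrightarrow\mathscr{D}_{x_\beta}$ above; these are natural, hence $\phi$ is a bijection with $\phi(gb)=\psi_g\phi(b)$ for $g\in G_\beta$, so the hypotheses defining an equivariant insertion are met and the extended action of $\pi_1(S)$ on $\T$ is well defined. Since the $\bar\reals$-metrics of $\T_h$ (measured at level $h$) and of every $R_\beta$ are preserved by the relevant actions, $\T$ is by construction a finite height $\bar\reals$-tree, and it is finite height locally finite in the measured case because $\T_h$ and all $R_\beta$ are then locally finite. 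Finally one checks that the glued object is canonically the dual tree of $\tilde{\bar L}$: its points are the complementary regions of $\widetilde{\bar L_h}$ disjoint from $\tilde{\bar L}'$ together with the complementary regions and non-peripheral leaves of $\tilde{\bar L}'$ lying inside the various $\tilde F_\beta$, which are exactly the complementary regions and non-peripheral leaves of $\tilde{\bar L}$; and an efficient transversal joining two of these receives transverse measure $\bar\nu$ equal (in $\SB$) to the sum of its $\bar L_h$-contribution and its $\bar L'$-contribution, which is precisely how $\bar\reals$-lengths add across an insertion.

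The main obstacle is making the chain of identifications ``side of the complementary region $\tilde F_\beta$'' $=$ ``direction of $\T_h$ at $x_\beta$'' $=$ ``boundary/infinite point of $R_\beta$'' rigorous and simultaneously $G_\beta$-equivariant for every $\beta$, and then verifying that the tree obtained by these (infinitely many orbits of) equivariant insertions genuinely coincides, as an order tree with compatible $\SB$-measure on segments, with the naive dual tree of $\tilde{\bar L}$. The $\pi_1$-injectivity of complementary regions and the bookkeeping of the several orbits of insertions are standard but must be stated carefully, as must the fact (used in the base of the induction strengthening) that boundary points of the dual trees are exactly the lifts of boundary components.
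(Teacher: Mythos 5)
Your proposal is correct and takes essentially the same route as the paper: the paper also builds the $\bar\reals$-tree dual to the top-level lamination $\tilde{\bar L}_h$ first and then equivariantly inserts the trees dual to the lifts of the lower-level laminations (the complementary-region points), obtaining the finite height action, with locally finite inserted trees in the measured case. Your version merely organizes this as an induction on the total height and spells out the boundary-point/direction identifications and equivariance bookkeeping that the paper leaves as ``obvious.''
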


\begin{proof}  Let $\tilde{\bar L }$ denote the lift of $ \bar L$ to the universal cover of $S$.  We can describe the finite height lamination as $\bar L=\bar L_0\cup \bar L_1\cup\cdots \bar L_h$, where each $\bar L_i$ is weakly measured in the appropriate subsurface with measure $\bar\nu_i$. The theorem is proved by observing that efficient transversals for $\tilde{\bar  L}$  project to appropriate segments in the tree $\T$ dual to $\bar{\tilde L}$.  The tree $\T_h$ dual to $\tilde{\bar L}_h$ is the original tree.  Inductively, corresponding lifts $\tilde {\bar L}_i$, we have dual trees to be inserted equivariantly in the previous tree $\T_{i+1}$, and there is an obvious action of $\pi_1(S)$ on the final tree obtained.    If all the laminations $L_i\subset S_i$ are measured, then we can inductively equivariantly insert locally finite $\bar\reals$-trees and define a suitable action on the resulting finite height tree.  \end{proof}

A suitable converse, to produce (non-uniquely) a finite height essential measured lamination from a finite height action can probably be stated and proved, but there must be conditions relating point stabilizers to subgroups of $\pi_1((S,\alpha))$  corresponding to fundamental groups of suitable subsurface pairs.

We now present another point of view to describe actions on trees dual to finite height laminations.

\begin{defn} An $\SB$-metric on a set $X$ is a function $d:X\times X\to \SB$ satisfying the usual axioms for a metric.  An $\SB$-metric space is the set $X$ together and $\SB$-metric.
\end{defn}

\begin{defn}  Suppose $\T$ is an order tree and suppose $\nu$ is an $\SB$-measure $\nu$ on the disjoint union of segments of $\T$ with the property that  if $[x,y]$ and $[z,w]$ are segments, and $[x,y]\cap [z,w]=[u,v]$, then for any measurable set $E\subset [u,v]$, $\nu(E)$ is the same no matter which segment ($[x,y]$,$ [z,w]$, or $[u,v]$) we use to evaluate the measure.  (The measure agrees on intersections of segments.)   We say $\nu$ is an {\it $\SB$-measure on $\T$} and $\T$ with the measure  $\nu$ is called an {\it $\SB$-tree}.

The $\SB$-measure on an order tree is {\it non-atomic}  if the measure of a single point in a segment is always $0$.  It has {\it full support} if it has full support on the disjoint union of segments.  If the $\SB$-measure associated to an $\SB$-tree $\T$ has full support and is non-atomic, then we say the $\SB$-tree is a {\it metric $\SB$-tree}.
\end{defn}

\begin{lemma}  Suppose $\T$ is a metric $\SB$-tree, meaning the associated $\SB$-measure has full support and is not atomic at any point.  Then $\T$ is an $\SB$-metric space with metric $d(x,y)=\nu([x,y])\in \SB$.
\end{lemma}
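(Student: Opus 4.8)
The plan is to verify the metric axioms for the proposed function $d(x,y)=\nu([x,y])$ one at a time, against the definitions of $\SB$ and of an $\SB$-measure on an order tree. Non-negativity is free, since every value of $\nu$ lies in $\SB$ and $0$ is the least element of $\SB$. Symmetry is immediate from axiom (i): $[x,y]$ and $[y,x]$ are the same subset of $\T$, differing only by the orientation of the linear order, which changes neither the order topology on that segment nor the Borel $\SB$-measure carried by it; hence $\nu([x,y])=\nu([y,x])$. For nondegeneracy: if $x=y$ then $[x,y]=\{x\}$ is a trivial segment and non-atomicity gives $\nu(\{x\})=0$; conversely, if $x\ne y$ then $[x,y]$ is a non-trivial segment, and the full-support hypothesis, applied to this segment, forces $\nu([x,y])\ne 0$.

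The only substantial axiom is the triangle inequality, for which I would first record two elementary facts. \emph{Monotonicity}: if $E\subseteq F$ are Borel subsets of a segment, then $\nu(E)\le\nu(F)$, because $\nu(F)=\nu(E)+\nu(F\setminus E)$ with $\nu(F\setminus E)\ge 0$ and, $\SB$ being an ordered semi-ring, addition respects the order; as an immediate corollary, $\nu(E\setminus\{p\})=\nu(E)$ for a single point $p$, using additivity together with non-atomicity. \emph{Median structure of an order tree}: given $x,y,z\in\T$, axiom (ii) produces a point $m$ with $[x,y]\cap[x,z]=[x,m]$, and a routine manipulation of axioms (ii) and (iii) shows $m$ lies on all three of $[x,y]$, $[y,z]$, $[x,z]$, with $[x,y]=[x,m]\cup[m,y]$, $[y,z]=[y,m]\cup[m,z]$, and $[x,z]=[x,m]\cup[m,z]$, each of these unions meeting only in $m$.

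Granting the median decomposition, the triangle inequality follows by a short computation, in which the compatibility hypothesis on $\nu$ is what lets us speak unambiguously of $\nu([x,m])$, $\nu([m,y])$, $\nu([m,z])$ regardless of which segment they are computed in. Splitting $[x,z]=[x,m]\cup[m,z]$ along $\{m\}$ and discarding $m$ by non-atomicity gives $\nu([x,z])=\nu([x,m])+\nu([m,z])$, while $\nu([x,y])=\nu([x,m])+\nu([m,y])\ge\nu([x,m])$ and $\nu([y,z])=\nu([y,m])+\nu([m,z])\ge\nu([m,z])$ (again using $\nu(\cdot)\ge 0$ and order-compatibility of addition). Since addition in the ordered semi-ring $\SB$ is monotone in each argument,
$$d(x,z)=\nu([x,m])+\nu([m,z])\le\nu([x,y])+\nu([y,z])=d(x,y)+d(y,z).$$

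The one step that is not routine bookkeeping against the definitions is the order-tree median lemma, i.e.\ deducing from axioms (i)--(iii) that the point $m$ cut out by $[x,y]\cap[x,z]$ actually lies on $[y,z]$ and yields the three stated segment decompositions. This is classical for order trees; in the final write-up I would either prove it as a one-paragraph sublemma or cite it, and I expect it to be the main (indeed essentially the only) real obstacle, everything else being a direct appeal to the definitions of $\SB$, of Borel $\SB$-measure, and of full support and non-atomicity.
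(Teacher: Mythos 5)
Your proposal is correct and follows essentially the same route as the paper: nondegeneracy from non-atomicity and full support, and the triangle inequality via the median point $w$ with $[x,y]\cap[x,z]=[x,w]$, the resulting segment decompositions, and monotonicity of $\nu$ coming from additivity and the order on $\SB$. The median sublemma you flag is exactly the step the paper dispatches tersely by appealing to the order-tree axioms ("$[y,w]\cup[w,z]=[y,z]$ by axiom (iii)"), so your more explicit treatment of it, and of symmetry and full support, only makes the argument more complete, not different.
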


\begin{proof}  Because $\nu$ has no atomic measures on points, we conclude $d(x,y)=\nu([x,y])=0$ if and only if $x=y$.  To verify the triangle inequality, observe that if $x,y,z$ are points in the tree, by axiom (iii) for order trees, $[x,y]\cap [x,z]=[x,w]$ for some $w$, so $[y,w]\cup [w,z]=[y,z]$ by axiom (iii).  Hence $d(y,z)=\nu ([y,z])=\nu([y,w])+\nu([w,z])\le \nu([y,x])+\nu([x,z])=d(y,x)+d(x,z)$, because $[y,w]\subset [y,x]$ and $[w,z]\subset [x,z]$.
\end{proof}

\begin{defn}  Suppose $L$ is an essential  lamination in $S$   The {\it order tree dual to the lift $\tilde L$ of $L$ to the universal cover $\tilde S$ of $S$} is the set of closures of complementary regions of $\tilde L$ union non-boundary leaves.  A {\it segment} $[x,y]$ is the set elements of $\T$ intersected by closed oriented efficient transversal $T$ for $\tilde L$ with order coming from the order on the transversal.  \end{defn}

\begin{proposition}  Given an essential lamination $L$ in $S$, the object $\T$ defined above with the given segments is an order tree.
If $L$ is $\SB$-measured, with measure $\mu$, then the lifted measure $\nu=\tilde\mu$ yields an $\SB$-measure $\nu$ for $\T$, so $\T$ is and $\SB$-tree..  If $\mu$ has no leaves with atomic transverse measures, $\T$ is an $\SB$-metric space with metric $d(x,y)=\nu([x,y])$ for $x,y\in \T$.
\end{proposition}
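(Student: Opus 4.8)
The plan is to establish the three assertions in order, with almost all of the work going into the first. \textbf{That $\T$ is an order tree.} First I would check that a segment $[x,y]$ is well defined, i.e.\ that the set of elements of $\T$ met by a closed oriented efficient transversal from $x$ to $y$, together with the induced linear order, is independent of the transversal chosen. Since $\tilde S$ is simply connected and $\tilde L$ is essential, any two efficient transversals with endpoints in the leaves or complementary regions representing $x$ and $y$ are isotopic through a family of transversals whose endpoints stay in the same leaf or complementary region at all times (the standard innermost-disk argument, as in \cite{DGUO:EssentialLaminations}); such an isotopy changes neither the set of elements met nor their order. Axiom (i) is then immediate, reversing the orientation of the transversal. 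For axiom (ii), I would take efficient transversals $T,T'$ realizing $[x,y]$ and $[x,z]$; starting from $x$ they agree (after the above isotopy) along an initial subarc and then separate, the last common element being some $w$, and $[x,y]\cap[x,z]=[x,w]$ follows directly. For axiom (iii), when $[x,y]\cap[y,z]=\{y\}$ I would concatenate an efficient transversal realizing $[x,y]$ with one realizing $[y,z]$; the hypothesis $[x,y]\cap[y,z]=\{y\}$ is exactly what guarantees the concatenation meets each element of $\T$ at most once, so it straightens to an efficient transversal realizing $[x,z]=[x,y]\cup[y,z]$. Finally the topology is the one in the definition of order tree, with nothing to prove. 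I expect this first part to be the main obstacle: the uniqueness of efficient transversals in $\tilde S$ up to admissible isotopy, and the efficiency of the concatenation in axiom (iii), are the two points needing genuine care, and they are precisely where essentiality of $L$ is used.

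\textbf{That $\T$ is an $\SB$-tree when $L$ is $\SB$-measured.} Given the invariant transverse $\SB$-measure $\mu$ on $L$, I would lift it to a $\pi_1(S)$-invariant transverse $\SB$-measure $\tilde\mu$ on $\tilde L$. For each segment $[x,y]$, identify it with the image in $\T$ of a closed efficient transversal $T$ realizing it, and define $\nu$ on Borel subsets of $[x,y]$ by pulling back, via this identification, the $\SB$-measure that $\tilde\mu$ assigns to subtransversals of $T$. Invariance of $\tilde\mu$ under isotopy of transversals through transversals (endpoints staying in the same leaf or complementary region) shows $\nu$ is independent of the choice of $T$, so $\nu$ is well defined on the disjoint union of segments; the same invariance shows that if $[x,y]\cap[z,w]=[u,v]$, then the value of $\nu$ on a Borel subset $E\subset[u,v]$ is the same whether computed in $[x,y]$, $[z,w]$, or $[u,v]$. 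That $\nu$ restricted to each segment is a finite height, open-graded Borel $\SB$-measure is inherited directly from the corresponding properties of $\mu$ on transversals, Definitions \ref{LMeasureDef} and \ref{OpenGradedMeasureDef}, the uniform level bound coming from the uniform height bound for $\mu$. Hence $\T$ with $\nu$ is an $\SB$-tree; the deck action of $\pi_1(S)$ visibly preserves $\nu$.

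\textbf{That $\T$ is an $\SB$-metric space when $\mu$ has no atomic leaves.} Here it suffices to verify that $\nu$ is non-atomic and has full support on the disjoint union of segments, for then $\T$ is a metric $\SB$-tree and the preceding Lemma supplies the $\SB$-metric $d(x,y)=\nu([x,y])$. For non-atomicity: a single element $p$ of a segment is either the closure of a complementary region of $\tilde L$, which an efficient transversal meets in a subarc carrying no transverse measure, or a non-boundary leaf of $\tilde L$, whose $\nu$-mass equals the atomic transverse mass of that leaf, which is $0$ by hypothesis; either way $\nu(\{p\})=0$. For full support: if $[a,b]$ is a nontrivial subsegment then $a\ne b$ as elements of $\T$, so the corresponding subarc of an efficient transversal cannot stay in a single complementary region and therefore crosses at least one leaf of $\tilde L$, hence meets $\tilde L$ transversely; since $\mu$ has full support on $L$, this subarc has positive measure, so $\nu([a,b])>0$. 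Applying the Lemma then completes the proof.
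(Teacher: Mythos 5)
Your overall strategy matches the paper's: verify the three order-tree axioms for $\T$, lift the transverse $\SB$-measure to a measure on segments and check compatibility on intersections, then invoke the preceding lemma to get the $\SB$-metric. Your handling of axiom (i), axiom (iii), the measure step, and the non-atomicity and full-support checks is in line with the paper (and in places more careful: the paper never explicitly addresses well-definedness of segments or full support).

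The genuine gap is in axiom (ii). Granting your uniqueness-of-efficient-transversals claim, one does obtain that the common elements of $[x,y]$ and $[x,z]$ form an initial segment of both orders, with the two orders agreeing there (for any common element $w'$, the initial portions of the two transversals up to $w'$ realize the same segment $[x,w']$). What does not follow, and what you simply assert with ``the last common element being some $w$,'' is that this set of common elements has a greatest element, i.e.\ that $[x,y]\cap[x,z]$ is a closed segment $[x,w]$ rather than an increasing union of segments $[x,w']$ with no maximum. This closedness is exactly where the paper's proof does its real work: it represents the two segments by geodesic transversals $\gamma$ and $\beta$ emanating from $X$, builds a rectangular disk $\Delta$ bounded by $\gamma$, $\beta$ and two connecting geodesic arcs to show that leaves of $\tilde L\cap\Delta$ crossing one transversal must cross the other, then takes the supremum $u$ of parameters along $\gamma$ at which common elements occur and runs the rectangle argument a second time at the element $W$ containing $\gamma(u)$ to conclude that $W$ itself is met by both transversals, so the greatest element $w$ exists. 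Your proposal contains no argument for this step --- and you explicitly flag transversal uniqueness and the concatenation in (iii) as the delicate points instead --- so the one place where a substantive geometric argument (the rectangle-plus-supremum construction, or an equivalent compactness/separation argument using closedness of $\tilde L$) is indispensable is precisely the place your sketch passes over. Supplying that argument would complete the proof along essentially the paper's lines.
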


\begin{proof}  We verify the order tree axioms:  (i) is true by construction, $[y,x]$ is $[x,y]$ with the opposite order, coming from a transversal with the opposite orientation.  

For (ii) we must show that $[x,y]\cap [x,z]=[x,w]$ for some $w\in \T$.  Consider oriented geodesic segments $\gamma$ from a point in $X$ to a point in $Y$, and $\beta$ from a point in $X$ to a point in $Z$.   We make the convention that the point in the tree corresponding to a leaf or complementary component $X$ will be called $x$, changing to lower case.   Choose a $v>x$, $v\in [x,y]$ such that both $\gamma$ and $\beta$ intersect the corresponding $V$.  Choose a geodesic segment $\omega$ in $V$ joining a point in $\gamma\cap V$ to a point in $\beta\cap V$, and similarly choose a geodesic segment $\rho$ joining a point $\gamma\cap X$ to a point in $\beta\cap X$.  Without loss of generality, $\rho$ and $\omega$ intersect $\beta$ and $\gamma$ at endpoints of  $\beta$ and $\gamma$.  Consider the rectangular disk $\Delta$ bounded by $\gamma$, $\beta$, $\rho$ and $\omega$.  Geodesic segments of $\tilde L\cap \Delta$ do not intersect $\omega$ or $\rho$, therefore every such geodesic which intersects $\gamma$ must also intersect $\beta$, and vice versa.  This proves that $[x,v]$ viewed as a subset of $[x,y]$ is the same as  $[x,v]$ viewed as a subset of $[x,z]$.  We must now show there is a maximal $v$ of this kind.  We parametrize $\gamma:[0,1]\to \tilde S$, and let $u=\sup\{t:\gamma(t)\in V, \text{ such that also } v\in[x,z]\}$ and let $W$ contain $\gamma(u)$, with $w$ the corresponding point in the tree.  Now build a rectangle as before, with $W$ instead of $V$: Choose a geodesic segment $\omega$ in $W$ joining a point in $\gamma\cap W$ to a point in $\beta\cap W$, and similarly choose a geodesic segment $\rho$ joining a point $\gamma\cap X$ to a point in $\beta\cap X$.  Without loss of generality, $\rho$ and $\omega$ intersect $\beta$ and $\gamma$ at endpoints of  $\beta$ and $\gamma$.  Consider the rectangular disk $\Delta$ bounded by $\gamma$, $\beta$, $\rho$ and $\omega$.  Again geodesic segments of $\tilde L\cap \Delta$ do not intersect $\omega$ or $\rho$, and by what we have proved already, every such segment in $\Delta\setminus W$ joins a point in $\beta$ to a point in $\gamma$.  The same is then also true for a segment of $\bdry W\cap \Delta$, which shows that $[x,y]\cap [x,z]=[x,w]$.

For property (iii), suppose $[x,y]$ and $[y,z]$ are (non-trivial) segments in $\T$ with $[x,y]\cap [y,z]=\{y\}$.    Representing $[x,y]$ by an oriented geodesic segment $\beta$ and $[yz]$ by an oriented geodesic segment $\gamma$, $\beta\cup \gamma$ must be an embedded path, otherwise the two segments would coincide on a non-trivial subsegment, a contradiction.   It follows that $\beta\cup \gamma$ can be regarded as a transversal, representing $[x,z]$.  

Now that we know that $\T$ is an order tree, it is easy to show it is an $\SB$-tree.  The transverse $\SB$-measure $\mu$ for  $L$ yields a transverse measure $\tilde \mu$ for $\tilde L$, which in turn gives a measure on transversals.  Since transversals are identified with segments of $\T$, we have an $\SB$-measure $\nu$ on the transversals.  Invariance of the measure $\tilde \mu$ gives an $\SB$-measure $\nu$ on the disjoint union of segments of $\T$.  If there are no leaves of $L$ with atomic measure, there are no points with atomic measure in (the segments of) $\T$, which shows that $d(x,y)=\nu([x,y])$ defines an $\SB$-metric on $\T$.  
\end{proof}

To obtain a finite height action on a finite height tree according to our previous definition, we must put further conditions on the $\SB$-measure to ensure that the lamination at each level is geometric.  Clearly, if $L$ is a finite height lamination, and we make modifications to obtain the geometric $(\bar L,\bar \nu)$, then the dual tree will be a metric space, so Theorem \ref{TreeThm} follows from the above proposition.

\bibliographystyle{amsplain}
\bibliography{ReferencesUO3}
\end{document}